\theoremstyle{plain}
\newtheorem{lemma}{Lemma}[section]
\newtheorem{theorem}[lemma]{Theorem}
\newtheorem{corollary}[lemma]{Corollary}
\newtheorem{claim}{Claim}
\newtheorem*{stat}{\name}
\newcommand{\name}{testing}
\theoremstyle{definition}
\newtheorem{definition}[lemma]{Definition}
\newtheorem{problem}{Problem}
\theoremstyle{remark}
\newtheorem*{remark*}{Remark}
\newcommand{\qedc}{{\qed}~{\rm Claim~{\theclaim}.}}
\newcommand{\qedsc}{{\qed}~{\rm Claim.}}
\newenvironment{cproof}
{\begin{proof}[Proof of Claim.]}
{\qedc\renewcommand{\qed}{}\end{proof}}
\numberwithin{equation}{section}
\newcommand{\pup}[1]{\textup{(}{#1}\textup{)}}
\newcommand{\set}[1]{\{#1\}}
\newcommand{\setm}[2]{\set{#1\mid#2}}
\newcommand{\famm}[2]{(#1\mid#2)}
\newcommand{\gen}[1]{\langle{#1}\rangle}
\newcommand{\card}[1]{|{#1}|}
\newcommand{\sC}{\mathscr{C}}
\newcommand{\sL}{\mathscr{L}}
\newcommand{\bA}{\boldsymbol{A}}
\newcommand{\bB}{\boldsymbol{B}}
\newcommand{\bC}{\boldsymbol{C}}
\newcommand{\bD}{\boldsymbol{D}}
\newcommand{\bE}{\boldsymbol{E}}
\newcommand{\bF}{\boldsymbol{F}}
\newcommand{\bG}{\boldsymbol{G}}
\newcommand{\bM}{\boldsymbol{M}}
\newcommand{\bP}{\boldsymbol{P}}
\newcommand{\bQ}{\boldsymbol{Q}}
\newcommand{\bR}{\boldsymbol{R}}
\newcommand{\bS}{\boldsymbol{S}}
\newcommand{\cC}{\mathcal{C}}
\newcommand{\cF}{\mathcal{F}}
\newcommand{\cH}{\mathcal{H}}
\newcommand{\cL}{\mathcal{L}}
\newcommand{\cP}{\mathcal{P}}
\newcommand{\cV}{\mathcal{V}}
\newcommand{\cW}{\mathcal{W}}
\DeclareMathOperator{\Var}{{\bf{Var}}}
\DeclareMathOperator{\QVar}{{\bf{QVar}}}
\DeclareMathOperator{\Hom}{Hom}
\DeclareMathOperator{\dom}{dom}
\DeclareMathOperator{\Sub}{Sub}
\newcommand{\res}{\mathbin{\restriction}}
\newcommand{\undertilde}{\underset{\sim}}
\newcommand{\ignorer}[1]{}
\subjclass[2010]{
08C20, 
08B10 
}
\keywords{natural duality, strong duality, dualizable algebra, Abelian algebra}
\begin{document}

\date{\today}

\title{Finite Abelian algebras are fully dualizable}

\author[W.~Bentz]{Wolfram Bentz}
\author[P.~Gillibert]{Pierre Gillibert}
\author[L.~Sequeira]{Lu\'is Sequeira}

\address[W. Bentz]{Center of Algebra/Center for Computational and Stochastic Mathematics, Universidade de Lisboa, Lisbon, Portugal}
\email[W. Bentz]{wfbentz@fc.ul.pt}
\address[P. Gillibert]{Pontificia Universidad  Cat\'olica de Valpara\'iso, Instituto de Matem\'aticas, Valpara\'iso, Chile}
\email[P. Gillibert]{pierre.gillibert@ucv.cl, pgillibert@yahoo.fr}
\address[L. Sequeira]{Departamento de Matem\'atica, Faculdade de Ci\^encias da Universidade de Lisboa, Lisbon, Portugal}
\email[L. Sequeira]{lfsequeira@fc.ul.pt}

\begin{abstract}
We show that every  finite Abelian algebra $\bA$ from congruence-permutable varieties admits a full duality. In the process, we prove that $\bA$ also allows a strong duality, and that the duality may be induced by a dualizing structure $\undertilde{A}$ of finite type. We  give an explicit bound on the arities of the partial and total operations appearing in $\undertilde{A}$. In addition, we show that the enriched partial hom-clone of $\bA$ is finitely generated
as a clone.
\end{abstract}

\maketitle

\section{Introduction}
A full duality represents elements of abstract algebraic structures by using functions on a topological space that is often enriched with a relational and/or operational structure, and vice versa. This representation allows us to solve algebraic questions by the way of the additional structure.  For example in Stone duality, Boolean algebras are dual to Boolean spaces.
Under this correspondence, the familiar Cantor space is dual to the denumerable free Boolean algebra, with many of the universal properties of the Cantor space being dual counterparts to the natural universal properties of being a free algebra (the universal mapping property for example).

In a natural full duality, the
representation is constructed in a certain systematic way, using a generating algebra $\bA$ and
a corresponding topological structure $\undertilde{\bA}$, called an ``alter ego" of $\bA$. We say that $\bA$  is fully dualizable, if there exists an alter ego $\undertilde{\bA}$
such that every algebra
from the quasi-variety generated by $\bA$ and every topological structure from
the topological quasivariety generated by $\undertilde{\bA}$ has a representation.  We remark that
in case of a full duality, the correspondence can be extended to homomorphisms and continuous structure
preserving maps, yielding a category-theoretic dual equivalence between the corresponding categories.

A full duality is the symmetrized concept of a duality. The definitions of duality and dualizability differ from that of full duality and full dualizability
by requiring that only the algebras in the quasivariety generated by $\bA$ have duals,
while the topological quasivariety generated by $\undertilde{\bA}$ might contain structures without a representation.

Despite a growing understanding of duality theory, dualizability and full dualizability of an algebra continue
 to be mysterious properties. For some classes of algebras (such as algebras generating
congruence-distributive varieties) there exists a well-behaved dividing line between the dualizable and non-dualizable algebras. In other cases, the partial results available seem to defy any discernable pattern. This latter case includes classes of algebras that are otherwise considered to be well-understood, such as Malcev algebras (or even extensions of groups).

Abelian algebras are among the most well-behaved classes of algebras, being polynomially equivalent to modules. Surprisingly, they have until very recently resisted any attempts to obtain results concerning their dualizability. At the 4th Novi Sad Algebra Conference in 2013, Kearnes and Szendrei
presented a proof that showed the dualizability of all finite modules \cite{KS13}. In an unpublished result,
Bentz and Peter Mayr extended their argument
to finite modules with all constants, which is equivalent to showing dualizability for all finite
abelian algebras with all constants. Finally, Gillibert proved the dualizability of
all finite Abelian algebras \cite{Gil14}, answering a question from \cite{BM13}. The same result was independently shown by Kearnes and Szendrei \cite{KSpre}.

In this article, we will complete the remaining dualizability question for abelian algebras by showing the following Theorem.
\begin{theorem}\label{T:main}Let $\bA$ be an Abelian algebra generating a congruence-modular variety. Then $\bA$ is fully dualizable.
\end{theorem}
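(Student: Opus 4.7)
The plan is to upgrade the known dualizability of finite Abelian algebras (due to Gillibert \cite{Gil14}, and independently Kearnes--Szendrei \cite{KSpre}) to full dualizability, by establishing a \emph{strong} duality and then invoking the standard implication that strong dualizability implies full dualizability (Clark--Davey--Haviland--Priestley--Pitkethly). As a first step I would reduce to the affine setting: by Herrmann's theorem, a finite Abelian algebra $\bA$ in a congruence-modular variety is affine, hence polynomially equivalent to a module over a finite ring $R$, and in particular $\bA$ lies in a congruence-permutable variety, matching the hypothesis used in the abstract. Adjoining all constants of $\bA$ does not affect full dualizability, so the question effectively reduces to finite modules with constants, where closed substructures of $\bA^n$ behave like submodules and their morphisms are partial $R$-linear maps.

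The main step is to construct an alter ego $\undertilde{\bA}$ that \emph{strongly} dualizes $\bA$. Starting from a dualizing alter ego $\undertilde{\bA}_0$ provided by Gillibert, I would enrich it with finitely many partial operations, chosen so that the resulting structure becomes injective in the topological quasivariety it generates. Concretely, I would identify a finite list of partial operations that forces every continuous morphism from a closed substructure $X \subseteq \undertilde{\bA}^n$ into $\undertilde{\bA}$ to extend to all of $\undertilde{\bA}^n$ (the interpolation condition IC). By a uniform choice of $n$ one simultaneously obtains the explicit arity bound promised in the abstract, with arities controlled by a function of $|A|$ and the depth of the submodule lattice of a suitable power $A^n$.

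The main obstacle is verifying IC, i.e.\ showing that partial $R$-linear maps defined on submodules of $A^n$ always extend---after the enrichment---to $R$-linear maps on $A^n$ in a manner compatible with the topology. I would tackle this by combining the Loewy decomposition of the finite module $\bA$ with partial operations that encode ``local'' splittings of short exact sequences of submodules, thereby reducing the extension problem to a bounded case analysis over a finite list of isomorphism types of submodules of $A^n$ for some sufficiently large $n$. Once injectivity of $\undertilde{\bA}$ has been established, full dualizability follows from the duality-strong-full implications, and the finite generation of the enriched partial hom-clone drops out of the finiteness of the list of partial operations exhibited in the construction.
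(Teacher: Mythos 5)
Your overall roadmap matches the paper's at the top level --- reduce to the affine/module setting, prove \emph{strong} dualizability on top of Gillibert's known dualizability, then invoke the standard implication that strong dualizability implies full dualizability --- and your reduction to modules with (one or more) constants is the same basic move the paper makes via Lemmas~\ref{L:AjouterConstantes} and~\ref{L:SeparerConstantes}. But the route you take to strong duality diverges from the paper's in a way that leaves the hard step unaddressed.

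The paper does \emph{not} verify IC directly and does not touch injectivity of the alter ego. Instead it appeals to the Lampe--McNulty--Willard criterion (Theorem~\ref{T:enoughTAO}): a finite dualizable algebra that has ``enough total algebraic operations'' is strongly dualizable. Verifying that criterion is a statement purely on the algebra side: given $\bB \le \bC \le \bA^n$ and a partial homomorphism $h\colon \bB \to \bA$ that extends to $\bC$, one must find a \emph{bounded} number of total operations $\bA^n \to \bA$ through whose joint kernel $h$ factors. The paper proves this (Lemma~\ref{L:AATAO}) from Theorem~\ref{T:FactorisationPetitePuissance}, which says every partial homomorphism $\bC \to \bE$ on a subalgebra of $\bA^n$ factors through a fixed power $\bA^\ell$ with $\ell$ independent of $n$. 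That theorem in turn hinges on Lemma~\ref{L:ExtensionsMorphismesAffine}: a partial homomorphism with no proper extension has a domain whose quotient is generated by a bounded number of elements, hence is ``large.'' This extension lemma, proved for modules by a maximality argument over the preimage submodules $\varphi_{x_i}^{-1}(B) \le \bF \cong \bR$ plus a pigeonhole step (Lemma~\ref{L:famillegeneratrice}), is the genuine technical heart of the paper, and nothing in your proposal plays that role.

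Your sketch substitutes for this the assertion that one can, via Loewy decomposition and ``local splittings of short exact sequences,'' show partial $R$-linear maps on submodules of $A^n$ ``always extend.'' That is not what is needed (nor what is true in general --- partial homomorphisms into $\bA$ from proper submodules of $\bA^n$ need not extend to $\bA^n$); what one needs is a \emph{factorization through a bounded power} result, and a control on \emph{when} extension fails. Moreover, IC is a condition about morphisms between closed substructures in the dual category $\undertilde{\bA}^n$, not about $R$-linear maps on $A^n$, so your phrasing conflates the two sides. A direct IC verification is considerably harder than the LMW criterion and is precisely what that criterion is designed to let one avoid. Finally, the assertion ``adjoining all constants of $\bA$ does not affect full dualizability'' is not used by the paper (which adjoins a single constant, proves extension results in the expanded module language, then carefully translates back via Lemma~\ref{L:SeparerConstantes}), and would itself require a nontrivial argument.

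So: right skeleton, but the key lemma is missing. To make your approach go through you would still need an analogue of Lemma~\ref{L:ExtensionsMorphismesAffine} / Theorem~\ref{T:FactorisationPetitePuissance}, and it is not clear that Loewy filtrations alone give it; the paper's counting argument over the submodule lattice of the free rank-one module $\bF \cong \bR$ is what actually produces the bound.
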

 In fact we will show slightly more. Firstly,
we show that a full duality can be obtained by an alter ego $\undertilde{\bA}$
 of finite type, and we give an explicit bound
on the arity of the (partial) functions and relations in $\undertilde{\bA}$. Secondly, we establish
full dualizability by showing that every finite Abelian algebra satisfies the stronger property of
(adequately named) strong dualizability.

Additionally, we obtain a structural result in clone theory by showing that the clone of all partial functions compatible with an Abelian algebra $\bA$ is finitely generated as a clone (Corollary~\ref{C:ClonePartielFinimentEngendre}).

The proof of our main theorem relies on a technical condition from \cite{LMW} (Theorem \ref{T:enoughTAO}), that requires us to find a suitable factorization for each partial $\bA$-compatible function through a bounded set of
partial functions.
Our article is structured around this requirement as follows: In Section \ref{S:basics} we define basic terms and establish several results about Abelian algebras. Section \ref{S:factoring} provides a technical result about the factorization
of projections on partial domains in the quasivariety generated by $\bA$. This result will allow us to concentrate our further considerations on partial homomorphisms without proper extensions.
In Section \ref{S:Extensions}, we prove a crucial theorem about those partial homomorphisms: namely, a partial homomorphism that cannot be extended must have a large domain. This
result is then used in Section \ref{S:factorPartial} to prove a factorization property  for all partial homomorphisms, and to prove our main theorem.

Section \ref{S:Examples} and Section \ref{S:Problems} contain an example calculation and a list of problems motivated by our research.
Moreover, we have included an appendix that gives explicit bounds on the number of various algebraic objects. While the results of the appendix are used in our arguments, they are only necessary in establishing an explicit bound
on the arities used in a fully dualizing alter ego. A reader without an interest in such an explicit bound may ignore the appendix and instead  check the simple fact that all quantities in our argument are finite.

For simplicity,  we will not provide definitions of the various types of dualities in
this article, relying on established technical results to prove our claims.
For definitions and a general background on duality theory, we refer to the standard work
\cite{CD98}.

\section{Basic concepts}\label{S:basics}

Given an algebra $\bA$, we denote by $A$ its underlying set, and by  $\Sub(\bA)$ the set of subalgebras of $\bA$. The variety $\Var(\bA)$ (respectively, the quasivariety $\QVar(\bA)$)
generated by $\bA$ is the smallest classes of algebras, with the signature of  $\bA$, that is closed under taking products, subalgebras, and homomorphic images (respectively, products, subalgebras,
and isomorphic algebras).

Given $X \subseteq A$ we denote by $\gen{X}$ the subalgebra of $\bA$ generated by $X$. For an arbitrary set $X$
and a variety $\cV$, we denote by $F_\cV(X)$ the algebra freely generated by $X$ in $\cV$.

A subproduct algebra $\bA \le \Pi_i \bA_i$ is called a subdirect product if $\pi_i(A) =A_i$ for each projection $\pi_i$.
An algebra is \emph{subdirectly irreducible} if whenever it is isomorphic to a subdirect product, it is already isomorphic to one of its factors.

An algebra $\bA$ is \emph{affine} if there exists an Abelian group structure $\langle A; +, 0, -\rangle$ such that $t(x,y,z)=x-y+z$ is both
a term function of $\bA$ and a homomorphism from $\bA^3$ to $\bA$. A class of algebras $\cC$ is \emph{affine} if all of its algebras are. In the case of an affine variety $\cV$, it is easy to see that we may choose one term $t$ that witnesses the affinity simultaneously for all members of $\cV$ (e.g. we could take the term witnessing the affinity of $F_{\cV}(\omega)$).

An algebra $\bA$ is \emph{Abelian} if $[1_A,1_A]=0_A$, where
$1_A$ and $0_A$ are the universal and trivial relations on $A$, and $[\cdot, \cdot]$ denotes the binary commutator on the
congruences of $\bA$ (we refer to \cite{FM} for the definition of the commutator). A class of algebras is Abelian if all of its members are.
As usual, when dealing with commutator theoretic
conditions, we restrict to algebras that generate congruence-modular varieties. With this condition, Abelian algebras and varieties coincide
with affine algebras and varieties \cite[Corollary 5.9]{FM}, and we will use the two notations interchangeably throughout the paper. Our results will rely exclusively on the defining property of affine algebras.

We repeat several results about congruences of  Abelian algebras from \cite{Gil14}.

\begin{definition}[\cite{Gil14}, Definition 3.1] Let $\bA$ be an Abelian algebra and $\bB \in \Sub(\bA)$. The \textbf{\emph{congruence generated}} by $B$, denoted by $\Theta_B$ is the smallest congruence of $\bA$ containing $B^2$.
\end{definition}

We remark that not every congruence of $\bA$ can be written in the form $\Theta_{\bB}$ for some subalgebra $\bB$, and that we might have $\Theta_{\bB} = \Theta_{\bC}$ with $\bB \ne \bC$.

\begin{lemma}[\cite{Gil14}, Lemma 3.3]\label{L:ThetaDiscr}
Let $\bA$ be an Abelian algebra, let $\bB \in \Sub(\bA)$, and let $t$ be a term witnessing the affinity of $\bA$. Then
$$\Theta_B=\setm{(x,y) \in A^2}{\exists b \in B, t(x,y,b) \in B}\,.$$
\end{lemma}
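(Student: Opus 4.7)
Let $R$ denote the set on the right-hand side of the claimed identity, and write the Mal'cev term as $t(x,y,z) = x-y+z$ (using the affine group structure guaranteed by Abelianness). The plan is to prove $R = \Theta_B$ by showing separately that $R$ is a congruence containing $B^2$ and that $R \subseteq \Theta_B$.

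First I would verify that $R$ contains $B^2$: for $x,y \in B$, take $b = y$, so that $t(x,y,y) = x \in B$. Next I would check that $R$ is an equivalence relation. Reflexivity is immediate since $t(x,x,b) = b \in B$ for any $b \in B$. For symmetry, if $c := t(x,y,b) \in B$, then $t(y,x,c) = y-x+(x-y+b) = b \in B$, so $c$ witnesses $(y,x) \in R$. For transitivity, given $(x,y),(y,z) \in R$ with witnesses $c_1 := t(x,y,b_1) \in B$ and $c_2 := t(y,z,b_2) \in B$, the key computation is
\[
t(c_1, b_1, c_2) = (x-y+b_1) - b_1 + (y-z+b_2) = x-z+b_2 = t(x,z,b_2),
\]
and the left-hand side lies in $B$ because $t$ is a term and $B$ is a subalgebra, so $b_2$ witnesses $(x,z) \in R$.

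To show compatibility with an $n$-ary fundamental operation $f$, suppose $(x_i,y_i) \in R$ with witnesses $t(x_i,y_i,b_i) \in B$. Because $t$ is a homomorphism $\bA^3 \to \bA$, we have
\[
t\bigl(f(\bar x), f(\bar y), f(\bar b)\bigr) = f\bigl(t(x_1,y_1,b_1), \ldots, t(x_n,y_n,b_n)\bigr) \in B,
\]
and $f(\bar b) \in B$ since $B$ is a subalgebra, showing that $(f(\bar x), f(\bar y)) \in R$. Thus $R$ is a congruence containing $B^2$, whence $\Theta_B \subseteq R$.

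For the reverse inclusion, given $(x,y) \in R$ with witness $c := t(x,y,b) \in B$, I observe that $t(c,b,y) = (x-y+b) - b + y = x$. Since $(c,b) \in B^2 \subseteq \Theta_B$ and $\Theta_B$ is a congruence, applying $t(\cdot,\cdot,y)$ yields
\[
x = t(c,b,y) \;\equiv\; t(b,b,y) = y \pmod{\Theta_B},
\]
so $(x,y) \in \Theta_B$. The main thing to get right is the transitivity computation and the reverse-inclusion step, both of which amount to cleverly substituting elements of $B$ into $t$ to exploit the Mal'cev identities; no step should present a serious obstacle beyond choosing the right arguments for $t$.
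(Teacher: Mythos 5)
Your proof is correct and complete. A small note for context: the paper you are proving from does not itself give a proof of this lemma; it is stated with a citation to [Gil14], so there is no internal argument to compare against. What you have written is the standard argument and it holds up under scrutiny. The key points are handled properly: (i) you only ever conclude membership in $B$ from the fact that $B$ is closed under the \emph{term} $t$ and under fundamental operations --- you never implicitly assume $B$ is a subgroup of the ambient Abelian group, which it need not be; (ii) transitivity is correctly obtained by observing $t(c_1,b_1,c_2) = t(x,z,b_2)$ with the left side in $B$ because $c_1,b_1,c_2\in B$; (iii) compatibility with fundamental operations uses exactly that $t$ is a homomorphism $\bA^3\to\bA$; (iv) the reverse inclusion uses compatibility of $\Theta_B$ with $t$ and the pair $(c,b)\in B^2\subseteq\Theta_B$ to produce $(x,y)\in\Theta_B$. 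One could phrase the reverse inclusion even more briefly by noting $x = t(c,b,y) \mathrel{\Theta_B} t(b,b,y) = y$, which is exactly what you did. Nothing is missing; the ``existence of at least one $b\in B$'' in the reflexivity step is fine since $\bB\in\Sub(\bA)$ is a subalgebra, hence $B\neq\varnothing$.
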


Note that this result implies that $B$ is a congruence class of $\Theta_{\bB}$.

\begin{lemma}[\cite{Gil14}, Corollary 3.7]\label{L:MeetIrred}
Let $\bA$ be an Abelian algebra and let $\bB \in \Sub(\bA)$ such that $B$ is meet irreducible in the semilattice $\langle \Sub(\bA); \cap\rangle$. Then $\bA / \Theta_B$ is subdirectly irreducible.\end{lemma}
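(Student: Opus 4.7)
The plan is to argue by contrapositive: if $\bA/\Theta_B$ fails to be subdirectly irreducible, I will produce two subalgebras strictly larger than $B$ whose intersection equals $B$. In the finite setting of this paper, failure of subdirect irreducibility yields two distinct atoms of $\mathrm{Con}(\bA/\Theta_B)$; pulling them back to $\bA$ gives congruences $\alpha,\beta \in \mathrm{Con}(\bA)$ with $\alpha,\beta \supsetneq \Theta_B$ and $\alpha \cap \beta = \Theta_B$. Fix any $b_0 \in B$ and set $B_\alpha := [b_0]_\alpha$ and $B_\beta := [b_0]_\beta$.

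The heart of the argument is to show that $B_\alpha$ (and symmetrically $B_\beta$) is a subalgebra of $\bA$ that strictly contains $B$. First, since $B$ is a $\Theta_B$-class (by the remark following Lemma~\ref{L:ThetaDiscr}) and $\Theta_B \subseteq \alpha$, the entire subalgebra $B$ is contained in the $\alpha$-class $B_\alpha$. To check closure under a basic $n$-ary operation $f$, take $x_1,\dots,x_n \in B_\alpha$; then $f(x_1,\dots,x_n) \mathrel{\alpha} f(b_0,\dots,b_0)$, and the right-hand side lies in $B \subseteq B_\alpha$ because $B$ is a subalgebra, so $f(x_1,\dots,x_n) \in B_\alpha$. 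Strict containment $B \subsetneq B_\alpha$ is forced by the affine structure: the affinity term $t(x,y,z) = x-y+z$ provides explicit bijections between any two classes of a congruence via $x \mapsto t(x,a,a')$, so all classes of a given congruence have the same cardinality. Hence $\alpha \supsetneq \Theta_B$ implies every $\alpha$-class strictly enlarges its corresponding $\Theta_B$-class, in particular $B_\alpha \supsetneq B$.

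The computation $B_\alpha \cap B_\beta = [b_0]_{\alpha \cap \beta} = [b_0]_{\Theta_B} = B$ then produces the desired proper decomposition and contradicts meet-irreducibility of $B$ in $\Sub(\bA)$. The one point that requires care is the verification that $B_\alpha$ is actually a subalgebra and not merely a coset-like set --- a property that would fail for arbitrary algebras, e.g., cosets of a normal subgroup are generally not subgroups --- but which works here precisely because $B$ sits inside $B_\alpha$ and absorbs the ``constant value'' $f(b_0,\dots,b_0)$ of each basic operation evaluated on $b_0$.
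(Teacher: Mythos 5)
Your contrapositive argument is correct and is essentially the standard Galois correspondence between subalgebras of $\bA$ containing $B$ and congruences of $\bA$ above $\Theta_B$ (equivalently, congruences of $\bA/\Theta_B$) that underlies this result. The closure check for $B_\alpha$ and the identity $B_\alpha\cap B_\beta = [b_0]_{\alpha\cap\beta}=[b_0]_{\Theta_B}=B$ are exactly right, and the fact that ``two nonzero congruences meet in $0$'' can be upgraded to atoms is harmless though not needed.

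One place where your argument leans on finiteness unnecessarily is the strictness step $B \subsetneq B_\alpha$: the equal-cardinality-of-classes argument only works when $A$ is finite (in the infinite case, a proper union of classes can still have the same cardinality). A cleaner route, which works in general and ties directly into Lemma~\ref{L:ThetaDiscr}, is: pick $(x,y)\in\alpha\setminus\Theta_B$ and set $z:=t(x,y,b_0)$. Then $z\mathrel{\alpha} t(y,y,b_0)=b_0$, so $z\in B_\alpha$; but if $z\in B$, then $t(x,y,b_0)\in B$ with $b_0\in B$, which by Lemma~\ref{L:ThetaDiscr} forces $(x,y)\in\Theta_B$, a contradiction. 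Hence $z\in B_\alpha\setminus B$. (You also tacitly use that $\bA/\Theta_B$ is nontrivial, i.e.\ $B\ne A$, to find nonzero congruences; the top element of $\Sub(\bA)$ is vacuously meet-irreducible but yields a trivial quotient, so the hypothesis is understood to exclude it.)
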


We will next establish several results about varieties of Abelian algebras and their relationship with varieties of modules. We remark that our result can be obtained from more general results in commutator theory, see in particular \cite[Chapter 9]{FM}. As the bound estimates in our results require explicit descriptions of the constructions involved in the proofs of these auxiliary lemmas, we have decided to provide self-contained arguments.

It is well known that any Abelian variety is polynomially equivalent to a variety of modules. The following lemma gives a corresponding result, stating that a variety of
Abelian algebras with at least one nullary symbol is term-equivalent to a variety of modules extended with constants. Here and throughout, we will always consider all rings to be rings with unit.

\begin{lemma}\label{L:SeparerConstantes}
Let $\cV$ be a variety of Abelian algebras \pup{over a language $\sL$}. Assume that $c\in\sL$ is a nullary  operation symbol. Then there exists a language $\sL'\cup\sC$
 and an interpretation of $\sL'\cup\sC$ on the underlying sets of algebras in $\cV$ with the properties listed below. Here, for $\bA \in \cV$, $\bA'$ denotes the algebra obtained from $\bA$ by considering the interpretation of $\sL' \cup \sC$, and $\cV'$ will denote the collection of these algebras.
\begin{enumerate}
\item $\sC$ contains only nullary symbols.
\item For every operation symbol $f \in \sL'\cup\sC$ there exists a term $s_f$ over $\sL$, such that
 $f^{\bA'}=s_f^{\bA}$ for all $\bA \in \cV$.
\item For every  $ f\in \sL$ there exists a term $s_f$  over $\sL'\cup\sC$, such that  $f^{\bA}=s_f^{\bA'}$ for all $\bA \in \cV$.
\item $\sL'$ may be considered as a module language over a suitable ring $\bR$.
\item The reduct of $\cV'$ to $\sL'$ is a variety of $\bR$-modules.
\item For every $\bA \in \cV$, $c^{\bA}$ denotes the neutral element of the module $\bA'$.
\end{enumerate}
\end{lemma}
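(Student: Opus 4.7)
The plan is to use the affine structure of $\cV$ together with the distinguished nullary symbol $c$ to rigidly identify a zero, and then read off the module structure and the necessary constants from the basic operations of $\sL$. Let $t$ be a term in $\sL$ that witnesses the affinity of $\cV$ simultaneously on all members (this is possible, as noted just before the definition of Abelian algebras). I will build $\sL'$ and $\sC$ as follows. In $\sL'$ I include a binary symbol $+$, a unary symbol $-$, a constant $0$, and a unary symbol $\lambda_r$ for every element $r$ of a ring $\bR$ to be constructed. In $\sC$ I place a fresh nullary symbol $c_f$ for every operation symbol $f\in\sL$ (with the convention that $c_c = c$ when $f$ is already nullary).

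The first step is to define $+^{\bA'}(x,y):=t^{\bA}(x,c^{\bA},y)$, $-^{\bA'}(x):=t^{\bA}(c^{\bA},x,c^{\bA})$, $0^{\bA'}:=c^{\bA}$, and $c_f^{\bA'}:=f^{\bA}(c^{\bA},\dots,c^{\bA})$. Standard Mal'cev calculations (using that $t$ is a Mal'cev term and a homomorphism $\bA^3\to\bA$) show that $\langle A;+,-,0\rangle$ is an abelian group and that $t^{\bA}(x,y,z)=x-y+z$. The second step is to observe that, for each $n$-ary $f\in\sL$, the map
\[
g_i^f(x):=t^{\bA}\bigl(f^{\bA}(c,\dots,c,x,c,\dots,c),\,f^{\bA}(c,\dots,c),\,c\bigr),
\]
with $x$ in position $i$, is an endomorphism of the abelian group reduct (apply $f$'s preservation of $t$ to the tuple with $x,0,y$ in position $i$ and $0,0,0$ elsewhere), and that
\[
f^{\bA}(x_1,\dots,x_n)=g_1^f(x_1)+\cdots+g_n^f(x_n)+c_f^{\bA'}.
\]
This is proved by iterating the identity $f(\bar x)=f(\bar x\res_{j\to 0})+f(\bar 0\res_{j\to x_j})-f(\bar 0)$, which again comes from $f$ preserving $t$.

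The third step is to define the ring $\bR$. Let $\bR$ be the subring of the endomorphism ring of the abelian group reduct of $F_{\cV}(\omega)$ generated, under composition and pointwise addition, by $\id$ and all the $g_i^f$ ($f\in\sL$, $1\le i\le \mathrm{ar}(f)$). Every element $r\in\bR$ is then a term operation in $\sL$ (unary, with parameters $c$), so the unary operations $\lambda_r^{\bA}$ can be interpreted uniformly on every $\bA\in\cV$, giving condition (2). Condition (3) follows from the displayed identity expressing each $f$ via $+$, the $\lambda_{g_i^f}$, and $c_f$. Conditions (1), (4), (6) are direct from the construction. For condition (5), one must check that the module axioms (associativity, commutativity, the distributivities, and $\lambda_1=\id$) hold in every $\bA'$; this is routine since these identities hold in $F_{\cV}(\omega)$ and $\cV=\QVar(F_{\cV}(\omega))$ (or any other generating free algebra), so they hold in every member of $\cV$.

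The main subtlety, rather than a deep obstacle, is insisting that the ring $\bR$ is chosen once and for all and that the interpretations $\lambda_r^{\bA}$ are independent of $\bA$. This is handled by taking $\bR$ to live inside the endomorphism ring of a sufficiently large free algebra: the generators $g_i^f$ are uniform term operations, and two terms representing the same endomorphism on $F_{\cV}(\omega)$ agree on every $\bA\in\cV$, so the action of $\bR$ pulls back coherently to every member of $\cV$. Once this is in place, items (1)--(6) are verified line by line.
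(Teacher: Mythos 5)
Your proof is correct and follows essentially the same route as the paper's: define $+,-,0$ via $t$ and $c$, introduce the coefficient operations $g_i^f$ (the paper calls them $f_i$), close them under the ring operations to obtain $\bR$, interpret elements of $\bR$ as unary operations, and express each basic operation $f\in\sL$ as a sum $\sum_i g_i^f(x_i)$ plus the constant $f(c,\dots,c)$. The only cosmetic difference is that you realize $\bR$ as a subring of the endomorphism ring of the abelian group reduct of $F_\cV(\omega)$, whereas the paper takes equivalence classes of unary terms modulo $\cV$-equivalence; since identities hold in $\cV$ iff they hold on the free algebra, these two constructions are canonically isomorphic, and both correctly yield a well-defined ring structure.
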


\begin{proof}
Set
\begin{equation*}
\sC=\setm{d}{d\text{ constant symbol in }\sL}\cup\setm{f(c,\dots,c)}{f\text{ an operation symbol in }\sL}\,,
\end{equation*}
and set  $d^{\cV'}= d^{\cV}$ for all $d \in \sC$.

Let $+,-,0$, be binary, unary, and nullary operation symbols, respectively. We set
$$
x+^{\cV'}y=t^\cV(x,c^\cV,y)\,,
$$
$$
-^{\cV'}y=t^\cV(c^\cV,y,c^\cV)\,,
$$
and $0^{\cV'}=c^\cV$.

Given an operation symbol {$f\in\sL$} of arity $n\ge 1$, we consider the $\sL$-terms $f_1,\dots,f_n$ given by
\begin{equation}\label{E:ringterms}
f_i(x)=f(c,\dots,c,x,c,\dots,c)-f(c,\dots,c)\,,\quad\text{}
\end{equation}
where the $x$ appears at position $i$.
Let $R$ be the closure under $+$, $-$, and $\circ$ of all the $f_i$ and the term $x$ modulo the equational theory of $\cV$.
As $t$ acts as a homomorphism of $\cV$, it is easy to check that the induced
actions of  $+$, $-$, $0$, and $\circ$ on $R$ are well-defined and give to $R$ the structure of a ring with unit $[x]$. Moreover, each element
of $R$ is a set of $\cV$-equivalent unary terms in $\sL$. Hence we may interpret any element of $R$ on
$\cV'$ in the same way as any of its member terms interprets in  $\cV$.

Set $\sL'=R\cup\set{+,-,0}$. Note that $\sL'$ can be considered as an $R$-module language,
and it is easy to check that the interpretations of $\sL'$ induce an $R$-module structure on $\cV$.
 By construction, each operation in $\sL'\cup\sC$ interprets as some term in $\sL$.

Conversely, all constant operations of $\sL$ are in $\sC$, and so interpret as $\sL' \cup \sC$-terms.
Moreover, if $f$ is an $n$-ary operation symbol in $\sL$, then
$$
f^{\cV}(x_1,\dots,x_n)=[f_1]^{\cV'}(x_1)+^{\cV'}[f_2]^{\cV'}(x_2)+{\cV'}\dots+^{\cV'}[f_n]^{\cV'}(x_n)+^{\cV'}(f(c,c,\dots,c))^{\cV'}\,,
$$
which is the interpretation of a term in $\sL'\cup\sC$.
The last assertion is obvious.
\end{proof}

The following lemma will be used to expand varieties by adding a constant operation. The structure of the resulting variety is similar to the original variety. Combining this lemma with Lemma~\ref{L:SeparerConstantes} shows us that varieties of Abelian algebras are almost varieties of modules.

\begin{lemma}\label{L:AjouterConstantes}
Let $\sL$ be a similarity type. Pick a constant operation symbol $c$ which is not in $\sL$. Given an $\sL$-algebra $\bA$ and $x\in A$, we consider $\bA_x$ to be the expansion of $\bA$ to the language $\sL\cup\set{c}$, where
the new constant element is interpreted as $x$, that is $c^{\bA_x}=x$. The following statements hold.
\begin{enumerate}
\item If $f\colon \bA\to \bB$ is a morphism of $\sL$-algebras and $x\in A$, then $f\colon \bA_x\to \bB_{f(x)}$ is a morphism of $\sL\cup\set c$-algebras.
\item If $\cV$ is variety of $\sL$-algebras, then $\cV_c=\setm{\bA_x}{\bA\in\cV\text{ and }x\in A}$ is a variety of $\sL\cup\set c$-algebras.
\item If $\cV$ is affine, then $\cV_c$ is also affine.
\item If $\bF$ is freely generated by $x_1,\dots,x_n,y$ over $\cV$, then $\bF_y$ is freely generated by $x_1,\dots,x_n$ over $\cV_c$.
\item If $\cV$ is locally finite, then $\cV_c$ is also locally finite.
\end{enumerate}
\end{lemma}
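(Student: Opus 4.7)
The plan is to handle the five items in order, mostly by unraveling the definition of $\bA_x$ and checking that the extra constant symbol $c$ behaves well under the standard constructions.

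For (1), I would just observe that $f$ already preserves all $\sL$-operations by assumption, so the only new thing to check is that it preserves $c$; but by definition $f(c^{\bA_x})=f(x)=c^{\bB_{f(x)}}$.

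For (2), I would verify the three HSP closures. For subalgebras: any $\sL\cup\set{c}$-subalgebra $\bC$ of $\bA_x$ must contain $c^{\bA_x}=x$, so $\bC=\bC'_x$ where $\bC'\le \bA$ in $\cV$. For homomorphic images: apply (1), noting that a surjective morphism from $\bA_x$ to an $\sL\cup\set c$-algebra $\bD$ must have $\bD\cong \bB_{f(x)}$ for $\bB$ the $\sL$-reduct. For products: $\prod_i \bA_{i,x_i}=(\prod_i \bA_i)_{(x_i)_i}$. Since closures under H, S, and P all stay inside $\cV_c$, and $\cV_c$ is obviously closed under isomorphism, Birkhoff's theorem yields that $\cV_c$ is a variety. (Alternatively, one can note that $\cV_c$ is axiomatized by the $\sL$-equations of $\cV$ together with no additional equations on $c$, and hence is equational.)

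For (3), I would pick the affinity witness $t(x,y,z)=x-y+z$ of $\cV$. It is already an $\sL$-term and already a homomorphism $\bA^3\to\bA$, so it remains a term of $\bA_x$ and it automatically preserves $c$ because $t(x,x,x)=x-x+x=x$, i.e., $t(c^{\bA_x^3},c^{\bA_x^3},c^{\bA_x^3})=c^{\bA_x}$. Since the same $t$ works uniformly on all $\bA_x\in\cV_c$, $\cV_c$ is affine.

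For (4), I would verify the universal property directly. Let $\bA_z\in\cV_c$ and let $g\colon\set{x_1,\dots,x_n}\to A$ be any map. Extend $g$ to $\set{x_1,\dots,x_n,y}\to A$ by $y\mapsto z$; by freeness of $\bF$ in $\cV$, this lifts to a unique $\sL$-morphism $\overline g\colon \bF\to\bA$ with $\overline g(y)=z$, and then $\overline g\colon \bF_y\to\bA_z$ is a $\sL\cup\set c$-morphism by (1). Uniqueness is inherited: any $\sL\cup\set c$-morphism $\bF_y\to\bA_z$ extending $g$ must send $y$ to $z$ and so agrees with $\overline g$ by the $\sL$-freeness of $\bF$. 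Finally, (5) is an immediate corollary of (4): the free algebra on $n$ generators in $\cV_c$ is isomorphic to $F_\cV(n+1)_y$, which is finite whenever $\cV$ is locally finite.

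None of the steps look like a serious obstacle; the only point that needs a moment of care is checking in (3) that the affinity witness preserves the new constant, and in (2) that a subalgebra of $\bA_x$ automatically contains $x$, which is what forces $\cV_c$ to be closed under $\rm S$.
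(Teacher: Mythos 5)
Your proposal is correct and follows essentially the same reasoning as the paper for items (1), (3), (4), and (5). The only place you diverge is item (2): the paper argues by showing that $\cV_c$ coincides with the class $\cW$ of all $\sL\cup\set{c}$-algebras satisfying the equational theory of $\cV$ (each $\bA\in\cW$ is $\bA(\sL)_{c^{\bA}}$ with $\bA(\sL)\in\cV$), while your main route is a direct verification of closure under $\mathrm H$, $\mathrm S$, and $\mathrm P$ via Birkhoff's theorem; both are correct and elementary, and you in fact mention the paper's equational-axiomatization argument yourself as the alternative.
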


\begin{proof}
Let $\cV$ be a variety of $\sL$-algebras.
\begin{enumerate}
\item Clearly, $f$ preserves the structure of $\sL$-algebras. Moreover $f(c^{\bA_x})=f(x)=c^{\bB_{f(x)}}$, therefore $f$ is a morphism of $\sL\cup\set c$-algebras.
\item Denote by $\cW$ the variety of $\sL\cup\set c$-algebras satisfying the equational theory of $\cV$. Then $\cV_c\subseteq\cW$.

Conversely, let $\bA\in\cW$, denote by $\bA(\sL)$ the reduct of $\bA$ to $\sL$. Note that $\bA=\bA(\sL)_{c^{\bA}}$. Therefore $\cV_c=\cW$ is a variety of algebras.
\item Let $\cV$ be affine, as witnessed by the $\sL$-term $t$. Then $t$ is a term in $\sL\cup\set{c}$ that
witnesses the affinity of $\cV_c$.
\item First note that $\bF_y$ is generated by $x_1,\dots,x_n$. Pick an algebra in $\cV_c$. It can be written as $\bA_v$ where $\bA\in\cV$ and $v\in\bA$. Let $u_1,\dots,u_n \in \bA$. As $x_1,\dots,x_n,y$ freely generate $\bA$, there is a morphism $f\colon\bF\to\bA$ such that $f(x_i)=u_i$ for all $1\le i\le n$, and $f(y)=v$. Hence $f\colon\bF_y\to\bA_v$ is a morphism.
\item Follows immediately from (4).\qed
\end{enumerate}
\renewcommand{\qed}{}
\end{proof}

\begin{corollary}\label{C:ringsizeAbelian}
Suppose  that $\cV$ is generated by an Abelian algebra $\bA$ of type $\cL$, where $\card A=p_1^{\alpha_1}\dots p_k^{\alpha_k}$ for
distinct primes $p_i$. Let $\cV_c$ be the  variety constructed in Lemma \ref{L:AjouterConstantes} and $\bR$ be the ring constructed
in Lemma~\textup{\ref{L:SeparerConstantes}} with regard to the variety $\cV_c$.  Then  $\bR$ satisfies $\card R= p_1^{r_1}\dots p_k^{r_k}$, where $r_i \le \alpha_i^2$ for $i=1, \dots,k$.
\end{corollary}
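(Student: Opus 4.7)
The plan is to embed the ring $\bR$ as a subring of the endomorphism ring of the finite Abelian group underlying $\bA$ (with a chosen zero), and then bound that endomorphism ring using the classical Hom-formula for finite Abelian $p$-groups.

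Fix an arbitrary element $0\in A$; by the affinity of $\cV$, the operation $a+_0 b:=t^{\bA}(a,0,b)$ turns $(A,+_0)$ into an Abelian group of order $p_1^{\alpha_1}\cdots p_k^{\alpha_k}$. For each $y\in A$, the reduct $\bA_y'$ coming from Lemma~\ref{L:SeparerConstantes} is an $\bR$-module on $A$ with zero $y$, and its module addition $+_y=t^{\bA}(-,y,-)$ differs from $+_0$ by a translation. The heart of the argument is to establish that every $r\in \bR$ acts on each $\bA_y'$ through a single ``translated'' endomorphism of $(A,+_0)$, independent of $y$: namely, that there exists $\phi_r\in\mathrm{End}(A,+_0)$ such that
\[
r^{\bA_y'}(a)\;=\;\phi_r(a-_0 y)+_0 y \qquad \text{for all } y,a\in A.
\]
I would prove this formula by induction on the construction of $\bR$ from its generators in Lemma~\ref{L:SeparerConstantes}. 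For the identity term one has $\phi=\mathrm{id}$; for each generator $f_i$ a direct computation, using that $t^{\bA}$ is a homomorphism and hence that $f$ is affine in the $+_0$-structure, shows that $\phi_{f_i}$ is multiplication by the $i$-th linear coefficient of $f$. Preservation of the form under the ring operations $+$, $-$, $\circ$ of $\bR$ is then a short and mechanical calculation.

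With this formula in hand, $r\mapsto\phi_r$ is a ring homomorphism $\bR\to\mathrm{End}(A,+_0)$. It is injective because $\cV_c$ is generated as a variety by $\{\bA_y:y\in A\}$ (Lemma~\ref{L:AjouterConstantes}): if $\phi_r=\phi_{r'}$, the displayed formula gives $r^{\bA_y'}=r'^{\bA_y'}$ for every $y\in A$, and so $r=r'$ in $\bR$. Consequently $\card R$ divides $\card{\mathrm{End}(A,+_0)}$. To finish, I would decompose $(A,+_0)=\bigoplus_i A_i$ into its $p_i$-primary components ($\card{A_i}=p_i^{\alpha_i}$) and use $\mathrm{End}(A,+_0)=\prod_i\mathrm{End}(A_i)$. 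For a cyclic decomposition $A_i\cong\bigoplus_{j=1}^{m_i}\mathbb{Z}/p_i^{\beta_{i,j}}$ with $\sum_j\beta_{i,j}=\alpha_i$ and $m_i\le\alpha_i$, the standard formula $\card{\mathrm{Hom}(\mathbb{Z}/p^\beta,\mathbb{Z}/p^\gamma)}=p^{\min(\beta,\gamma)}$ gives $\card{\mathrm{End}(A_i)}=p_i^{s_i}$ with
\[
s_i\;=\;\sum_{j,k}\min(\beta_{i,j},\beta_{i,k})\;\le\; m_i\alpha_i\;\le\;\alpha_i^2.
\]
Hence $\card R$ divides $\prod_i p_i^{s_i}$ and is therefore of the form $\prod_i p_i^{r_i}$ with $r_i\le\alpha_i^2$.

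The main obstacle is the verification of the translation-conjugate formula: a base-case computation on the generators $f_i$ together with a check that the form is stable under $+$, $-$ and $\circ$ in $\bR$, made slightly delicate by the fact that the module addition $+_y$ on $\bA_y'$ depends on $y$ while $\phi_r$ lives in the fixed group $(A,+_0)$. Once this formula is in place, the injection of $\bR$ into $\mathrm{End}(A,+_0)$ and the arithmetic bound on the latter are routine.
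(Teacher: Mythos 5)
Your argument is essentially the same as the paper's: in both, the corollary reduces to embedding $\bR$ into the endomorphism ring of the finite Abelian group underlying $\bA$ (with a choice of zero) and then invoking the Sylow-wise bound on that endomorphism ring (Lemma~\ref{L:Majorations} in the paper; your explicit computation via cyclic decomposition). The one difference is in how the embedding is established. The paper fixes a single $a\in A$, shows directly via the Abelian term condition that ring terms agreeing on $\bA_a$ agree in all of $\cV_c$, and then observes that the resulting term functions are endomorphisms of $\langle A; t^{\bA_a}, c^{\bA_a}\rangle$. You instead isolate the translation-conjugate formula $r^{\bA_y'}(a)=\phi_r(a-_0 y)+_0 y$ with $\phi_r$ independent of $y$; these two formulations are logically equivalent, and the formula can in fact be obtained directly (no induction needed) by writing the binary term function $\bar\lambda^{\bA}$ as $\phi(a)+\psi(y)+d$ via affinity and using the ring-term condition $\bar\lambda(y,y)\approx y$ to conclude $d=0$ and $\psi=\mathrm{id}-\phi$. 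Your proposed induction over the generators of $\bR$ also works, just more verbosely. Both proofs rely on the fact that $\cV_c=\Var\setm{\bA_y}{y\in A}$, which follows from Lemma~\ref{L:AjouterConstantes}. So: correct, and same method, presented through a slightly cleaner structural lens.
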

\begin{proof}
All elements of $\bR$ are represented by unary terms $\lambda$ in $\cL \cup\{c\}$ that satisfy $\lambda(c) \approx c$ in $\cV_c$, as this identity holds for the generating set (\ref{E:ringterms}).
We call any such term $\lambda$ a \emph{ring term}. For each ring term $\lambda$ with variable $x$, let $\bar \lambda$ be the binary $\cL$-term over $\{x,y\}$ obtained by replacing every occurrence of $c$ with $y$.

By our proof of Lemma \ref{L:AjouterConstantes} (2), the equational theory $\bar \lambda$ is generated by the equational theory of $\cV$. It follows that any $\cV_c$-valid identity involving $c$ is obtained from
a  $\cV$-valid identity by replacing a variable with $c$. Hence, as $\cV_c \models\lambda(c) \approx c$ , $\cV \models\bar \lambda(y,y) \approx y$, and so all terms of the form $\bar \lambda$ are idempotent in $\cV$.

Let  $\lambda,\mu$ be ring terms of $\cV_c$, $a \in A$, and assume that $\bA_a \models\lambda(x) \approx \mu(x)$. We claim that $\cV_c \models \lambda(x) \approx \mu(x)$.

As $\cV_c$ is generated by $\famm{\bA_y}{y\in A}$, it suffices to show that for every $b \in A$, $\bA_b \models \lambda(x) \approx \mu(x)$.

Set $\bar \nu(x,y) = \bar \lambda(x,y) - \bar \mu(x,y) + x$, and note that this is an $\cL$-term. Let $\nu(x) = \bar \lambda(x,c) -  \bar\mu(x,c) + c$ be the corresponding $\cL \cup\{c\}$-term. Then as
$\bA_a \models\lambda(x) \approx \mu(x)$, it follows that
$\bA_a \models \nu(x) \approx c$. Hence $\bar\nu^{\bA}(x,a)=a$ for all $x$. Moreover, as $\bar \lambda$ and $\bar \mu$ are idempotent $\bar\nu^{\bA}(a,a)=a= \bar\nu^{\bA}(x,a)$ for all $x \in A$.

As $\bA$ is Abelian, we have that $\bar\nu^{\bA}(a,b)= \bar\nu^{\bA}(x,b)$ for all $x \in A$. Hence $\nu^{\bA_b}(a)= \nu^{\bA_b}(x)$ for all $x \in A$, and so $\nu$ is constant on $\bA_b$. As
$\nu^{\bA_b}(b)=b$, we have that $\bA_b \models \nu(x) \approx c$. This implies that $\bA_b \models \lambda(x) \approx \nu(x)$. As this holds for all $b \in A$,
we have $\cV_c \models \lambda(x) \approx \mu(x)$, as required.

It follows that if $\nu$ and $\mu$ represent distinct elements of $\bR$, then $\lambda^{\bA_a} \ne \nu^{\bA_a}$. Conversely, any term function $\lambda^{\bA_a}$ with  $\lambda^{\bA_a}(a)=a$ determines an element of
$\bR$. Let $\bar R$ be the set of these term functions, so that $\card{\bar R} = \card{\bR}$.

 As term functions of $\bA_a$, the elements of $\bar R$ are compatible with the functions $t^{\bA_a}$ and by construction, they are compatible with $c^{\bA_a}$. Hence every
$\lambda^{\bA_a} \in \bar R$ is an endomorphisms of the algebra $\langle A;\, t^{\bA_a}, c^{\bA_a}\rangle$. As this algebra has an underlying Abelian group structure, the result follows from Lemma \ref{L:Majorations}.
\end{proof}

Given sets $A,B$ we denote by $\cF(A,B)$ the set of all maps $A\to B$. Let $A$ be a set. Given $n\in\mathbb{N}$ we consider the set of $n$-ary partial operations defined by:
\begin{equation*}
\cC^n(A) = \bigcup\setm{\cF(X,A)}{X\subseteq A^n\,,X\not=\emptyset}\,.
\end{equation*}
The set of all partial operations over $A$ is
\begin{equation*}
\cC(A)=\set{\emptyset}\cup\bigsqcup_{n\in\mathbb{N}} \cC^n(A)\,.
\end{equation*}

Note that alternative definitions distinguish empty functions of different arity; the difference is immaterial for our results.
Denote by $\pi_i^n\colon A^n\to A$, $\vec x\mapsto x_i$ the canonical projection for all positive integers $n$ and all $1\le i\le n$. A \emph{partial clone} over a $A$ is a set $\cF\subseteq\cC(A)$, containing all projections and closed under composition of partial functions.

Let $\cF$ be a partial clone over $A$. A \emph{domain of arity $n$ of $\cF$} is a subset $D$ of $A^n$ such that there exists $f\in\cF$ such that $\dom f=D$.

\begin{lemma}\label{L:ClonePartielDomaine}
Let $\cF$ be a partial clone over a set $A$. Let $n$ be a positive integer. Let $C,D$ be domains of arity $n$ of $\cF$. The following statements hold.
\begin{enumerate}
\item For all $1\le i\le n$, $\pi_i^n\res D\in\cF$.
\item The set $C\cap D$ is a domain in $\cF$.
\item Let $p\in\cF$ of arity $n$. If $D\subseteq \dom p$, then $p\res D\in\cF$.
\item If $p=(p_1,\dots,p_n)\colon A^k\to A^n$ are in $\cF$, then $p^{-1}(D)$ is a domain of $\cF$.
\end{enumerate}
\end{lemma}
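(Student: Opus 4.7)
The plan is to derive all four items from a single composition trick. Since $\cF$ contains all projections and is closed under composition, the binary projection $\pi_1^2$ combined with any $n$-ary $f,g \in \cF$ yields $h := \pi_1^2 \circ (f,g) \in \cF$, where by the usual semantics of composing partial operations
\[
\dom h = \dom f \cap \dom g, \qquad h(\vec x) = f(\vec x) \text{ for } \vec x \in \dom h.
\]
So composition with a partner function whose domain is exactly the set we wish to land in restricts the domain without changing the values. All four items are applications of this idiom.

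For (1), pick a witness $f\in\cF$ with $\dom f = D$, and set $h := \pi_1^2 \circ (\pi_i^n, f)$. Since $\pi_i^n$ is everywhere defined, $\dom h = A^n \cap D = D$, and $h(\vec x)=\pi_i^n(\vec x)$ on $D$; hence $\pi_i^n \res D \in \cF$. For (2), pick witnesses $f,g\in\cF$ with $\dom f = C$ and $\dom g = D$; then $\pi_1^2 \circ (f,g) \in \cF$ has domain precisely $C\cap D$, so $C\cap D$ is a domain of $\cF$. For (3), pick $f\in\cF$ with $\dom f = D$; then $\pi_1^2 \circ (p,f) \in \cF$ has domain $\dom p\cap D = D$ (using $D\subseteq \dom p$) and agrees with $p$ on $D$, so it is exactly $p\res D$.

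For (4), pick $f\in\cF$ with $\dom f = D$, and form $g := f \circ (p_1,\dots,p_n) \in \cF$. By the composition rule for partial operations,
\[
\dom g = \Setm{\vec x \in A^k}{\vec x \in \textstyle\bigcap_i \dom p_i \text{ and } (p_1(\vec x),\dots,p_n(\vec x)) \in \dom f},
\]
which equals $\setm{\vec x \in \dom p}{p(\vec x)\in D} = p^{-1}(D)$, so $p^{-1}(D)$ is a domain of $\cF$.

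There is no real obstacle: the only care needed is to respect the convention for the domain of a composition of partial maps (and in particular to use a binary projection on top of a pair $(f,g)$ of $n$-ary maps rather than trying to directly ``restrict'' a function, which the abstract partial clone axioms do not permit as a primitive operation). Once the composition semantics are fixed, each item is a one-line verification.
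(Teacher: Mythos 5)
Your proof is correct and follows essentially the same approach as the paper's: composing $\pi_1^2$ with a pair of partial maps to force the desired domain intersection for (1)--(3), and composing with a domain witness for (4). The only cosmetic difference is that the paper derives (1) as a literal special case of (3) rather than writing out the (identical) composition directly.
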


\begin{proof}
Take $f\colon C\to A$ and $g\colon D\to A$ in $\cF$.
\begin{enumerate}
\item is a special case of (\ref{I:domainrestrict}), shown below.

\item $\pi_1^2(f(\vec x),g(\vec x ))$ is defined if an only if $\vec x\in C\cap D$, thus $C\cap D$ is a domain in $\cF$.
\item $\pi_1^2(p(\vec x),g(\vec x))$ is defined if and only if $\vec x\in \dom p\cap\dom g=D$. Moreover $p(\vec x)=\pi_1^2(p(\vec x),g(\vec x))$, for all $\vec x\in D$, therefore $p\res D\in\cV$. \label{I:domainrestrict}
\item
The domain of $g\circ p$ is $p^{-1}(D)$, therefore $p^{-1}(D)$ is a domain of $\cF$.\qed
\end{enumerate}
\renewcommand{\qed}{}
\end{proof}

We will consider partial functions whose domains are subalgebras and which are homomorphisms.
For algebras $\bA \le \bB$ and $\bC$ and a homomorphism $f$ from $\bA$ to~$\bC$, we say that $f$ has a proper extension if
there is an algebra $\bD$ with $\bA < \bD \le \bB$ and a homomorphism $f'$ from $\bD $ to $\bB$ that extends $f$.

As mentioned in the introduction, we will avoid giving a detailed definition of full and strong dualizability. Instead we will utilize the following results from~\cite{CD98} and~\cite{LMW}.

\begin{definition}[\cite{LMW}]\label{D:enoughTAO}
A finite algebra $\bA$ \emph{has enough total algebraic operations}, if there exists $\varphi\colon\omega\to\omega$ such that for all $\bB \le \bC \le \bA^n$ and every $h \in \hom(\bB, \bA)$,
which has an extension to $\bC$, there exists $X \subseteq \hom(\bA^n,\bA)$ such that
\begin{enumerate}
\item $|X| \le \varphi(|B|)$,
\item There is a homomorphism $k$ from $\bC/\cap\setm{\ker(f|_C)}{ f \in X}$ to $\bA$ such that $k\circ\alpha=h$,  where $\alpha$ is the natural map from $\bB$ to  $\bC/\cap\setm{\ker(f|_C)}{f \in X}$.
\end{enumerate}
\end{definition}

\begin{theorem}[\cite{LMW}, Theorem 4.3]\label{T:enoughTAO}
A finite dualizable algebra that has enough total algebraic operations is strongly dualizable.
\end{theorem}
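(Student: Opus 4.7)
The plan is to invoke the standard characterization of strong duality from the duality-theoretic literature, which reduces the question to verifying an injectivity (equivalently, interpolation) condition on the topological side: if $\bA$ is dualizable via an alter ego $\undertilde{\bA}$, then this duality is strong as soon as $\undertilde{\bA}$ is injective in the topological quasivariety it generates, meaning that every morphism from a closed substructure into $\undertilde{\bA}$ extends to the ambient structure. Crucially, adjoining more structure to an alter ego never destroys dualizability, so the task is to enrich a given dualizing alter ego with enough total algebraic operations to force the extension property, using precisely the factorization data granted by the hypothesis.

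Concretely, I would fix an alter ego $\undertilde{\bA}_0$ witnessing the given dualizability of $\bA$ and form $\undertilde{\bA}$ by adjoining every total algebraic operation on $A$, i.e.\ every element of $\hom(\bA^n,\bA)$ for each $n$. To verify the interpolation condition, I would translate a local topological morphism through the dualizing adjunction into an algebraic partial homomorphism $h\colon\bB\to\bA$ coming from some $\bB\le\bC\le\bA^n$, which is a priori known to extend (somewhere) to $\bC$. Applying Definition \ref{D:enoughTAO} yields a finite $X\subseteq\hom(\bA^n,\bA)$ with $|X|\le\varphi(|B|)$ and a factorization $h=k\circ\alpha$ through the kernel-intersection quotient $\bC/\bigcap\setm{\ker(f\res C)}{f\in X}$. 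Because every $f\in X$ is present as a total operation in $\undertilde{\bA}$, the map $k$ pulls back through the duality to furnish the required extension on the topological side.

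The main obstacle will be ensuring that this algebraic factorization translates cleanly into a topological extension: one must verify that the kernel-intersection quotient on the algebraic side corresponds, under the dualizing adjunction, to the closed substructure on the topological side cut out by the operations of $X$, and that the resulting extension really does agree with the original topological morphism on its original domain. Once this correspondence is pinned down, the size bound $\varphi(|B|)$ produces the compactness needed to control the interpolation uniformly, and strong dualizability then follows from the classical third duality theorem of Clark--Davey.
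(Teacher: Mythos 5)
The paper does not prove this statement: it imports it verbatim as Theorem~4.3 of Lampe--McNulty--Willard \cite{LMW} and cites it as a black box. There is therefore no proof in the paper against which to compare your argument, and the appropriate treatment in this context would simply be to cite the reference.

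Regarding your sketch on its own merits: it captures the correct starting intuition (the relevant characterization of strong duality is an injectivity/interpolation property of the alter ego, and the ``enough total algebraic operations'' hypothesis is tailored to supply factorizations through boundedly many total operations), but it is not a proof. You explicitly flag the crux --- that the kernel-intersection quotient $\bC/\bigcap\setm{\ker(f\res C)}{f\in X}$ must correspond, under the dualizing adjunction, to the correct closed substructure on the topological side, and that the resulting extension must restrict correctly --- and then leave it unresolved, merely asserting that ``once this correspondence is pinned down'' the result follows. That correspondence is precisely the content of the LMW argument, which proceeds via Willard's rank machinery (the hypothesis bounds the rank of $\bA$, and a dualizable algebra of finite rank is strongly dualizable); it is not a routine unwinding of definitions. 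The appeal to ``the classical third duality theorem of Clark--Davey'' is too vague to carry the weight placed on it, and the ``compactness'' role you attribute to the bound $\varphi(|B|)$ is not explained. In short, the architecture is plausible but the load-bearing step is missing.
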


\begin{definition}[\cite{CD98}, pg. 73] Let $\bA$ be an algebra. The \textbf{\emph{enriched partial hom-clone}} of $\bA$ consists of all homomorphisms from $\bB$ to $\bA$, for all subalgebras $\bB$ of $\bA^n$, and
all positive integers $n$.
\end{definition}

\begin{theorem}[\cite{CD98}, Brute Force Strong Duality Theorem 3.2.2] \label{T:StBruteForce}
Let $\bA$ be a finite algebra. If some alter ego  $\undertilde{\bA}'$ yields a strong duality on $\bA$, then $\undertilde{\bA}=\langle A, \cP, \tau\rangle$,
yields a strong duality on $\bA$, where $\cP$ is the  enriched partial hom-clone of $\bA$ and $\tau $ is the discrete topology on $A$.
\end{theorem}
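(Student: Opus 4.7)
The strategy is to exploit the fact that among alter egos built from algebraic partial operations on $A$, enrichment preserves strong duality. Since the enriched partial hom-clone $\cP$ contains \emph{every} algebraic partial operation compatible with $\bA$, any strongly dualizing alter ego can be thickened into $\langle A, \cP, \tau\rangle$ without losing the strong duality, and this is how I would prove the theorem.

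Concretely, I would proceed as follows. First, since $\bA$ is finite, any alter ego that yields a duality must carry the discrete topology, so we may assume $\undertilde{\bA}'=\langle A, G', \tau\rangle$ for some collection $G'$ of partial operations and relations. A standard observation (which I would verify from first principles by checking hom-preservation on powers of $\bA$) is that every operation-like element of $G'$ must be algebraic in the sense that it is a homomorphism from some subalgebra of $\bA^n$ into $\bA$, and hence belongs to $\cP$; every relation in $G'$ is entailed by the partial operations in $\cP$ via the usual entailment calculus (taking graphs and compositions).

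Second, I would invoke the enrichment lemma for partial operations: adding an algebraic partial operation to an alter ego that yields a strong duality preserves the strong duality. Applied iteratively to every $p \in \cP\setminus G'$, this yields that $\langle A, G'\cup\cP, \tau\rangle$ still strongly dualizes, and after discarding the entailed relations of $G'$ one is left with $\langle A, \cP, \tau\rangle$.

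The principal obstacle is the enrichment step itself, because strong duality asks for more than full duality: $\undertilde{\bA}$ must be injective in the topological quasivariety it generates, and enlarging the alter ego enlarges that quasivariety. The natural resolution uses the injective/projective (IC) duality machinery of \cite{CD98} together with the fact that an algebraic partial operation can be ``retracted'' from any embedding in the enriched topological quasivariety; this then propagates injectivity from the smaller alter ego to the enriched one. Because there are no new subtleties arising from the topology (it remains discrete) and because $\cP$ is closed under the operations needed to witness the retractions, the argument closes once the enrichment lemma is in hand.
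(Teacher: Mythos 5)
This statement is a direct citation from Clark and Davey (Theorem~3.2.2 of \cite{CD98}); the paper gives no proof of it, so there is no in-paper argument to compare your proposal against.

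Your outline has the right overall shape---reduce to the discrete topology, observe that the operations of $\undertilde{\bA}'$ lie in $\cP$ and its relations are entailed by $\cP$, then pass to the larger alter ego $\langle A,\cP,\tau\rangle$---but it hinges on a claim you state without proving, and that claim is where essentially all the content lives. The assertion that ``adding an algebraic partial operation to a strongly dualizing alter ego preserves strong duality'' is not routine: strong duality requires $\undertilde{\bA}$ to be injective in the topological quasivariety it generates, and enriching the structure simultaneously shrinks the class of closed substructures and tightens what an extending morphism must preserve, so injectivity does not transfer by a formal ``more structure is fine'' argument. Appealing to ``IC machinery'' and ``retraction'' names the right circle of ideas but does not constitute a proof; this enrichment step is precisely the substance of the cited theorem, not a lemma you can treat as given while proving it.

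There are two further technical problems. First, iterating ``for every $p\in\cP\setminus G'$'' runs over an infinite set (operations of every arity), so literal iteration never reaches $\langle A, G'\cup\cP,\tau\rangle$; you would need a one-shot argument that enriching by an arbitrary \emph{family} of algebraic partial operations preserves strong duality, or some limit argument. Second, the closing ``discard the entailed relations of $G'$'' step is itself nontrivial for strong (as opposed to plain) duality: that is exactly the territory of the M-shift Strong Duality Lemma (cf.\ the paper's Lemma~\ref{L:StShift}), which comes with hypotheses on the form of entailment that you would have to verify rather than wave past. Clark and Davey's own argument does not transport strong duality along a chain of enlargements; they verify directly that the brute-force alter ego satisfies their internal characterization of strong duality (hom-closure and term-closure of dual images, plus injectivity), using the hypothesized strong duality via $\undertilde{\bA}'$ to supply the needed extension maps. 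If you want a self-contained proof, that is the route to take.
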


Our next result is a special application  of the M-shift strong duality Lemma from \cite{CD98} to the alter ego $\langle A, \cP, \tau\rangle$.

\begin{lemma}[cf. \cite{CD98}, Lemma 3.2.3] \label{L:StShift}Let $\bA$, $\cP$, and $\tau$ be as in the Theorem \ref{T:StBruteForce}. Let $\cP'\subseteq \cP$ be a generating set of $\cP$, that is, every $h \in \cP$ is a composition of elements of $\cP'$ and projections. If $\langle A, \cP, \tau\rangle$ yields a strong duality on $\bA$, then $\langle A, \cP', \tau\rangle$ yields a strong duality on $\bA$.
 \end{lemma}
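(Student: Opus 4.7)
The plan is to invoke the M-shift strong duality Lemma (Lemma 3.2.3 in \cite{CD98}) applied to the pair of alter egos $\undertilde{\bA}=\langle A,\cP,\tau\rangle$ and $\undertilde{\bA}'=\langle A,\cP',\tau\rangle$. Recall that the M-shift lemma asserts that if one alter ego is obtained from another by adding or removing relations, total operations, or partial operations that are term-definable from the existing structure, then all dualizability properties---duality, full duality, and strong duality---are preserved. The content of this lemma is that the hom-functors into the two alter egos produce topological structures that are term-equivalent, so the dual adjunction and the natural transformations witnessing the duality are unaffected.

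First I would verify the two directions of term-definability. Since $\cP'\subseteq\cP$, every operation in $\cP'$ is trivially present in the structure $\undertilde{\bA}$. Conversely, by hypothesis every $h\in\cP$ is a composition of elements of $\cP'$ together with projections; in the language of natural duality theory this is precisely the statement that $h$ is a term operation in the type of $\undertilde{\bA}'$. Hence the partial clones on $A$ generated by $\cP$ and by $\cP'$ coincide, which is the input required by the M-shift lemma.

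Next I would record the mutual translation of structures: every member $\undertilde{\bX}'$ of the topological quasivariety generated by $\undertilde{\bA}'$ uniquely expands to a member $\undertilde{\bX}$ of the topological quasivariety of $\undertilde{\bA}$ by interpreting each $h\in\cP$ as the corresponding composition of the operations from $\cP'$ on $\undertilde{\bX}'$; conversely, any structure in the topological quasivariety of $\undertilde{\bA}$ has an underlying $\cP'$-reduct. These assignments are mutually inverse and continuous, and they commute with the evaluation maps; since the strong duality for $\undertilde{\bA}$ is witnessed via these evaluation maps, strong duality transfers to $\undertilde{\bA}'$.

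The main (and essentially only) obstacle is the bookkeeping involved in the partial-operation case, since the composition of partial operations demands careful tracking of domains. However, $\cP$ is by definition the enriched partial hom-clone of $\bA$ and so is closed under precisely the composition operation invoked here; thus the technicalities are exactly those already handled in the proof of the M-shift lemma in \cite{CD98}, and no further argument specific to our setting is required.
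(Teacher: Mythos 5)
Your proposal is correct and takes essentially the same approach as the paper: the paper offers no explicit proof of Lemma~\ref{L:StShift}, describing it only as ``a special application of the M-shift strong duality Lemma from \cite{CD98}'' and tagging it with a ``cf.~Lemma~3.2.3'' citation, which is exactly the lemma you invoke. Your verification that ``composition of elements of $\cP'$ and projections'' yields the entailment hypothesis required by the M-shift lemma is the right reading and fills in the (small) gap that the paper leaves to the reader.
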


\section{A generating set for domains of partial functions}\label{S:factoring}

In order to show our main result, we want to establish that every Abelian algebra satisfies the conditions of Definition \ref{D:enoughTAO}, so that we may use Theorem \ref{T:enoughTAO}. The set $X$ appearing in  the definition can actually be taken as a set of coordinate projections. Hence to establish a necessary bound on $X$, we need to be able to show that partial compatible functions on $\bA$ (i.e. homomorphisms from subpowers of $\bA$ to $\bA$), factor though partial compatible functions of  bounded arity. As a first step towards our result, in this section we show that we can generate all possible domains of such functions from a finite set.

The following definitions are from \cite{Gil14}. Given Abelian algebras $\bA$ and $\bS$ and a homomorphism $k\colon \bA \to \bS$, let $\cH_k(\bA^2, \bS)$ consist of all homomorphisms $f\colon \bA^2 \to \bS$ that satisfy $f(x,x)=k(x)$. We set $\bar k \in \cH_k(\bA^2, \bS)$ as $\bar k(x,y) = k(y)$. In \cite[Lemma~~5.4]{Gil14}, it is shown that $\langle\cH_k(\bA^2,\bS);+^{\bar k}\rangle$ is an Abelian group (where $+^{\bar k}$ is defined as in Lemma \ref{L:SeparerConstantes}), and that the  isomorphism type of $\langle\cH_k(\bA^2,\bS);+^{\bar k}\rangle$ does not depend on $k$. We let $\langle\cH(\bA^2,\bS);+\rangle$ stand for this isomorphism type.

The following lemma, proved in \cite[Lemma~5.7]{Gil14},  expresses that (total) homomorphisms $f\colon \bA^n\to \bS$ can be factored through a small power of $\bA$, which does not depend on $n$ but depends only on  $\langle\cH(\bA^2,\bS);+\rangle$.
\begin{lemma}\label{L:PetiteFactorisation}
Let $\bA,\bS$ be algebras in a variety of Abelian algebras. Let $N$ be a positive integer such that $\langle\cH(\bA^2,\bS);+\rangle$ has a family of generators with $N-1$ elements. Let $f\colon \bA^n\to \bS$ be a homomorphism. Then there exists a homomorphism $p\colon \bA^n\to \bA^{N}$ that is a term in $t$, and a homomorphism $q\colon \bA^{N}\to \bS$, such that $f=q\circ p$.
\end{lemma}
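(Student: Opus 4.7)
The plan is to mimic, inside the purely affine setting provided by $t$, the module identity
$$f(x_1, \ldots, x_n) = f(c, \ldots, c) + \sum_{i=1}^n \bigl(f(c, \ldots, c, x_i, c, \ldots, c) - f(c, \ldots, c)\bigr),$$
and then to compress the resulting one-variable pieces using the hypothesized $N-1$ generators of $\cH_k(\bA^2, \bS)$. First, I would set $k(x) := f(x, \ldots, x)$ and, for each $i$, introduce the slice $f_i \colon \bA^2 \to \bS$ defined by $f_i(x, y) := f(x, \ldots, x, y, x, \ldots, x)$ with $y$ in position $i$. Each $f_i$ is a homomorphism and $f_i(x, x) = k(x)$, so $f_i \in \cH_k(\bA^2, \bS)$. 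An induction on $n$, using that $t$ is a homomorphism and that $\bA$ is Abelian, then establishes a telescoping reconstruction of $f$ from the $f_i$: schematically
$$f(x_1, \ldots, x_n) = f_1(x_1, x_1) +^{\bar k} \bigl(f_2(x_1, x_2) -^{\bar k} \bar k(x_1, x_1)\bigr) +^{\bar k} \cdots +^{\bar k} \bigl(f_n(x_1, x_n) -^{\bar k} \bar k(x_1, x_1)\bigr),$$
with $x_1$ serving as the diagonal base point; the right-hand side is manifestly a homomorphism, being built from the $f_i$'s using only $t$.

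Next I would apply the hypothesis. Fix generators $g_1, \ldots, g_{N-1}$ of the Abelian group $\langle\cH_k(\bA^2, \bS); +^{\bar k}\rangle$ and write each $f_i$ as a formal $\mathbb{Z}$-linear combination $\sum_{r=1}^{N-1} m_{ir} g_r$. Substituting into the telescoping formula and regrouping by $r$ yields an expression for $f$ in which, for each fixed $r$, the generator $g_r$ appears several times with second coordinate ranging over the $x_i$'s. The decisive step is to merge, for each $r$, all these occurrences of $g_r$ into a single evaluation $g_r(x_1, p_r(x_1, \ldots, x_n))$, where $p_r$ is a $t$-term dictated by the integers $m_{ir}$. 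This merging is possible because $g_r$, being a homomorphism $\bA^2 \to \bS$, distributes $t$-combinations through its second coordinate when the first is held fixed on the diagonal.

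Finally I would assemble the factorization. Set $p_0 := \pi_1^n$ and $p := (p_0, p_1, \ldots, p_{N-1}) \colon \bA^n \to \bA^N$, a tuple of $t$-terms by construction. Define $q \colon \bA^N \to \bS$ as the homomorphism obtained by iterating the $g_r$'s and $k$ in the $+^{\bar k}$-structure, so that $q(y_0, y_1, \ldots, y_{N-1})$ precisely realizes the regrouped expression with $y_0$ playing the role of $x_1$ and $y_r$ that of $p_r$; then $q \circ p = f$ by construction. The main obstacle I expect is the merging step: rigorously combining several evaluations of a single $g_r$ at different $x_i$'s into one evaluation on a $t$-aggregated input ultimately rests on the Abelian/affine content of $\bA$ together with the homomorphism property of $g_r$. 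Verifying that $p$ is expressible as a term in $t$ alone (no auxiliary constants), and tracking the $-^{\bar k}\bar k$ corrections together with possibly negative coefficients $m_{ir}$, make up the remaining careful bookkeeping.
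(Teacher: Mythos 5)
Your overall plan is the right one and almost certainly matches Gillibert's argument in~\cite{Gil14}: slice $f$ into one-variable pieces $f_i\in\cH_k(\bA^2,\bS)$, express each $f_i$ in terms of the $N-1$ generators, regroup by generator, and read off $p$ and $q$ from the regrouped expression. The telescoping identity, the observation that each $f_i$ lies in $\cH_k$, and the assembly of $p=(\pi_1^n,p_1,\dots,p_{N-1})$ and $q$ at the end are all correct. You also correctly single out the merging step as the crux.

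However, there is a genuine gap in that merging step, caused by a mismatch between your slicing convention and the convention $\bar k(x,y)=k(y)$ used to define the group $\langle\cH_k;+^{\bar k}\rangle$. You put the \emph{varying} argument of $f_i$ in the \emph{second} slot, so in your telescoping you evaluate at $(x_1,x_i)$. At that point the identity element of $\cH_k$ evaluates to $\bar k(x_1,x_i)=k(x_i)$, not to $k(x_1)$. Tracking this through $f_i -^{\bar k}\bar k=\sum_r m_{ir}(g_r-^{\bar k}\bar k)$ one finds (in module notation, for clarity)
\[
f(\vec x)-k(x_1)\;=\;\sum_r\bigl(g_r(x_1,p_r)-k(p_r)\bigr)\;+\;\sum_{i}\bigl(k(x_i)-k(x_1)\bigr),
\]
with $p_r = x_1-\sum_i m_{ir}(x_1-x_i)$. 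So $q\circ p$ as you defined it misses $f$ by the residual term $\sum_i\bigl(k(x_i)-k(x_1)\bigr)$, which is generically nonzero. In particular, $p_r$ is \emph{not} dictated by the integers $m_{ir}$ alone. To close the gap you must also absorb this residual: the ``transposed'' slice $\tilde k\colon(x,y)\mapsto k(x)$ belongs to $\cH_k$, hence $\tilde k=\sum_r n_r g_r$ for some integers $n_r$, and $\sum_i\bigl(k(x_i)-k(x_1)\bigr)$ can be folded into the $p_r$ via these $n_r$. Equivalently and more cleanly, you should define the slice with the varying argument in the \emph{first} slot, $f_i(x,y):=f(y,\dots,y,x,y,\dots,y)$ with $x$ in position $i$, and evaluate at $(x_i,x_1)$; then $\bar k(x_i,x_1)=k(x_1)$ agrees with the basepoint, the residual vanishes identically, and $p_r$ really is determined by the coefficients in $f_i=\sum_r m_{ir}g_r$ alone. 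Either fix makes your outline into a correct proof.
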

\begin{corollary}\cite[Corollary~5.6]{Gil14}\label{C:BoundH} Let $\bA$ and $\bS$ be Abelian algebras such that  $|A|=p_1^{\alpha_1}\dots p_k^{\alpha_k}$, $|S|=p_1^{\beta_1}\dots p_k^{\beta_k}$ for distinct primes $p_i$. Then
$|\cH(\bA^2, \bS)|$ divides $p_1^{\alpha_1\beta_1}\dots p_k^{\alpha_k \beta_k}$, and $\langle\cH(\bA^2,\bS);+\rangle$ has a generating set of size $\max_{1 \le i\le k}(\alpha_i\beta_i)$.
\end{corollary}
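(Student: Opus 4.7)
My plan for proving Corollary~\ref{C:BoundH} is to identify the Abelian group $\langle\cH(\bA^2,\bS);+\rangle$ with the module-hom group $\hom_{\bR}(\bA,\bS)$ for a suitable ring $\bR$, and then to apply elementary bounds on finite Abelian groups.

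Applying Lemmas~\ref{L:SeparerConstantes} and~\ref{L:AjouterConstantes}, I would first adjoin a constant and re-present $\bA$ and $\bS$ as $\bR$-modules (with some additional constants) over a common ring $\bR$. Every homomorphism $f\colon \bA^2\to \bS$ of the original Abelian structure is then an affine map of the underlying $\bR$-modules, and hence admits a unique decomposition $f(x,y)=\phi(x)+\psi(y)+c$ with $\phi,\psi\in\hom_{\bR}(\bA,\bS)$ and $c\in S$; the constraint $f(x,x)=k(x)$ fixes $c$ and forces $\phi+\psi$ to equal the linear part of $k$. Consequently $\cH_k(\bA^2,\bS)$ is an affine copy of $\hom_{\bR}(\bA,\bS)$ via $f\mapsto\phi$, and a short calculation shows that this bijection transports the group operation $+^{\bar k}$ on $\cH_k$ to ordinary addition on $\hom_{\bR}(\bA,\bS)$, giving $\langle\cH(\bA^2,\bS);+\rangle\cong\hom_{\bR}(\bA,\bS)$ as Abelian groups, independently of the choice of $k$.

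With this identification, bounding $|\cH(\bA^2,\bS)|$ reduces to bounding $|\hom_{\bR}(\bA,\bS)|$. Since the latter embeds into $\hom_{\mathbb{Z}}(\bA,\bS)$ of the underlying Abelian groups, it suffices to bound $|\hom_{\mathbb{Z}}(\bA,\bS)|$. For each prime $p_i$, invoking the structure theorem I would write the $p_i$-primary components as $\bA_{p_i}\cong\bigoplus_j \mathbb{Z}/p_i^{a_j}$ and $\bS_{p_i}\cong\bigoplus_\ell \mathbb{Z}/p_i^{b_\ell}$ with $\sum_j a_j=\alpha_i$ and $\sum_\ell b_\ell=\beta_i$; then
\[
|\hom_{\mathbb{Z}}(\bA_{p_i},\bS_{p_i})|=p_i^{\sum_{j,\ell}\min(a_j,b_\ell)}\le p_i^{\alpha_i\beta_i},
\]
using the elementary estimate $\min(a,b)\le ab$ for positive integers $a,b$. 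Combining across primes yields the divisibility claim $|\cH(\bA^2,\bS)|\mid\prod_i p_i^{\alpha_i\beta_i}$.

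For the generating-set claim I would invoke the fact that a finite Abelian $p$-group of order $p^n$ has $p$-rank at most $n$; hence the $p_i$-component of $\hom_{\bR}(\bA,\bS)$ needs at most $\alpha_i\beta_i$ generators, and therefore the whole group can be generated by $\max_{1\le i\le k}(\alpha_i\beta_i)$ elements. The main obstacle I expect lies in the first step: carefully verifying that the affine-decomposition bijection is a \emph{group} isomorphism for the operation $+^{\bar k}$, so that the abstract isomorphism type of $\langle\cH(\bA^2,\bS);+\rangle$ is genuinely $k$-independent. Once this bridge to module theory is crossed, the size and rank estimates follow routinely from the structure theorem for finite Abelian groups.
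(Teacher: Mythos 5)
The paper does not give its own proof of Corollary~\ref{C:BoundH}; it simply cites \cite[Corollary~5.6]{Gil14}. I therefore assess your argument on its own terms against the tools the paper makes available.

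Your approach is sound and closely aligned with the paper's overall machinery: Lemmas~\ref{L:SeparerConstantes} and \ref{L:AjouterConstantes} are exactly the tools used elsewhere in the paper to convert an Abelian algebra into a module with constants, and the appendix Lemma~\ref{L:Majorations}(2) already records the divisibility $\card{\Hom(\bF,\bE)}\mid p_1^{\alpha_1\beta_1}\dots p_k^{\alpha_k\beta_k}$ for Abelian groups, which is the elementary bound you re-derive via the structure theorem. Your derivation of the rank bound (a $p$-group of order $p^n$ has at most $n$ cyclic invariant factors, and by CRT the generating number of the whole group is the maximum over the primary parts) is correct. The divisibility of $\card{\cH}$ into the bound follows because $\Hom_{\bR}(\bA,\bS)$ is a subgroup of $\Hom_{\mathbb{Z}}(\bA,\bS)$.

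The one place that deserves more than your ``short calculation'' is the identification $\langle\cH_k(\bA^2,\bS);+^{\bar k}\rangle\cong\Hom_{\bR}(\bA,\bS)$, and in particular its surjectivity. If you adjoin the constant so that $c$ interprets as a fixed $a\in A$ in $\bA$ (hence $(a,a)$ in $\bA^2$) and as $k(a)$ in $\bS$, every $f\in\cH_k(\bA^2,\bS)$ becomes a genuine $\bR$-module homomorphism sending $0$ to $0$, so the decomposition is in fact \emph{linear}, $f(x,y)=\phi(x)+\psi(y)$ with no translation term; your extra constant $c$ collapses to $0$. Conversely, given $\phi\in\Hom_{\bR}(\bA,\bS)$, one must check that $f(x,y)=\phi(x)+(\kappa-\phi)(y)$ (where $\kappa$ is the linear representation of $k$) preserves the remaining constants of $\sC$ and hence, via Lemma~\ref{L:SeparerConstantes}(3), all of $\sL$; this boils down to $(\phi+\psi)(d^{\bA})=\kappa(d^{\bA})=k(d^{\bA})=d^{\bS}$ for every $d\in\sC$, which holds precisely because $k$ is a homomorphism. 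Once that is in place, the compatibility with $+^{\bar k}$ follows directly from $t(x,y,z)=x-y+z$. So the bridge you flag as the main obstacle does hold, but it should be spelled out rather than asserted. With that caveat, the proof is correct and follows the natural module-theoretic route suggested by the paper's own supporting lemmas.
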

\begin{corollary}\label{C:CloneDomaineFinimentEngendre}
Let $\bA$ be a finite Abelian algebra. Let $p_1^{\alpha_1}p_2^{\alpha_2}\dots p_k^{\alpha_k}$ be the prime decomposition of $\card A$.
Let $N=1+\max_{1\le i \le k}(\alpha_i^3)$. Denote by $\cF$ the partial clone over $A$ generated by $t$ and all $\pi_1^N\res C$ for $\bC$ a subalgebra of $\bA^N$. Then $\pi_1^n\res D \in\cF$ for all positive integer $n$ and all subalgebras $\bD$ of $\bA^n$.
\end{corollary}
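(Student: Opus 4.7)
The plan is to show that the subalgebra $D$ is itself a \emph{domain} of $\cF$: granted this, Lemma~\ref{L:ClonePartielDomaine}(1) immediately yields $\pi_1^n\res D\in\cF$. To produce $D$ as a domain, I would cut it out as a finite intersection of preimages of subalgebras of $\bA^N$ under coordinate tuples of $t$-terms, since each such preimage is visibly the domain of a single composition of some $\pi_1^N\res C$ with $t$-terms and projections (all of which lie in $\cF$), and then close under finite intersection via Lemma~\ref{L:ClonePartielDomaine}(2).

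For the pieces, I use the congruence $\Theta_D$ from the definition preceding Lemma~\ref{L:ThetaDiscr}; the remark after that lemma tells us $D$ is one of its classes. Write $\bA^n/\Theta_D$ as a subdirect product of finitely many subdirectly irreducible algebras $\bS_\ell\in\Var(\bA)$, which furnishes homomorphisms $f_\ell\colon\bA^n\to\bS_\ell$ with $\bigcap_\ell\ker f_\ell=\Theta_D$. Pick any $\vec v\in D$. Since $\Theta_D\subseteq\ker f_\ell$ and $D$ is a $\Theta_D$-class, $f_\ell$ is constant on $D$, so $f_\ell(D)=\{f_\ell(\vec v)\}$. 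A direct verification then gives $D=\bigcap_\ell f_\ell^{-1}(f_\ell(D))$: if $\vec x$ lies in every preimage, then $f_\ell(\vec x)=f_\ell(\vec v)$ for each $\ell$, so $(\vec x,\vec v)\in\bigcap_\ell\ker f_\ell=\Theta_D$, whence $\vec x\in D$.

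To each $f_\ell$ I apply Lemma~\ref{L:PetiteFactorisation} with target $\bS_\ell$. This requires $\langle\cH(\bA^2,\bS_\ell);+\rangle$ to be generated by at most $N-1=\max_i\alpha_i^3$ elements. Writing $|\bS_\ell|=\prod_i p_i^{\beta_i^{(\ell)}}$, each $\beta_i^{(\ell)}$ is controlled by $\alpha_i^2$ because $\bS_\ell$ is a subdirectly irreducible module over the ring $\bR$ from Lemma~\ref{L:SeparerConstantes}, whose size has prime exponents $\le\alpha_i^2$ by Corollary~\ref{C:ringsizeAbelian}; this is precisely the sort of bookkeeping made explicit in the appendix. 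Applying Corollary~\ref{C:BoundH} gives $\max_i\alpha_i\beta_i^{(\ell)}\le\max_i\alpha_i^3=N-1$, so Lemma~\ref{L:PetiteFactorisation} produces $p_\ell\colon\bA^n\to\bA^N$ that is a term in $t$ together with $q_\ell\colon\bA^N\to\bS_\ell$ satisfying $f_\ell=q_\ell\circ p_\ell$. Setting $\bC_\ell:=q_\ell^{-1}(f_\ell(D))$, which is a subalgebra of $\bA^N$ since $f_\ell(D)\le\bS_\ell$, I obtain $f_\ell^{-1}(f_\ell(D))=p_\ell^{-1}(\bC_\ell)$, which is exactly the domain of the composite $(\pi_1^N\res \bC_\ell)(p_{\ell,1},\dots,p_{\ell,N})\in\cF$, writing $p_\ell=(p_{\ell,1},\dots,p_{\ell,N})$.

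To conclude I iterate Lemma~\ref{L:ClonePartielDomaine}(2) over the finitely many indices $\ell$ to see that $D=\bigcap_\ell f_\ell^{-1}(f_\ell(D))$ is itself a domain of $\cF$, and then invoke Lemma~\ref{L:ClonePartielDomaine}(1) once more to get $\pi_1^n\res D\in\cF$. The main obstacle is the exponent bound $\beta_i^{(\ell)}\le\alpha_i^2$ on subdirectly irreducible quotients inside $\Var(\bA)$: it is what forces the choice $N=1+\max_i\alpha_i^3$ (one factor of $\alpha_i$ from $|\bA|$ in Corollary~\ref{C:BoundH}, the other $\alpha_i^2$ from the ring $\bR$), and is the only place where the appendix arithmetic is really indispensable.
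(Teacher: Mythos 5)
Your argument is correct and runs along essentially the same lines as the paper's proof, using the same key ingredients: $\Theta_D$ and the fact that $D$ is one of its classes, the factorization Lemma~\ref{L:PetiteFactorisation}, Corollary~\ref{C:BoundH}, the bound on subdirectly irreducible quotients, and Lemma~\ref{L:ClonePartielDomaine}. The only organizational difference is where the reduction to a single subdirectly irreducible target happens. The paper first restricts to \emph{completely meet-irreducible} subalgebras $\bD\le\bA^n$, so that $\bA^n/\Theta_{\bD}$ is already subdirectly irreducible by Lemma~\ref{L:MeetIrred} and a single application of Lemma~\ref{L:PetiteFactorisation} yields $D=p^{-1}(C)$; the passage to general $\bB$ is done afterwards by writing $B$ as an intersection of meet-irreducibles. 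You instead take an arbitrary $\bD$, subdirectly decompose $\bA^n/\Theta_D$ into finitely many SI factors $\bS_\ell$, factor each $f_\ell$ through $\bA^N$, and intersect the resulting preimages. These are dual views of the same reduction (meet-irreducible decomposition in $\Sub(\bA^n)$ versus subdirect decomposition of the quotient), and both are correct.

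One point where your citation should be tightened: to get $\beta_i^{(\ell)}\le\alpha_i^2$ you argue that ``$\bS_\ell$ is a subdirectly irreducible module over the ring $\bR$'' and invoke Corollary~\ref{C:ringsizeAbelian}, but $\bS_\ell$ is an Abelian algebra, not an $\bR$-module, so that reasoning is informal. The clean reference is Theorem~\ref{T:countSubIrred} (Kearnes's bound on SI algebras in $\Var\bA$), which is exactly what the paper uses and gives precisely the exponent bound $\alpha_i^2$ you need.
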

\begin{proof}
Let $n$ be a positive integer, let $\bD$ be a completely meet-irreducible subalgebra of $\bA^n$. Set $\bS=\bA^n/\Theta_{\bD}$, and denote by $f\colon\bA^n\to\bS$ the canonical projection.

By Lemma \ref{L:ThetaDiscr}, $\set D$ is the underlying set of a (one-element) subalgebra of $\bS$, moreover $x\in  D\Longleftrightarrow f(x)=D$. That is $f^{-1}(\set D)=D$.

As $\bD$ is completely meet-irreducible, by Lemma \ref{L:MeetIrred}, $\bS$ is subdirectly irreducible. By Theorem \ref{T:countSubIrred}, $\card S$ divides $p_1^{\alpha_1^2}\dots p_k^{\alpha_k^2}$.
 By Corollary \ref{C:BoundH} the group $\cH(\bA^2,\bS)$ has a generating family with $\max_{1\le i\le k}(\alpha_i^3)=N-1$ elements.

Therefore, by Lemma~\ref{L:PetiteFactorisation}, there exists a homomorphism $p\colon\bA^n\to\bA^N$, which is a term in $t$, and a homomorphism $q\colon\bA^N\to\bS$ such that $q\circ p=f$.

Set $C=q^{-1}(\set{D})=\setm{x\in A^N}{q(x)=D}$. As $\set D$ is the underlying set of a subalgebra of $\bS$ and $q$ is a homomorphism, it follows that $C$ is the underlying set of a subalgebra of $\bA^N$, hence $C$ is a domain of $\cF$. Moreover $p$ is a term of $t$, thus it follows from Lemma~\ref{L:ClonePartielDomaine}(4) that $p^{-1}(C)$ is a domain of $\cF$. However $p^{-1}(C)=p^{-1}(q^{-1}(\set D))=f^{-1}(\set D)=D$, so $D$ is a domain of $\cF$.

Let $\bB$ be an arbitrary subalgebra of $\bA^n$. We can write $B$ as the intersection of finitely many underlying sets of completely subdirectly irreducible subalgebras of $\bA^n$. Since each of these sets is a domain of $\cF$, it follows from Lemma~\ref{L:ClonePartielDomaine}(2) that $B$ is a domain of $\cF$. Therefore, by Lemma~\ref{L:ClonePartielDomaine}(3), $\pi_1^n\res B$ is in $\cF$.
\end{proof}

\section{Extensions of Partial homomorphisms} \label{S:Extensions}

The results of Corollary \ref{C:CloneDomaineFinimentEngendre} imply that we may generate a partial homomorphism from its extension to a larger domain and a bounded number of partial projections.
Thus, the goal of this section is to extend partial homomorphisms of Abelian algebras (in a finitely generated variety of Abelian algebras). We will show that if the domain of a partial homomorphism is small enough, then the partial homomorphism has a proper extension (cf. Lemma~\ref{L:ExtensionsMorphismesAffine}). We will first establish this result for modules before generalizing to Abelian algebras.

\begin{lemma}\label{L:ExtensionPairesMorphismes}
Let $\bB,\bC$ be submodules of a module $\bA$. Let $\bE$ be a module. Let $f\colon \bB\to \bE$ and $g\colon \bC\to \bE$ be homomorphisms. If $f\res B\cap C=g\res B\cap C$, then there exists a homomorphism $h\colon \bB+\bC\to \bE$ that is a common extension of $f$ and $g$.
\end{lemma}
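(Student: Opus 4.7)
The plan is to give the standard direct construction of a common extension. Every element of $\bB+\bC$ can be written (non-uniquely) as $b+c$ with $b \in B$ and $c \in C$, so I would define
\[
h\colon \bB + \bC \to \bE, \qquad h(b+c) = f(b) + g(c),
\]
and then verify (i) well-definedness, (ii) that $h$ is a module homomorphism, and (iii) that $h$ extends both $f$ and $g$.

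The main (and essentially only) obstacle is well-definedness: if $b_1 + c_1 = b_2 + c_2$ with $b_1, b_2 \in B$ and $c_1, c_2 \in C$, one must check $f(b_1) + g(c_1) = f(b_2) + g(c_2)$. The key observation is $b_1 - b_2 = c_2 - c_1$, and this common value lies in $B \cap C$. The hypothesis $f\res{B\cap C} = g\res{B\cap C}$ then yields
\[
f(b_1 - b_2) = f\bigl((b_1 - b_2)\bigr) = g\bigl((c_2 - c_1)\bigr) = g(c_2 - c_1),
\]
and rearranging via the additive structure gives the required equality. Step (ii) is then immediate from the fact that $f$ and $g$ are each $\bR$-linear and that the operations of $\bE$ are computed coordinatewise in the defining formula. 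Step (iii) follows by taking $c = 0$ or $b = 0$, using that $f$ and $g$ send $0$ to $0$.

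Since the entire argument only uses the group operation, scalar multiplication, and the agreement of $f$ and $g$ on the intersection, no delicate commutator-theoretic machinery is needed here; the lemma is a standard pushout fact for modules and the proof should be short. In the subsequent lemma (\textbf{ExtensionsMorphismesAffine}) this module statement will presumably be lifted to the Abelian case by passing through Lemma~\ref{L:SeparerConstantes} and Lemma~\ref{L:AjouterConstantes}, but that transfer is outside the scope of the present statement.
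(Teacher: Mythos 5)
Your construction, the well-definedness argument via $b_1 - b_2 = c_2 - c_1 \in B \cap C$, and the verification that $h$ extends $f$ and $g$ by setting $c=0$ or $b=0$ are exactly the paper's proof. Correct and same approach.
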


\begin{proof}
Let $b,b'\in B$ and $c,c'\in C$. Assume that $b+c=b'+c'$. Then $b-b'=c'-c$ belongs to $B\cap C$, hence
\begin{equation*}
f(b)-f(b')=f(b-b') = g(b-b')=g(c'-c)=g(c')-g(c)\,.
\end{equation*}
Therefore $f(b) + g(c) = f(b')+ g(c')$. It follows that the map
\begin{align*}
h\colon B+C &\to E\\
b+c & \mapsto f(b)+g(c)\,,
\end{align*}
is well-defined, and is a homomorphism of modules. Moreover $h(b)=h(b+0)=f(b)+g(0)=f(b)$ for all $b\in B$. Similarly $h$ extends $g$.
\end{proof}

\begin{lemma}\label{L:BorneUniformeSousAlgebres}
Let $\cV$ be a locally finite variety of algebras. Let $\bA\in\cV$. Let $n$ be a positive integer. If each finitely generated subalgebra of $\bA$ is generated by $n$ elements, then $\bA$ is finite.
\end{lemma}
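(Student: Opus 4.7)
The plan is to argue via the size of the free algebra on $n$ generators. Let $F_\cV(n)$ be the algebra freely generated by $n$ elements in $\cV$, and set $m=\card{F_\cV(n)}$; this is finite by local finiteness of $\cV$.

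The first step is to observe that every subalgebra of $\bA$ generated by at most $n$ elements has cardinality bounded by $m$. This is immediate from the universal property of the free algebra: such a subalgebra is a homomorphic image of $F_\cV(n)$ via the map sending the free generators to any chosen generating tuple.

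The second step is the contrapositive. Suppose, for a contradiction, that $\card A>m$. Then one can pick pairwise distinct elements $a_1,\dots,a_{m+1}\in A$, and consider the subalgebra $\bB=\gen{a_1,\dots,a_{m+1}}$. This $\bB$ is finitely generated, so by hypothesis it admits a generating set of size at most $n$; by the first step, $\card B\le m$. But $\bB$ contains the $m+1$ distinct elements $a_1,\dots,a_{m+1}$, which is a contradiction. Hence $\card A\le m<\infty$.

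There is essentially no obstacle here: the argument is a one-line application of local finiteness plus the pigeonhole-style observation that a uniform bound on the number of generators forces a uniform bound on cardinality. The only mild subtlety is parsing the hypothesis ``generated by $n$ elements'' as meaning ``generated by at most $n$ elements'' (the conclusion would fail if it were read as ``exactly $n$''), but this is the natural reading and is consistent with how the lemma will be used.
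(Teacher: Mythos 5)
Your proof is correct and follows essentially the same route as the paper: both pick $\card{F_\cV(n)}+1$ distinct elements, observe the subalgebra they generate has more than $\card{F_\cV(n)}$ elements, yet being generated by $n$ elements it must have at most $\card{F_\cV(n)}$ elements, a contradiction. The only cosmetic difference is that the paper phrases the contradiction via assuming $\bA$ infinite, while you directly bound $\card A$ by $\card{F_\cV(n)}$.
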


\begin{proof}
Assuming that $\bA$ is infinite, there is an infinite sequence $(x_i)_{i\in\mathbb{N}}$ of distinct elements of $\bA$. Let  $k = \card{\bF_\cV(n)}$.

Denote by $\bB$ the subalgebra of $\bA$ generated by $\set{x_0,x_1,x_2,\dots,x_k}$. Note that $\card B\ge k+1$, but $\bB$ is finitely generated, so is generated by $n$ elements, hence $\card B\le\card{F_\cV(n)}=k$, a contradiction.
\end{proof}

\begin{lemma}\label{L:famillegeneratrice}
Let $\bA$ be an Abelian group such that $\card A=p_1^{\alpha_1}\dots p_k^{\alpha_k}$ where $p_1,\dots,p_k$ are distinct primes. Set $N=1+\alpha_1+\dots+\alpha_k$. Let $a_1,\dots,a_N \in A$. Then there are  integers $u_1,\dots,u_{i-1}$ for  some $i$ with $1\le i\le N $, such that $a_i=\sum_{j=1}^{i-1} u_ja_j$.
\end{lemma}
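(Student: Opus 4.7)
The plan is to study the ascending chain of subgroups generated by successive prefixes of the sequence. For $0\le i\le N$, set $H_i=\gen{a_1,\dots,a_i}\le\bA$ (with $H_0=\set 0$), yielding a weakly increasing chain
$$H_0\le H_1\le\dots\le H_N.$$
If this chain ever fails to be strict, say $H_{i-1}=H_i$ at some step $i$, then $a_i\in H_{i-1}$, which is exactly the desired conclusion: $a_i$ is a $\mathbb Z$-linear combination of $a_1,\dots,a_{i-1}$. So the entire proof reduces to ruling out the case where all $N$ inclusions are strict.

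To rule this out, I would use the elementary fact that in a finite Abelian group of order $\prod_j p_j^{\alpha_j}$, any strictly ascending chain of subgroups has length at most $\sum_j \alpha_j=N-1$. The argument is just prime-counting: for each strict step $H_{i-1}<H_i$, the index $|H_i|/|H_{i-1}|$ is an integer $>1$ dividing $|A|$, so it can be written as $\prod_j p_j^{e_{ij}}$ with $\sum_j e_{ij}\ge 1$. Multiplying over all steps,
$$\prod_{i=1}^N\frac{|H_i|}{|H_{i-1}|}=|H_N|\ \text{ divides }\ |A|=\prod_j p_j^{\alpha_j},$$
so $\sum_i e_{ij}\le\alpha_j$ for each $j$, and therefore the number of strict steps is bounded by $\sum_{i,j} e_{ij}\le\sum_j\alpha_j = N-1$. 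Hence at least one of the $N$ steps in our chain must be non-strict, producing the index $i$ we need.

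There is essentially no obstacle here; the only substantive content is the length bound for subgroup chains, which is a routine order-divisibility argument (equivalently, the Jordan--Hölder length of a finite Abelian group equals the number of prime factors of its order counted with multiplicity). Note that the argument does not require anything beyond the Abelian group structure of $\bA$, which is why the statement is phrased in purely group-theoretic terms even though it will later be applied to the additive reducts of Abelian algebras.
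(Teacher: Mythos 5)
Your proof is correct and follows essentially the same route as the paper: both build the ascending chain of subgroups generated by initial segments of $a_1,\dots,a_N$ and invoke the bound that a strictly ascending chain of subgroups in a group of order $\prod_j p_j^{\alpha_j}$ has at most $N-1$ strict steps. You simply spell out the order-divisibility argument for that bound, which the paper leaves implicit.
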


\begin{proof}
Set $A_0=\set 0$. Given $1\le i\le N$, denote by $\bA_i$ the subgroup of $\bA$ generated by $\set{a_1,\dots,a_i}$. Note that $\bA_i$ is a subgroup of $\bA_j$ for $0\le i\le j\le N$.

As a maximal chain of subgroups of $\bA$ has size at most $N$, it follows that there is $1\le i\le N$ such that $A_i=A_{i-1}$. Therefore $a_i\in A_{i-1}$, so there are integers $u_1,\dots,u_{i-1}$ such that $a_i=\sum_{j=1}^{i-1} u_ja_j$.
\end{proof}

\begin{lemma}\label{L:ExtensionsMorphismesModulesI}
Let $\cV$ be a locally finite variety of $\bR$-modules, and $\bE \in \cV$ finite. Then there exists a positive integer $N$ such that, given modules $\bB\subseteq \bC$ in $\cV$ and a homomorphism $f\colon \bB\to \bE$, if $\bC/\bB$ is not generated by $N$ elements, then $f$ has a proper extension.

Moreover, let $\card R=p_1^{r_1}\dots p_k^{r_k}$, and $\card E=p_1^{\beta_1}\dots p_k^{\beta_k}$, where the $p_i$ are distinct primes, and assume that $\bR$ as an $\bR$-module has $\ell$ strict non-trivial submodules. Then we can pick $N = \ell\times \left(\sum_{i=1}^k r_i\beta_i -1\right)$.
\end{lemma}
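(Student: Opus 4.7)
The plan is to prove the contrapositive: if $f\colon\bB\to\bE$ has no proper extension inside $\bC$, then $\bC/\bB$ is generated by at most $N$ elements.

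Fix $c\in C\setminus B$ and set $I_c=\setm{r\in R}{rc\in B}$, a left ideal of~$\bR$. Two preliminary observations drive the argument. First, $I_c$ must be a strict non-trivial submodule of~$\bR$: $I_c\ne R$ since $c\notin B$, and if $I_c=\set0$ then the rule $b+rc\mapsto f(b)$ is well-defined (since $b+rc=b'+r'c$ with $r\ne r'$ would force $(r-r')c\in B$, contradicting $I_c=\set0$) and yields a proper extension of~$f$ to $\bB+\bR c$, contrary to hypothesis. Second, the $\bR$-homomorphism $\bar\phi_c\colon I_c\to\bE$, $r\mapsto f(rc)$, cannot extend to~$\bR$: by Lemma~\ref{L:ExtensionPairesMorphismes} applied to the submodules $\bB$ and $\bR c$, any extension $\psi\colon\bR\to\bE$ would combine with~$f$ to give the proper extension $b+rc\mapsto f(b)+\psi(r)$.

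Now take a minimal generating set $\set{c_1,\dots,c_M}$ of $\bC/\bB$ with $M>N$. Since each $I_{c_i}$ lies among the $\ell$ strict non-trivial submodules of $\bR$, pigeonhole produces a strict non-trivial $I\le\bR$ and indices $i_1<\dots<i_k$ with $I_{c_{i_j}}=I$ and $k$ sufficiently large (arranged by the choice of $N$). Let $\mathrm{Ext}(I)$ denote the image of the restriction $\bE\cong\mathrm{Hom}_\bR(\bR,\bE)\to\mathrm{Hom}_\bR(I,\bE)$, and set $H_I=\mathrm{Hom}_\bR(I,\bE)/\mathrm{Ext}(I)$. Since $\mathrm{Hom}_\bR(I,\bE)$ is a finite abelian group whose order is controlled in terms of the primes dividing $\card R$ and $\card E$, Lemma~\ref{L:famillegeneratrice} furnishes integers $u_s$ with $\bar\phi_{c_{i_j}}\equiv\sum_{s<j}u_s\bar\phi_{c_{i_s}}\pmod{\mathrm{Ext}(I)}$.

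Setting $d:=c_{i_j}-\sum_{s<j}u_sc_{i_s}$, one checks that $I\cdot d\subseteq B$ and $\bar\phi_d\res I\in\mathrm{Ext}(I)$, so $\bar\phi_d\res I$ extends to $\bR$. Three cases arise. (a) If $I_d=I$, then $\bar\phi_d$ itself extends, contradicting the second observation applied to~$d$ and yielding a proper extension of~$f$ to $\bB+\bR d$. (b) If $I_d=R$, then $d\in B$, so $c_{i_j}$ lies in the $\bR$-submodule generated modulo $\bB$ by the other $c_{i_s}$, contradicting minimality of the generating set. (c) If $I\subsetneq I_d\subsetneq R$, replace $c_{i_j}$ by~$d$, moving this generator to a strictly larger annihilator stratum; since the poset of strict non-trivial submodules of $\bR$ has height at most $\ell$, repeated application of~(c) terminates after finitely many steps, eventually forcing case~(a) or~(b).

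The principal technical obstacle lies in the quantitative bookkeeping needed to secure the sharp bound $N=\ell(\sum r_i\beta_i-1)$ rather than the cruder $\ell(1+\sum r_i\beta_i)$ that a naive pigeonhole combined with direct application of Lemma~\ref{L:famillegeneratrice} to $\mathrm{Hom}_{\mathbb{Z}}(I,\bE)$ would give. The saving of one unit of chain length per stratum must be extracted from the combination of the strictness $I\ne R$ and the passage to the quotient $H_I$, and one must check that the generator movements in case~(c) do not inflate the per-stratum counts beyond $\sum r_i\beta_i-1$. The existence part of the lemma (without the explicit bound) is an immediate consequence of the same argument with this cruder estimate.
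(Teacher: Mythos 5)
Your proposal follows the same skeleton as the paper's proof — annihilator ideals of generators, observing they are strict and nontrivial, pigeonhole on those ideals, integer combination to produce a replacement element, and Lemma~\ref{L:ExtensionPairesMorphismes} to glue — but diverges in two significant ways, one of which leaves an acknowledged gap.

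The paper handles the troublesome case $I_d\supsetneq I$ by a maximality trick rather than by iteration. Working inside a suitable finitely generated submodule $\bP$ of $\bC$ over $\bB$, the authors choose the generators $x_1,\dots,x_k$ so that the tuple $(\varphi_{x_i}^{-1}(B))_i$ is maximal among all generating sequences of $\bP$ over $\bB$. After forming $y=x_i-\sum u_jx_j$ the inclusion $\varphi_y^{-1}(B)\supseteq G$ is then automatically an equality, so your case~(c) simply never arises. This eliminates in one stroke the termination bookkeeping you flag as the principal obstacle — no re-running of the pigeonhole, no worries about how strata shift when a generator is replaced. Your iterative version is plausibly repairable (the multiset of ideals is strictly improved in a finite poset each time case~(c) fires, and $M>N$ is preserved throughout), but the paper simply does not need it, and you have not carried the bookkeeping out.

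The second divergence is your introduction of $H_I=\Hom_\bR(I,\bE)/\mathrm{Ext}(I)$ and the belief that passing to this quotient is what saves the ``$+1$'' in the bound. That is not where the paper's saving comes from. The paper applies Lemma~\ref{L:famillegeneratrice} to $\Hom(\bG,\bE)$ itself, and gets $N_{\bG}=\sum r_i\beta_i$ (instead of the naive $1+\sum r_i\beta_i$) because $\bG$ is a \emph{strict} submodule of the free module $\bF\cong\bR$, so $\card{\Hom(\bG,\bE)}$ divides $\prod p_i^{r_i\beta_i}$ \emph{strictly}, which knocks one off the exponent sum in Lemma~\ref{L:famillegeneratrice}. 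Working in $\Hom(\bG,\bE)$ directly also has the pleasant side effect that the integer combination makes $f\circ\varphi_y\res G$ identically zero, so the extension is simply the one that is zero on $\varphi_y(F)$ — again cleaner than tracking extendability modulo $\mathrm{Ext}(I)$, which is what forced you into the case analysis in the first place. Your quotient $H_I$ is a genuine further refinement (it can only make $N$ smaller), but it is not what secures the stated bound, and using it obliges you to argue that an element of $\mathrm{Ext}(I)$ restricted from the wrong ideal still produces an extension — the source of your case~(a)/(c) split.

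In short: the strategy is correct in outline and recognisably close to the paper's, but the paper replaces your open-ended iteration with a one-shot maximality argument, and obtains the bound from strictness of divisibility rather than from a quotient of Hom-groups. The gap you acknowledge is real; it is exactly what the maximality device is there to close.
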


\begin{proof}
Denote by $\bF$ the $\bR$-module freely generated by $\set{u}$, so that $\bF\cong \bR$ as $\bR$-modules.

 Given a strict submodule $\bG$ of $\bF$, denote by $N_{\bG}$ a positive integer with the following property: given $\varphi_1,\dots,\varphi_{N_{\bG}}\in \Hom(\bG,\bE)$, there are integers $u_1,\dots,u_{i-1}$ for some $i$ with $1\le i\le N_{\bG}$, such that $\varphi_i=\sum_{j=1}^{i-1} u_j\varphi_j$.
As $\bG$ is a strict sub-module of $\bF$, it follows from Lemma~\ref{L:Majorations} that $\card{\Hom(\bG,\bE)}$ divides $p_1^{r_1\beta_1}\dots p_k^{r_k\beta_k}$ strictly, hence by Lemma~\ref{L:famillegeneratrice}, we may choose $N_G=\sum_{i=1}^{k} r_i\beta_i$.
Set
\begin{equation}\label{eq:N}
N=\sum\famm{N_{\bG}-1}{\bG\text{ is a nontrivial strict submodule of } \bF}\,.
\end{equation}
Note that $N\le \ell\times (\sum_{i=1}^{k} r_i\beta_i -1)$.

Let $\bB\subseteq \bC$ in $\cV$ and $f\colon \bB\to \bE$ be a homomorphism. Assume that $\bC/B$ is not generated by $N$ elements.

First note that if all finitely generated submodules of $\bC/B$ are generated by $N$ elements, then it follows from Lemma~\ref{L:BorneUniformeSousAlgebres} that $\bC/B$ is generated by $N$ elements, which contradicts the assumption. Therefore there is a finitely generated submodule $\bQ$ of $\bC/B$, whose minimal number of generators is $k\ge N+1$.

Let $\bP$ the submodule of $\bC$ containing $\bB$ such that $\bP/B=\bQ$. Note that $\set{x_1+B,x_2+B,\dots,x_k+B}$ generates~$\bQ$ if and only if $B\cup\set{x_1,\dots,x_k}$ generates~$\bP$. We say that $x_1,\dots,x_k$ \emph{generate $\bP$ over~$\bB$}.

Given $x\in C$ we denote by $\varphi_x\colon \bF\to \bC$ the unique homomorphism that maps $u$ to $x$. Note that $\varphi_x+\varphi_y=\varphi_{x+y}$ for all $x,y\in C$.

Pick $x_1,\dots,x_k \in P \setminus B$, generating $\bP$ over $\bB$, such that $(\varphi_{x_i}^{-1}(B))_{1\le i\le k}$ is maximal. That is, if $y_1,\dots,y_k$ generate $\bP$ over $\bB$ and $\varphi_{y_i}^{-1}(B)\supseteq \varphi_{x_i}^{-1}(B)$ for all $1\le i\le k$, then $\varphi_{y_i}^{-1}(B)=\varphi_{x_i}^{-1}(B)$ for all $1\le i\le k$. The existence of such a sequence follows from the finiteness of $\Sub\bF$.

Set $S_i=\varphi_{x_i}^{-1}(B)$. That is, $\bS_i$ is the largest submodule of $\bF$ such that $\varphi_{x_i}(S_i)\subseteq B$, for each $1\le i\le k$. Note that $\varphi_{x_i}(F)\cap B=\varphi_{x_i}(S_i)$.

Assume that $S_i=\set 0$ for some $1\le i\le k$. Then $\varphi_{x_i}(F)\cap B=\set 0$. Let $g\colon\varphi_{x_i}(F)\to\bE$, $x\mapsto 0$. Note that $\dom f\cap \dom g=\set{0}$, hence it follows from Lemma~\ref{L:ExtensionPairesMorphismes} that there exist a morphism $h\colon B+\varphi_{x_i}(F)$ that extends both $f$ and $g$. As $x_i\not\in B$, it follows that $h$ is a proper extension of $f$.

We now assume that $S_i\not=\set{0}$ for each $1\le i\le k$.

\begin{claim}
The $\bS_i$ are strict submodules of $\bF$, for all $1\le i\le n$.
\end{claim}

\begin{cproof}
Assume we have $i$ such that $S_i=F$. Hence $u$, the generator of $\bF$, belongs to $\bS_i$, so $x_i=\varphi_{x_i}(u)\in \varphi_{x_i}(S_i)\subseteq B$, contradicting that $x_i \notin B$.
\end{cproof}

As $k\ge N+1>N$ and the $\bS_i$ are strict nontrivial submodules of $\bF$, it follows  that there are a submodule $\bG$ of $\bF$, $\bG$ strict and non-trivial, and $I\subseteq\set{1,\dots,k}$, such that $\card I=N_{\bG}$ and $S_i=G$, for all $i\in I$.

Set $\psi_i=f\circ\varphi_{x_i}\res G$. As $\card I=N_{\bG}$, there is $i\in I$
and a family of integers $(u_j)_{j\in J}$ (where $J=I\setminus\set i$) such that
\begin{equation*}
\psi_i=\sum_{j\in J}u_j\psi_j.
\end{equation*}
Let $y=x_i-\sum_{j\in J}u_j x_j$. Note that $x_1,\dots,x_{i-1},y,x_{i+1},\dots, x_k$ generates $\bP$ over $\bB$. Moreover
\begin{equation*}
\varphi_y(s)=\varphi_{x_i-\sum_{j\in J}u_j x_j}(s)=\varphi_{x_i}(s) - \sum_{j\in J} u_j\varphi_{x_j}(s)\in B\,,\quad\text{for all $s\in G$}
\end{equation*}
thus $\varphi_{y}^{-1}(B)\supseteq G=S_i$. It follows from the maximality of $(S_1,\dots,S_k)$ that $\varphi_{y}^{-1}(B)=S_i=G$.

Let $z\in \varphi_{y}(F)\cap B$, and take $s\in G$ such that $\varphi_{y}(s)=z$. The following equalities hold
\begin{align*}
f(z)&=f(\varphi_{y}(s))\\
&=f\left(\varphi_{x_i}(s)-  \sum_{j\in J}u_j\varphi_{x_j}(s)\right)\\
&=f(\varphi_{x_i}(s))- \sum_{j\in J}u_jf(\varphi_{x_j}(s))\\
&=\psi_i(s)-\sum_{j\in J}u_j\psi_j(s)\\
&=0
\end{align*}

Denote by $g\colon \varphi_{y}(F)\to E$ the constant 0 morphism. We have
\begin{equation*}
f\res \varphi_{y}(F)\cap B = 0= g\res \varphi_{y}(F)\cap B\,.
\end{equation*}
Set $D=B+\varphi_{y}(F)$. It follows from Lemma~\ref{L:ExtensionPairesMorphismes} that $f$ and $g$ have a common extension $h\colon \bD\to \bE$.

Note that $y=\varphi_{y}(u)\in D$ and $y\not\in B$, so $h$ is a strict extension of $f$.
\end{proof}

\begin{lemma}\label{L:ExtensionsMorphismesAffine}
Let $\cV$ be a locally finite variety of affine algebras. Let $\bE$ be a finite algebra in $\cV$. Then there exists a positive integer $N$ such that, given algebras $\bB\subseteq \bC$ in $\cV$ and a homomorphism $f\colon \bB\to \bE$, if
$\bC/\Theta_B$ is not generated by $N$ elements, then $f$ has a proper extension.
\end{lemma}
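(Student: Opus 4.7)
The plan is to reduce to the module case already treated in Lemma~\ref{L:ExtensionsMorphismesModulesI} by fixing a base point $b\in B$. First I would apply Lemma~\ref{L:AjouterConstantes} to expand $\cV$ to the locally finite affine variety $\cV_c$ and then Lemma~\ref{L:SeparerConstantes} to pass to the term-equivalent variety $\cV'_c$ in the language $\sL'\cup\sC$, whose $\sL'$-reduct is a variety of $\bR$-modules. Interpreting $c$ as $b$ in $\bB$ and $\bC$ and as $f(b)$ in $\bE$ yields $(\sL'\cup\sC)$-algebras $\bB'_b\subseteq \bC'_b$ and $\bE'_{f(b)}$; by Lemma~\ref{L:AjouterConstantes}(1), $f$ becomes a morphism $\bB'_b\to \bE'_{f(b)}$ in $\cV'_c$, hence in particular a morphism of the underlying $\bR$-modules.

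Let $N'$ denote the bound from Lemma~\ref{L:ExtensionsMorphismesModulesI} applied to the (locally finite) $\sL'$-reduct of $\cV'_c$ and the finite module $\bE'_{f(b)}$. Since the map $x\mapsto t(x,f(b),v)$ is a module isomorphism $\bE'_{f(b)}\to \bE'_v$ for any $v\in E$, the relevant isomorphism type is independent of $b$, so $N'$ is uniform. I propose to set $N:=N'+1$. The key observation for comparing the two generation hypotheses is that every $\sC$-constant of $\bC'_b$ lies in $B$: a nullary symbol $d\in\sL$ has the same interpretation in $\bB$ and in $\bC$, while each $g(c,\dots,c)^{\bC'_b}$ equals $g^{\bB}(b,\dots,b)\in B$. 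Consequently every $\sC$-constant of $\bC'_b/\bB'_b$ collapses to the module zero, so submodules and $(\sL'\cup\sC)$-subalgebras of the quotient coincide. Via term-equivalence, a module-generating set $X$ of $\bC'_b/\bB'_b$ becomes an $\sL$-generating set of $\bC/\Theta_B$ after adjoining the class of $b$. Thus if $\bC/\Theta_B$ is not $\sL$-generated by $N=N'+1$ elements, then $\bC'_b/\bB'_b$ is not module-generated by $N'$ elements.

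Lemma~\ref{L:ExtensionsMorphismesModulesI} then produces a module $\bD$ with $\bB'_b\subsetneq \bD\le \bC'_b$ and a module morphism $h\colon \bD\to \bE'_{f(b)}$ extending $f$. The hard part — really the only step beyond the module lemma that needs checking — is to promote $h$ to a morphism of $\sL$-algebras. The observation above does this: because $\bB'_b$ already contains every $\sC$-constant of $\bC'_b$, so does $\bD$, making $\bD$ a full $(\sL'\cup\sC)$-subalgebra; and since $h$ extends $f$, which preserves every $\sC$-constant (as an $\sL$-morphism), $h$ itself preserves them. So $h$ is a morphism in $\cV'_c$, hence by term-equivalence an $\sL\cup\set{c}$-morphism; forgetting $c$ yields the desired proper $\sL$-extension of $f$ inside $\bC$ that strictly contains $\bB$.
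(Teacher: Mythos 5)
Your proposal is correct and follows the same strategy as the paper's proof: introduce a new constant $c$ via Lemma~\ref{L:AjouterConstantes}, pass to the term-equivalent module language via Lemma~\ref{L:SeparerConstantes}, verify that all $\sC$-constants of $\bC'_b$ already lie in $B$ (so that the module quotient $\bC'_b/\bB'_b$ coincides with $\bC/\Theta_B$ and a module extension of $f$ pulls back to an $\sL$-extension), and feed the generation hypothesis into Lemma~\ref{L:ExtensionsMorphismesModulesI}. The only genuine deviation is how you make the module bound uniform: the paper simply sets $N=1+\max\setm{N_y}{y\in E}$, whereas you observe that $x\mapsto t(x,u,v)$ is an $\bR$-module isomorphism $\bE'_u\to\bE'_v$, so the $\bE'_v$ are all isomorphic and the bound $N'$ does not depend on $f(b)$. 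That observation is true (it follows from the fact that the binary ring terms $\bar\lambda$ are compatible with $t$ and idempotent, as used in the proof of Corollary~\ref{C:ringsizeAbelian}), and it is a pleasant simplification—though strictly unnecessary, since the explicit formula for the module bound in Lemma~\ref{L:ExtensionsMorphismesModulesI} only depends on the cardinalities of $\bR$ and $\bE$. If you were to write this out formally, the one place to flesh out is the justification that the translation really is a module isomorphism (it is not an $\sL$-endomorphism of $\bE$, so Lemma~\ref{L:AjouterConstantes}(1) does not apply directly); the idempotence $\bar\lambda(y,y)\approx y$ of the ring terms is the needed ingredient.
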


\begin{proof}
Denote by $\sL$ the similarity type of $\cV$. Let $c$ be a constant symbol that is not in $\sL$.

We consider $\cV_c$, as defined in Lemma~\ref{L:AjouterConstantes}. Note that $\cV_c$ is a locally finite variety of affine algebras, moreover there is a constant operation. It follows from Lemma~\ref{L:SeparerConstantes} that there is a similarity type $\sL'\cup\sC$ satisfying (1)-(5) of Lemma~\ref{L:SeparerConstantes}.

We denote by $\cW$ the class of all reducts of algebras in $\cV_c$, to the type $\sL'$. So $\cW$ is a variety of modules.

Let $E$ be a finite algebra in $\cV$. Given $y\in E$, we consider its reduct $E_y(\sL')\in\cW$ and take $N_y$ as in (\ref{eq:N}), with regard to the variety $\cW$. Set $N=1+\max\setm{N_y}{y\in E}$.

Let $B\subseteq C$ in $\cV$. Let $f\colon B\to E$ be a homomorphism. Assume that $C/\Theta_B$ is not generated by $N$ elements. Pick $x\in B$. Note that $(C/\Theta_B)_{[x]_{\Theta_B}}(\sL\cup\set c)$ is not generated by $N-1$ elements. It follows that $C_x(\sL')/B_x(\sL')=(C/B)_{x/B}(\sL')$ is not generated by $N-1$ elements.

The map $f\colon B_x\to E_{f(x)}$ is a homomorphism of $\sL\cup\set{c}$ algebras, and so is a homomorphism of $\sL'\cup\sC$-algebras. However, since $N-1\ge N_{f(x)}$, there exists $h\colon D(\sL')\to E_{f(x)}(\sL')$ where $D(\sL')$ is a subalgebra of $C_x(\sL')$ properly containing $B$, and so $h$ is a homomorphism of $\sL'$-algebras that strictly extends $f$.

Since $B$ contains all the constants operations in $\sC$,  $D(\sL')$ contains the constants. Hence we can consider $D(\sL'\cup\sC)$; moreover, as $h$ extends $f$, it follows that $h$ is a morphism of $\sL'\cup\sC$-algebras, and so is a homomorphism of $\sL$-algebras. Therefore $f$ has a proper extension (as homomorphism of $\sL$-algebras).
\end{proof}

The following corollary is an immediate consequence of Lemma~\ref{L:ExtensionsMorphismesAffine}. Informally the domain of a non-extensible homomorphism of Abelian algebras is large.
\begin{corollary}\label{C:MorphismesMaximaux}
Let $\cV$ be a locally finite variety of affine algebras. Let $E$ be a finite algebra in $\cV$. Then there exists a positive integer $N$ such that, given algebras $B\subseteq C$ in $\cV$ and $f\colon B\to E$, if $f$ has no proper extension, then $C/\Theta_B$ is generated by $N$ elements, and so is finite.
\end{corollary}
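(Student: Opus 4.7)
The statement is essentially the contrapositive of Lemma~\ref{L:ExtensionsMorphismesAffine}, with an additional observation about finiteness, so the plan is short.

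First, I would take $N$ to be exactly the positive integer supplied by Lemma~\ref{L:ExtensionsMorphismesAffine} for the given variety $\cV$ and finite algebra $E$. Suppose then that $B\subseteq C$ are in $\cV$ and $f\colon B\to E$ is a homomorphism with no proper extension. The contrapositive of Lemma~\ref{L:ExtensionsMorphismesAffine} says that $\bC/\Theta_B$ must be generated by $N$ elements, since otherwise Lemma~\ref{L:ExtensionsMorphismesAffine} would produce an extension of $f$ to some $\bD$ with $\bB<\bD\le\bC$, contradicting the hypothesis. This gives the first half of the conclusion.

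For the ``so is finite'' clause, I would observe that $\bC/\Theta_B$ is a quotient of $\bC\in\cV$ and hence lies in $\cV$. Since $\cV$ is locally finite and $\bC/\Theta_B$ is generated by (at most) $N$ elements, it follows that $\bC/\Theta_B$ is finite, in fact of cardinality at most $\card{F_\cV(N)}$. This completes the proof.

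There is no real obstacle: the corollary is a direct contrapositive reformulation of the preceding lemma packaged together with the trivial consequence of local finiteness. The genuine content (and the only delicate step) lies entirely in Lemma~\ref{L:ExtensionsMorphismesAffine}, which has already been established; here one merely has to record that the bound $N$ transports verbatim and that finite generation in a locally finite variety entails finiteness.
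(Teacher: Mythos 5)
Your proof is correct and matches the paper's intent: the paper states the corollary as an immediate consequence of Lemma~\ref{L:ExtensionsMorphismesAffine} without writing out the contrapositive or the local-finiteness remark, and your argument fills in exactly those two routine steps.
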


\section{Factoring partial homomorphisms} \label{S:factorPartial}

The main goal of this section is to factorize a partial homomorphism $f\colon \bC\to\bE$ (where $\bC\subseteq\bA^n$) through a smaller power $\bD\subseteq\bA^N$, where $N$ only depends on $\bA$ and $\bE$. This will allow us to use Theorem \ref{T:enoughTAO} and to prove our main result.

First note that Abelian algebras have the congruence extension property. To be more precise we give the following description of extensions of congruences.

\begin{lemma}\label{L:ExtensionCongruences}
Let $\bA$ be a subalgebra of an Abelian algebra $\bB$. Let $\alpha$ be a congruence of $\bA$. Then there exists a smallest extension of $\alpha$ to $\bB$. It is the unique congruence $\beta$ of $\bB$ satisfying the following conditions.
\begin{enumerate}
\item For all $(x,y)\in\beta$, if $y\in A$, then $x\in A$.
\item $\beta\cap A^2=\alpha$.
\end{enumerate}
\end{lemma}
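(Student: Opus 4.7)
The plan is to write $\beta$ down explicitly in terms of the affine term $t$ and then verify every claim by $t$-identities, combined with $\alpha$ being a congruence and $t$ being a homomorphism. Fix some $a_0\in A$ (if $A=\emptyset$, take $\beta$ to be the diagonal of $B$, which vacuously satisfies both conditions). Let $V=[a_0]_\alpha$ and set
\[
\beta=\{(x,y)\in B^2\mid t(x,y,a_0)\in V\}.
\]
Informally, this transfers the submodule corresponding to $\alpha$ (with origin $a_0$) from $\bA$ to $\bB$.

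The first step is to check that $\beta$ is a congruence of $\bB$. Reflexivity is $t(x,x,a_0)=a_0\in V$; symmetry and transitivity reduce to the identities $t(y,x,a_0)=t(a_0,t(x,y,a_0),a_0)$ and $t(x,z,a_0)=t(t(x,y,a_0),a_0,t(y,z,a_0))$ combined with $\alpha$-compatibility of $t$ on $\bA$. The only nontrivial point is compatibility of $\beta$ with an arbitrary basic $n$-ary operation $f$ of $\bB$: setting $b_0=f(a_0,\dots,a_0)\in A$ and using the homomorphism property of $t\colon\bB^3\to\bB$, one obtains
\[
t(f(\vec x),f(\vec y),b_0)=f(t(x_1,y_1,a_0),\dots,t(x_n,y_n,a_0))\in A,
\]
which is $\alpha$-congruent to $b_0$; applying $t(-,b_0,a_0)$ inside $\alpha$ and using the identity $t(t(a,b,c),c,d)=t(a,b,d)$ then yields $t(f(\vec x),f(\vec y),a_0)\in V$, as required.

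Properties (1) and (2) follow quickly once $\beta$ is known to be a congruence. For (2), the inclusion $\alpha\subseteq\beta\cap A^2$ is obtained by applying $t(-,-,a_0)$ to the pairs $(x,y),(y,y)\in\alpha$; the converse comes from applying $t(-,a_0,y)$ to $(t(x,y,a_0),a_0)\in\alpha$, using $t(t(x,y,a_0),a_0,y)=x$ and $t(a_0,a_0,y)=y$. For (1), $(x,y)\in\beta$ gives $t(x,y,a_0)\in V\subseteq A$, and the same identity $x=t(t(x,y,a_0),a_0,y)$ exhibits $x$ as $t$ applied to three elements of $A$ as soon as $y\in A$.

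Finally, minimality and uniqueness reduce to the same kind of manipulation. If $\beta''$ is any congruence of $\bB$ containing $\alpha$, then for every $(x,y)\in\beta$ the pair $(t(x,y,a_0),a_0)$ lies in $\alpha\subseteq\beta''$, and applying $t(-,a_0,y)$ recovers $(x,y)\in\beta''$; so $\beta$ is the smallest extension of $\alpha$. For uniqueness, if $\beta'$ satisfies (1) and (2), then $\beta\subseteq\beta'$ by this minimality, while conversely $(x,y)\in\beta'$ gives $(t(x,y,a_0),a_0)\in\beta'$ by applying $t(-,-,a_0)$; property (1) then forces $t(x,y,a_0)\in A$, and property (2) places the resulting pair in $\alpha$, whence $(x,y)\in\beta$. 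I expect the compatibility check inside the congruence verification to be the only substantive technical point, since it is the only step that genuinely uses the affine hypothesis on $\bB$ rather than merely being bookkeeping with $t$-identities.
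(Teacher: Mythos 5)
Your proof is correct and takes essentially the same approach as the paper: fix a base point in $A$, use the affine term $t$ to transfer $\alpha$ to a binary relation on $B$, and verify the congruence and uniqueness properties by direct $t$-identity manipulation. Your description of $\beta$ as $\{(x,y)\in B^2 : t(x,y,a_0)\in[a_0]_\alpha\}$ is an equivalent (and arguably cleaner, quantifier-free) reformulation of the paper's $\{(x,y)\in B^2 : \exists a\in B,\,(x-a,y-a)\in\alpha\}$, and you supply the congruence verification that the paper leaves to the reader; the minimality and uniqueness arguments are the same.
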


\begin{proof}
We can assume that the neutral element of $+$ belongs to $\bA$. Indeed we pick $0\in A$ and set $x+y=t(x,0,y)$. It follows that $A$ is stable for $+$. Also note that $x-y=t(x,y,0)$ and $t(x,y,z)=x-y+z$.

Let $(x,x')\in\alpha$, $(y,y')\in\alpha$. As $\alpha$ is compatible with $t$, it follows that $(x+y,x'+y') = ( t(x,0,y),t(x',0,y'))\in\alpha$. Therefore $\alpha$ is compatible with $+$ and $-$.

Define $\beta=\setm{(x,y)\in B^2}{(\exists a\in B)( (x-a,y-a)\in\alpha)}$. We will leave it to the reader to check that $\beta$ is a congruence that satisfies conditions  (1) and (2).

Let $\gamma$ be a congruence of $\bB$ containing $\alpha$. Let $x,y,a\in B$ such that $(x-a,y-a)\in\alpha$, so $(x-a,y-a)\in\gamma$, hence $(x,y)=(x-a+a,y-a+a)\in\gamma$. Therefore $\gamma$ contains $\beta$, hence $\beta$ is the smallest extension of $\alpha$ to $\bB$.

Let $\gamma$ be a congruence satisfying $(1)$ and $(2)$. First note that $\gamma$ contains $\alpha$, so $\gamma$ contains $\beta$. Conversely, let $(x,y)\in\gamma$. Note that $(x-y,0)=(x-y,y-y)\in\gamma$ and $0\in A$, so $x-y\in A$, and so $(x-y,y-y)\in \gamma\cap A^2=\alpha$, that is $(x,y)\in\beta$. Therefore $\gamma=\beta$.
\end{proof}

\begin{lemma}\label{L:CardinalDoubleQuotient}
Let $\bA\subseteq\bB$ be Abelian algebras. Let $\alpha$ be a congruence of $\bA$. Let $\beta$ be the minimal extension of $\alpha$ to $\bB$. Then $\card{\bB/\beta} = \card{\bB/\Theta_A} \times \card{\bA/\alpha}$.
\end{lemma}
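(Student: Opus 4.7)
The plan is to analyze the canonical surjection $\pi \colon \bB/\beta \onto \bB/\Theta_A$ and show that every fiber has cardinality $\card{\bA/\alpha}$; the product formula then follows by counting. As in the proof of the preceding lemma, I will first assume without loss of generality that $0 \in A$, so that $+$ and $-$ (defined from $t$) are terms of $\bB$, the set $A$ is closed under them, and $\alpha$ is a congruence for them. Under this normalization Lemma~\ref{L:ThetaDiscr} identifies $\Theta_A$ with the coset relation $\setm{(x,y)}{x - y \in A}$, and the inclusion $\beta \subseteq \Theta_A$ is immediate: if $(x-a, y-a) \in \alpha \subseteq A^2$ witnesses $(x,y) \in \beta$, then $x - y = (x-a) - (y-a) \in A$. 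So $\pi$ is well-defined and surjective.

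Next, for each fixed $x \in B$ I will define a map
\[
\Phi_x \colon \bA/\alpha \to \pi^{-1}([x]_{\Theta_A}), \qquad [a]_\alpha \mapsto [x+a]_\beta,
\]
and prove it is a bijection. Well-definedness uses $x$ itself as the witness in the definition of $\beta$: if $(a,a') \in \alpha$ then $((x+a) - x, (x+a') - x) = (a,a') \in \alpha$, so $(x+a, x+a') \in \beta$. Surjectivity is also routine: any $y$ in the fiber satisfies $y - x \in A$, so $y = x + a$ with $a := y - x \in A$.

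The only step with genuine content is injectivity, and it is where abelianness enters. Suppose $(x+a, x+a') \in \beta$; by definition there is $b \in B$ with $u := x + a - b$ and $v := x + a' - b$ both in $A$ and $(u,v) \in \alpha$. Applying compatibility of $\alpha$ with subtraction to the pairs $(u,v)$ and $(u,u)$ gives $(u-u, v-u) = (0, v - u) \in \alpha$, i.e.\ $(0, a' - a) \in \alpha$; adding the reflexive pair $(a,a)$ via compatibility of $\alpha$ with $+$ then yields $(a,a') \in \alpha$, as required.

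Combining these bijections, every fiber of $\pi$ has cardinality $\card{\bA/\alpha}$, and since there are $\card{\bB/\Theta_A}$ fibers, the desired equality $\card{\bB/\beta} = \card{\bB/\Theta_A} \times \card{\bA/\alpha}$ follows. The main obstacle is the injectivity of $\Phi_x$, and it is precisely there that the affinity of $\bB$ and the ability to transport $\alpha$-relations through $+$ and $-$ are used; the rest is bookkeeping using Lemmas~\ref{L:ThetaDiscr} and~\ref{L:ExtensionCongruences}.
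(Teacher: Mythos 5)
Your proof is correct, but it takes a genuinely different route from the paper. The paper's argument is structural: it verifies the identity $\Theta_{\bA/\beta}=\Theta_{\bA}/\beta$, then uses the third isomorphism theorem $\bB/\Theta_{\bA}\cong(\bB/\beta)/(\Theta_{\bA/\beta})$ together with Lemma~\ref{L:CardinalQuotient} (whose proof black-boxes the Freese--McKenzie fact that congruence classes of an Abelian algebra have equal cardinality) applied to the algebra $\bB/\beta$ with subalgebra $\bA/\beta\cong\bA/\alpha$. You instead count directly: you fix $0\in A$ as in the proof of Lemma~\ref{L:ExtensionCongruences}, observe $\beta\subseteq\Theta_A$, and for each $x\in B$ construct an explicit bijection $[a]_\alpha\mapsto[x+a]_\beta$ between $\bA/\alpha$ and the $\pi$-fiber above $[x]_{\Theta_A}$. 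Your injectivity computation (pushing $(u,v)\in\alpha$ through $-$ to get $(0,a'-a)\in\alpha$, then through $+$ to get $(a,a')\in\alpha$) is the one place where real content enters, and it is correct since $\alpha$ inherits compatibility with $+$ and $-$ from compatibility with $t$. The trade-off: your version is more hands-on and essentially reproves the relevant instance of the equal-cardinality-of-classes fact rather than citing it, whereas the paper's version is shorter precisely because it delegates the counting to Lemma~\ref{L:CardinalQuotient} and the isomorphism theorem. Both rely on the explicit form of $\beta$ from Lemma~\ref{L:ExtensionCongruences}.
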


\begin{proof}
First note that $\Theta_{\bA}\supseteq\alpha$, hence $\Theta_{\bA}\supseteq\beta$. Also note that $\bA/\beta$ is a subalgebra of $\bB/\beta$.

Let $(x/\beta,y/\beta)\in\Theta_{\bA/\beta}$. There is $u\in \bA/\beta$ such that $y/\beta-x/\beta + u\in \bA/\beta$, hence there is $c\in A$ such that $(y-x+c)/\beta\in \bA/\beta$. That is $(y-x+c)/\beta=a/\beta$ for some $a\in A$. It follows from Lemma~\ref{L:ExtensionCongruences}(1) that $(y-x+c)\in A$. Therefore $(x,y)\in \Theta_{\bA}$, hence $(x/\beta,y/\beta)\in\Theta_{\bA}/\beta$.

Conversely, let $(x/\beta,y/\beta)\in\Theta_{\bA}/\beta$. That is $(x,y)\in\Theta_{\bA}$. There is $c\in A$ such that $x-y+c\in A$, so $x/\beta-y/\beta+c/\beta\in A/\beta$. As $c/\beta\in \bA/\beta$, it follows that $(x/\beta,y/\beta)\in\Theta_{\bA/\beta}$. This proves that $\Theta_{\bA/\beta}=\Theta_{\bA}/\beta$.

Note that $\bB/\Theta_{\bA}\cong (\bB/\beta)/(\Theta_{\bA}/\beta) = (\bB/\beta)/(\Theta_{\bA/\beta})$. Lemma~\ref{L:CardinalQuotient} implies that $\card{\bB/\Theta_{\bA}} = \card{\bB/\beta}/\card{\bA/\beta}$. Now $\beta\cap A^2=\alpha$, so $\bA/\beta\cong\bA/\alpha$, and therefore $\card{\bB/\beta} = \card{\bB/\Theta_{\bA}} \times \card{\bA/\beta}$.
\end{proof}

\begin{theorem}\label{T:FactorisationPetitePuissance}
Let $\bA$ and $\bE$ be finite in a locally finite variety of Abelian algebras. Then there exists a positive integer $\ell$ such that, given a positive integer $n$, a subalgebra $\bC$ of $\bA^n$ and a homomorphism $h\colon \bC\to\bE$, there exists a homomorphism $p\colon \bA^n\to\bA^\ell$ and a homomorphism $k\colon p(\bC)\to \bE$ such that $k\circ p\res C=h$. Moreover we can choose $p$ to be a term in $t$.
\end{theorem}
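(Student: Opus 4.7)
The plan is to combine a maximal-extension argument with the total-homomorphism factorization of Lemma~\ref{L:PetiteFactorisation}. First, since $\bA^n$ is finite, I would pick $\bar\bC \le \bA^n$ maximal among subalgebras containing $\bC$ to which $h$ extends, and let $\bar h\colon \bar\bC \to \bE$ be this maximal extension. By construction $\bar h$ has no proper extension within $\bA^n$, so Corollary~\ref{C:MorphismesMaximaux} supplies $N_0 = N_0(\bA,\bE)$ such that $\bA^n/\Theta_{\bar\bC}$ is generated by $N_0$ elements; local finiteness then bounds $|\bA^n/\Theta_{\bar\bC}| \le M_0 := |F_{\Var(\bA)}(N_0)|$.

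Next, I would set $\theta := \ker \bar h$, a congruence of $\bar\bC$, and let $\bar\theta$ be its minimum extension to $\bA^n$ given by Lemma~\ref{L:ExtensionCongruences}. Lemma~\ref{L:CardinalDoubleQuotient} then gives
$$|\bA^n/\bar\theta| \;=\; |\bA^n/\Theta_{\bar\bC}|\cdot|\bar\bC/\theta| \;\le\; M_0\cdot|E|\,,$$
so $\bT := \bA^n/\bar\theta$ has cardinality bounded independently of $n$, $\bC$, and $h$. Applying Lemma~\ref{L:PetiteFactorisation} to the canonical projection $g\colon \bA^n \to \bT$ yields a homomorphism $p\colon \bA^n \to \bA^\ell$ that is a term in $t$, together with $q\colon \bA^\ell \to \bT$ satisfying $g = q\circ p$. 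By Corollary~\ref{C:BoundH}, the integer $\ell$ is controlled by $|A|$ and $|T| \le M_0|E|$, hence depends only on $\bA$ and $\bE$.

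It then remains to define $k\colon p(\bC) \to \bE$ by $k(p(c)) := h(c)$ for $c \in C$ and verify well-definedness. Suppose $p(c_1) = p(c_2)$ with $c_1,c_2 \in C \subseteq \bar\bC$. Then $g(c_1) = q(p(c_1)) = q(p(c_2)) = g(c_2)$, so $(c_1,c_2) \in \bar\theta$; condition~(2) of Lemma~\ref{L:ExtensionCongruences} then gives $(c_1,c_2) \in \bar\theta \cap \bar\bC^2 = \theta = \ker \bar h$, whence $h(c_1) = \bar h(c_1) = \bar h(c_2) = h(c_2)$. The same argument shows that setting $k(p(\bar c)) := \bar h(\bar c)$ extends $k$ to all of $p(\bar\bC)$, where it is manifestly a homomorphism since both $p$ and $\bar h$ are; restricting back to $p(\bC)$ yields the desired map, and $k\circ p\restriction C = h$ holds by definition.

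The main obstacle — or rather, the key conceptual step — is the middle paragraph: ensuring that the target algebra $\bT$ has size bounded in terms of $\bA$ and $\bE$ alone requires using the congruence-extension machinery twice, once via Lemma~\ref{L:ExtensionCongruences} to lift $\ker \bar h$ to $\bA^n$, and once via Lemma~\ref{L:CardinalDoubleQuotient} to split $|\bA^n/\bar\theta|$ into the ``affine part'' controlled by maximality of $\bar h$ and the ``kernel part'' controlled by $|E|$. Once $\bT$ is bounded, the factorization is an immediate consequence of Lemma~\ref{L:PetiteFactorisation}.
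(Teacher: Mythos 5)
Your proposal is correct and follows essentially the same route as the paper's proof: take a maximal extension of $h$, apply Corollary~\ref{C:MorphismesMaximaux} together with Lemma~\ref{L:CardinalDoubleQuotient} to bound the target quotient $\bA^n/\bar\theta$ independently of $n$, and then factor the canonical projection through $\bA^\ell$ via Lemma~\ref{L:PetiteFactorisation} and Corollary~\ref{C:BoundH}. The only difference is in the last step: where the paper constructs $k$ by chasing a commutative diagram through $\sigma$, $u$, $\eta_2$, you define $k$ directly on $p(\bar\bC)$ by $k(p(\bar c)):=\bar h(\bar c)$ and verify well-definedness from $\bar\theta\cap\bar\bC^2=\theta$, which is a cleaner presentation of the same underlying fact.
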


\begin{proof}
Denote by $\cV$ a locally finite variety of Abelian algebras containing $\bA$ and $\bE$. Note that most of the homomorphisms and commuting relations used in this proof are illustrated in Figure~\ref{F:DiagrammePrincipal}.

Let $N$ be as in Corollary~\ref{C:MorphismesMaximaux}. Set $N'=\card{F_{\cV}(N)}\times\card{A}$.

Given $\bS\in\cV$, such that $\card{\bS}$ divides $N'$, we pick $\ell_{\bS}$ such that $\langle\cH(\bA^2,\bS);+\rangle$ has a generating family with $\ell_{\bS}$ elements. Set
\begin{equation*}
\ell=1+\max\setm{\ell_{\bS}}{\bS\in\cV\text{ and }\card{\bS}\text{ divides } N'}\,.
\end{equation*}

Let $n$ be a positive integer, $\bC \in \Sub(\bA^n)$, and $h\colon\bC\to\bE$ a homomorphism. Let $h'\colon\bD\to\bE$ be a maximal extension of $h$. Denote by $\eta_1\colon\bC\to\bD$ the inclusion homomorphism. As $h'$ extends $h$, we have
\begin{equation}\label{E:p2}
h'\circ\eta_1=h\,.
\end{equation}

It follows from Corollary~\ref{C:MorphismesMaximaux} that $\bA^n/\bD$ is generated by $N$ elements, hence $\card{\bA^n/\bD}$ divides $\card{F_{\cV}(N)}$. Denote by $\varepsilon_1\colon\bD\to\bA^n$ the inclusion morphism.

Denote by $\alpha$ the kernel of $h'$ and by $\beta$ the minimal extension of $\alpha$ to $\bA^n$. Set $\bR=\bD/\alpha$ and $\bS=\bA^n/\beta$. Let $\varepsilon_2\colon\bR\to\bS$ be the canonical embedding. Let $\pi\colon\bD\to\bR$ and $\pi'\colon\bA^n\to\bS$ be the canonical projections. Note that
\begin{equation}\label{E:p3}
\pi'\circ\varepsilon_1=\varepsilon_2\circ\pi\,.
\end{equation}

Note that $h'$ factors through $\pi$, so there is an embedding $\sigma\colon\bR\to \bA$ such that
\begin{equation}\label{E:p1}
h'=\sigma\circ\pi\,.
\end{equation}
Hence $\card \bR = \card{\bD/\alpha}$ divides $\card \bA$. It follows from Lemma~\ref{L:CardinalDoubleQuotient} that
$$\card\bS=\card{\bA^n/\beta}=\card{\bA^n/\Theta_{\bD}}\times\card{\bD/\alpha}\text{ divides } N'\,,$$
hence $1+\ell_{\bS}\le \ell$. That is $\langle\cH(\bA^2,\bS);+\rangle$ has a generating family with $\ell-1$ elements.

From Lemma~\ref{L:PetiteFactorisation}, we have morphisms $p\colon\bA^n\to\bA^\ell$ and $q\colon\bA^\ell\to\bS$ such that
\begin{equation}\label{E:p5}
q\circ p=\pi'\,.
\end{equation}

As $\bD$ is a subalgebra of $\bA^n$, it follows that $p(\bD)$ is a subalgebra of $\bA^\ell$. Denote by $\varepsilon_3\colon p(\bD)\to\bA^\ell$ the canonical embedding. Note that
\begin{equation}\label{E:p6}
\varepsilon_3\circ p\res\bD=p\circ\varepsilon_1\,.
\end{equation}
Similarly we denote by $\eta_2\colon p(\bC)\to p(\bD)$ the inclusion morphism, so
\begin{equation}\label{E:p4}
\eta_2\circ p\res\bC=p\res\bD\circ\eta_1\,.
\end{equation}

The following equalities are direct consequences of \eqref{E:p3}, \eqref{E:p5}, and \eqref{E:p6}
\begin{equation}\label{E:qe3p=e2pi}
q\circ\varepsilon_3\circ p\res\bD=q\circ p\circ\varepsilon_1=\pi'\circ\varepsilon_1=\varepsilon_2\circ\pi\,.
\end{equation}
So $q(\varepsilon_3(p(D)))=\varepsilon_2(\pi(D))=\varepsilon_2(R)$. However, $\varepsilon_2$ is an embedding, so $q(\varepsilon_3(p(D)))$ corresponds to a subalgebra of $R$; hence, there is a homomorphism $u\colon p(\bD)\to\bR$ such that
\begin{equation}\label{E:p7}
q\circ\varepsilon_3=\varepsilon_2\circ u\,.
\end{equation}

It follows from \eqref{E:p7} and \eqref{E:qe3p=e2pi} that $\varepsilon_2\circ u\circ p\res D= q\circ\varepsilon_3\circ p\res\bD=\varepsilon_2\circ\pi$. As $\varepsilon_2$ is an embedding it follows that
\begin{equation}\label{E:up=pi}
u\circ p\res\bD=\pi\,.
\end{equation}

The following equalities hold
\begin{align*}
\sigma\circ u\circ\eta_2\circ p\res\bC &= \sigma\circ u\circ p\res\bD\circ\eta_1 & \text{by \eqref{E:p4}.}\\
& =\sigma\circ\pi\circ\eta_1 &  \text{by \eqref{E:up=pi}.} \\
&=h'\circ\eta_1 & \text{by \eqref{E:p1}.}\\
&=h & \text{by \eqref{E:p2}.}\\
\end{align*}
Therefore, denoting $k=\sigma\circ u\circ\eta_2\colon p(\bC)\to\bE$, we have $k\circ p\res\bC=h$.
\end{proof}

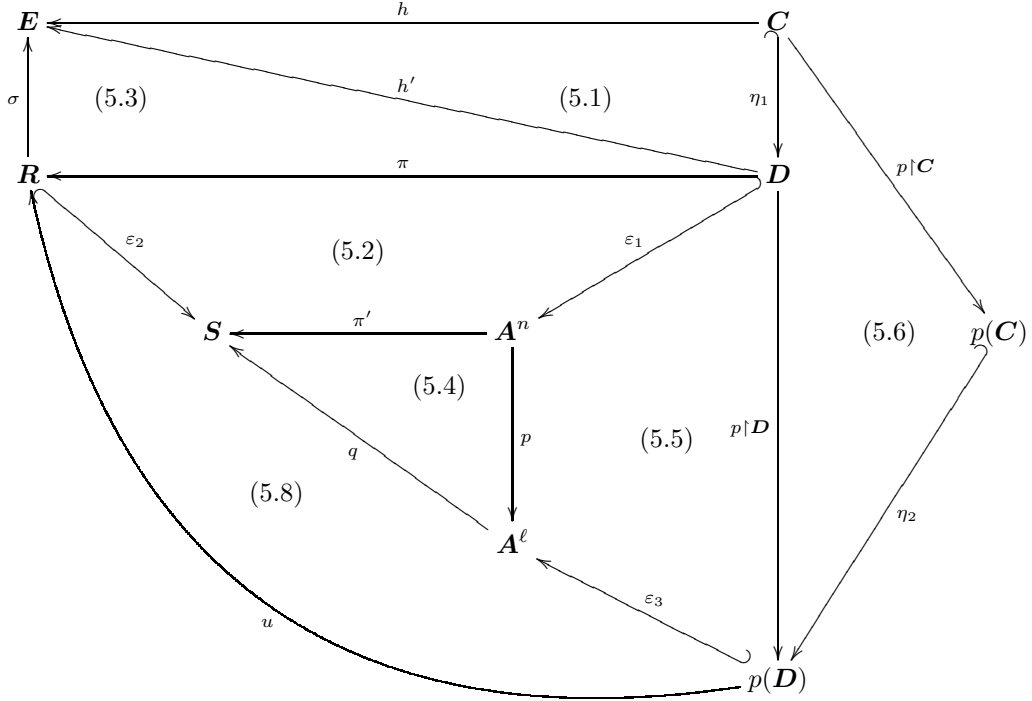
\begin{figure}
\xymatrix@R=0.15cm@C=0.15cm{
\bE & & & & & & & & & & & & \bC \ar[llllllllllll]_h  \ar@{_{(}->}[dddd]_{\eta_1} \ar[ddddddddrrrr]^{p\res\bC} \\
\\
& & \eqref{E:p1} & & & & & & & \eqref{E:p2}\\
\\
\bR  \ar[uuuu]^\sigma \ar@{_{(}->}[ddddrrrr]^{\varepsilon_2} & & & & & & & & & & & & \bD \ar[lllllllllllluuuu]_{h'} \ar[llllllllllll]_\pi \ar@{_{(}->}[lllldddd]_{\varepsilon_1} \ar[dddddddddddd]_{p\res\bD}\\
\\
& & & & & & \eqref{E:p3} \\
\\
& & & & \bS & & & & \bA^n \ar[llll]_{\pi'} \ar[dddd]^{p}  & & & & & & \eqref{E:p4} & & p(\bC)  \ar@{_{(}->}[lllldddddddd]^{\eta_2} \\
& & & & & & & \eqref{E:p5}\\
& & & & & & & &  & & \eqref{E:p6} \\
& & & & & \eqref{E:p7} \\
& & & & & & & & \bA^\ell \ar[lllluuuu]^{q} \\
 \\
 \\
 \\
 & & & & & &  & & & & & & p(\bD) \ar@{_{(}->}[lllluuuu]_{\varepsilon_3} \ar@/^7pc/[lllllllllllluuuuuuuuuuuu]^u
}
\caption{Homomorphisms factor through sub-power of small dimension.}\label{F:DiagrammePrincipal}
\end{figure}

\begin{corollary}\label{C:ClonePartielFinimentEngendre}
Let $\bA$ be a finite Abelian algebra. The partial clone of partial operations on $A$ compatible with $\bA$ is finitely generated.
It is generated by partial operations of arity at most $\max \set{3,\ell,N}$, where $N$ is the bound from Corollary~\textup{\ref{C:CloneDomaineFinimentEngendre}}, and $\ell$ is the bound from Theorem~\textup{\ref{T:FactorisationPetitePuissance}} in the case that $\bE=\bA$.
\end{corollary}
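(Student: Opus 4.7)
The plan is to exhibit an explicit finite generating set of the desired arity bound and then reduce every partial compatible operation to it via Theorem~\ref{T:FactorisationPetitePuissance}. Set $M=\max\{3,\ell,N\}$ and let $\cG$ consist of: (i) the affinizing term $t$, of arity $3$; (ii) all partial projections $\pi_1^N\res C$ for $\bC\in\Sub(\bA^N)$, of arity $N$; and (iii) all homomorphisms $k\colon\bD\to\bA$ with $\bD\in\Sub(\bA^\ell)$, of arity $\ell$. Because $\bA$ is finite, $\Sub(\bA^N)$ and $\Sub(\bA^\ell)$ are finite, and each $\hom(\bD,\bA)$ is finite, so $\cG$ is finite and its operations have arity at most $M$.

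Next I would take an arbitrary partial compatible function $f\colon\bC\to\bA$ with $\bC\in\Sub(\bA^n)$ and show $f\in\gen{\cG}$. Applying Theorem~\ref{T:FactorisationPetitePuissance} in the case $\bE=\bA$ yields a homomorphism $p=(p_1,\dots,p_\ell)\colon\bA^n\to\bA^\ell$ whose components $p_i$ are all terms in $t$, together with a homomorphism $k\colon p(\bC)\to\bA$ satisfying $f=k\circ p\res\bC$. Since each $p_i$ is a term in $t\in\cG$, all $p_i$ lie in $\gen{\cG}$; the map $k$ belongs to $\cG$ itself because $p(\bC)\in\Sub(\bA^\ell)$. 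Hence the composite $k\circ p\colon A^n\to A$ lies in $\gen{\cG}$, and its domain is $p^{-1}(p(\bC))$, which contains $\bC$.

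Finally I would trim $k\circ p$ down to $\bC$. By Corollary~\ref{C:CloneDomaineFinimentEngendre}, the sub-clone of $\gen{\cG}$ generated by $t$ and the projections $\pi_1^N\res C'$ for $\bC'\in\Sub(\bA^N)$ already contains $\pi_1^n\res\bC$, so $\bC$ is a domain of $\gen{\cG}$. Lemma~\ref{L:ClonePartielDomaine}(3) then produces $(k\circ p)\res\bC\in\gen{\cG}$, and by the factorization this partial operation equals $f$. Consequently $\gen{\cG}$ coincides with the partial clone of all $\bA$-compatible partial operations, proving the corollary.

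The main technical point to watch is that $k\circ p$ generally has strictly larger domain than~$\bC$, namely $p^{-1}(p(\bC))$, so one cannot simply equate $f$ with $k\circ p$; the trimming via Corollary~\ref{C:CloneDomaineFinimentEngendre} and Lemma~\ref{L:ClonePartielDomaine}(3) is indispensable. Beyond this observation, the argument is a clean bookkeeping of Theorem~\ref{T:FactorisationPetitePuissance} within the partial-clone formalism of Section~\ref{S:basics}.
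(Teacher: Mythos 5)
Your proof is correct and follows essentially the same route as the paper's: the same generating set $\{t\}\cup\{\pi_1^N\res C\}\cup\{k\colon\bD\to\bA,\ \bD\le\bA^\ell\}$, the same invocation of Theorem~\ref{T:FactorisationPetitePuissance}, and the same domain-trimming via Corollary~\ref{C:CloneDomaineFinimentEngendre} and Lemma~\ref{L:ClonePartielDomaine}(3). Your explicit remark that $k\circ p$ has domain $p^{-1}(p(\bC))\supseteq\bC$, so the restriction step is genuinely needed, is a point the paper leaves implicit but handles identically.
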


\begin{proof}
Given integers $1\le i\le n$ , we denote by $\pi^n_i\colon\bA^n\to\bA$ the canonical projection on the $i$-th coordinate.

Let $\ell$ be as in Theorem~\ref{T:FactorisationPetitePuissance} for $\bE=\bA$. Let $N$ be as in Corollary~\ref{C:CloneDomaineFinimentEngendre}. Set $X=\setm{f\colon \bD\to\bA}{\bD\text{ is a subalgebra of }\bA^\ell}$. Set $Y=\setm{\pi_1^N\res C}{\bC\text{ is a subalgebra of }\bA^N}$.

Denote by $\cF$ the partial clone generated by $\set t\cup X\cup Y$.

Let $n$ be a positive integer, let $\bC$ be a subalgebra of $\bA^n$, let $h\colon\bC\to\bA$ be a homomorphism. By Theorem \ref{T:FactorisationPetitePuissance}, there is a term $p\colon\bA^n\to\bA^\ell$ in $t$, and a homomorphism $k\colon p(\bC)\to\bA$ such that $h=k\circ p\res C$.

Note that $k\in\cF$, as it is a partial homomorphism of arity $\ell$, so $k\circ p$ belongs to $\cF$. Moreover, by Corollary~\ref{C:CloneDomaineFinimentEngendre}, $\pi_1^n|_C \in \cF$, therefore, by Lemma~\ref{L:ClonePartielDomaine}(3), $h=k\circ p\res C$ belongs to $\cF$.

Therefore $\cF$ is the set of all partial operations on $A$, compatible with $\bA$. Moreover $\cF$ is, by construction, finitely generated. The arity bound follows by construction as well.
\end{proof}

We remark in passing that our corollary provide an additional proof that every finite Abelian algebra $\bA$ is dualizable (even though it is unnecessarily complicated compared to the arguments in \cite{Gil14}). Dualizability of $\bA$ follows as by the Duality Compactness Theorem \cite{Wi:NTFP,Za:NDVA}, it suffices to show that the enriched partial hom-clone of $\bA$ is finitely generated.
Our arguments are however not independent, as we rely on Lemmas \ref{L:ThetaDiscr}, \ref{L:MeetIrred}, and \ref{L:PetiteFactorisation}  from \cite{Gil14}.

We are now ready to prove our final result about the strong dualizability of Abelian algebras.

\begin{lemma}\label{L:AATAO}
An Abelian algebra has enough total algebraic operations.
\end{lemma}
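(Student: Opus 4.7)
The plan is to directly combine Theorem~\ref{T:FactorisationPetitePuissance} (specialized to $\bE=\bA$) with the observation that a homomorphism $p\colon\bA^n\to\bA^\ell$ is nothing but a tuple of $\ell$ total homomorphisms $\bA^n\to\bA$ whose joint kernel is $\ker p$. Since the $\ell$ produced by that theorem depends only on $\bA$ (and $\bE=\bA$), it is a constant depending only on $\bA$, so the witnessing function $\varphi$ in Definition~\ref{D:enoughTAO} may be taken to be the constant $\ell$.

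More concretely, fix a finite Abelian algebra $\bA$, apply Theorem~\ref{T:FactorisationPetitePuissance} with $\bE=\bA$, and let $\ell$ be the resulting integer. Define $\varphi\colon\omega\to\omega$ by $\varphi(m)=\ell$ for all $m$. Now suppose $\bB\le\bC\le\bA^n$ and $h\in\hom(\bB,\bA)$ extends to some $h'\in\hom(\bC,\bA)$. Apply Theorem~\ref{T:FactorisationPetitePuissance} to $h'$ to obtain a homomorphism $p\colon\bA^n\to\bA^\ell$ (a term in $t$, and in particular a total homomorphism) and a homomorphism $k\colon p(\bC)\to\bA$ with $k\circ(p\res C)=h'$. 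Writing $p=(p_1,\dots,p_\ell)$ with $p_i=\pi_i^\ell\circ p\in\hom(\bA^n,\bA)$, set $X=\{p_1,\dots,p_\ell\}$; then $|X|\le\ell=\varphi(|B|)$.

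Let $\theta=\bigcap_{f\in X}\ker(f\res C)=\ker(p\res C)$. The first isomorphism theorem gives a canonical isomorphism $\iota\colon\bC/\theta\to p(\bC)$ sending $x/\theta\mapsto p(x)$. Define $\tilde k=k\circ\iota\colon\bC/\theta\to\bA$. For any $b\in B$, writing $\alpha\colon\bB\to\bC/\theta$ for the natural map,
\begin{equation*}
\tilde k(\alpha(b))=k(\iota(b/\theta))=k(p(b))=h'(b)=h(b),
\end{equation*}
so $\tilde k\circ\alpha=h$, verifying condition (2) of Definition~\ref{D:enoughTAO}. The main ``obstacle''—producing a bounded-size factorization of the partial homomorphism $h'$ through a total homomorphism of bounded arity—has already been absorbed into Theorem~\ref{T:FactorisationPetitePuissance}, so the present lemma is essentially a bookkeeping consequence of that result.
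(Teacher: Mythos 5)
Your proof is correct and follows the same approach as the paper: apply Theorem~\ref{T:FactorisationPetitePuissance} with $\bE=\bA$ to an extension $h'$ of $h$, take $\varphi$ to be the constant $\ell$, and let $X$ be the set of coordinate projections of $p$. You have merely spelled out the verification of condition (2) of Definition~\ref{D:enoughTAO} via the first isomorphism theorem, which the paper leaves implicit with the phrase ``the result now follows.''
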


\begin{proof}
Let $\bA$ be an Abelian algebra, let $\ell$ be as in Theorem~\ref{T:FactorisationPetitePuissance} for $\bA=\bE$. We consider $\varphi\colon\omega\to\omega$ the constant map equal to $\ell$.

Let $n$ be a positive integer, and let $\bB\subseteq\bC$ be subalgebras of $\bA^n$. Denote by $\iota\colon\bB\to\bC$ the inclusion homomorphism. Let $h\colon\bB\to\bA$, let $h'\colon\bC\to\bA$ be an extension of $h$, that is $h'\circ\iota=h$.

By Theorem~\ref{T:FactorisationPetitePuissance} there exist a homomorphism $p\colon\bA^n\to\bA^\ell$ and a homomorphism $k\colon p(\bC)\to\bA$ such that $h'=k\circ p\res C$. We obtain $k\circ p\res C\circ\iota=h'\circ\iota=h$.
The result now follows with $X=\setm{\pi_i\circ p}{ 1\le i\le\ell}$.
\end{proof}

\begin{theorem}
Finite Abelian algebras are strongly dualizable.
\end{theorem}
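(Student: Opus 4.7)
The plan is to invoke Theorem~\ref{T:enoughTAO}, which reduces strong dualizability to two conditions: finite dualizability and the property of having enough total algebraic operations. Both ingredients are already available to us: the dualizability of every finite Abelian algebra was established by Gillibert in \cite{Gil14} (and independently by Kearnes and Szendrei in \cite{KSpre}), while Lemma~\ref{L:AATAO} (proved immediately above) supplies the second condition by exhibiting the required function $\varphi$ as the constant $\ell$ obtained from Theorem~\ref{T:FactorisationPetitePuissance} applied to $\bE=\bA$.

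So the entire argument is just a citation-and-composition: fix a finite Abelian algebra $\bA$; by the cited results $\bA$ is dualizable, and by Lemma~\ref{L:AATAO} it has enough total algebraic operations. Hence Theorem~\ref{T:enoughTAO} applies and $\bA$ is strongly dualizable.

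There is no real obstacle at this stage, since all the technical work has been carried out in Sections \ref{S:factoring}, \ref{S:Extensions}, and \ref{S:factorPartial}. If one also wishes to record the sharper consequences advertised in the introduction (alter ego of finite type, explicit arity bound, full dualizability), one additionally cites Theorem~\ref{T:StBruteForce} and Lemma~\ref{L:StShift}: strong dualizability combined with Brute Force yields a strong duality by the enriched partial hom-clone with the discrete topology, and then Lemma~\ref{L:StShift} lets one replace this clone by the finite generating set of arity at most $\max\{3,\ell,N\}$ supplied by Corollary~\ref{C:ClonePartielFinimentEngendre}. Since strong dualizability implies full dualizability, this simultaneously proves Theorem~\ref{T:main} with the promised quantitative refinements.
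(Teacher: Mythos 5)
Your proof is correct and matches the paper's own argument essentially verbatim: both cite the dualizability result from \cite{Gil14}, Lemma~\ref{L:AATAO} for enough total algebraic operations, and Theorem~\ref{T:enoughTAO} (which is \cite[Theorem~4.3]{LMW}) to conclude. The additional remarks you make about deriving the quantitative refinements via Theorem~\ref{T:StBruteForce}, Lemma~\ref{L:StShift}, and Corollary~\ref{C:ClonePartielFinimentEngendre} correctly summarize how the paper proceeds to Theorem~\ref{T:boundstrdual}.
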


\begin{proof}
Let $\bA$ be a finite Abelian algebra. By Lemma~\ref{L:AATAO}, $\bA$ has enough total algebraic operations, moreover $\bA$ is dualizable (\cite{Gil14}, see also the remark after Corollary \ref{C:ClonePartielFinimentEngendre}).
By \cite[Theorem 4.3]{LMW}, $\bA$ is strongly dualizable.
\end{proof}

Our main Theorem \ref{T:main} now follows from the well known fact that any strongly dualizable algebra is fully dualizable (see for example \cite{CD98}, Theorem 3.2.4). In our final result, we  provide
an explicit bound on the partial functions in the strongly dualizing alter ego.

\begin{theorem}\label{T:boundstrdual}
Let $\bA$ be a finite Abelian algebra with $\card A=p_1^{\alpha_1}\dots p_k^{\alpha_k}$. Then $A$ is strongly dualized by $\langle A;P, \tau\rangle,$ where $\tau$ is the discrete topology on $A$ and $P$ is the set of all algebraic partial operations on $A$ of arity at most
$$
1+\max_{1\le i\le k} \left(N\alpha_i^3+ 2\alpha_i^2 \right)\,,
$$
where $N=1+p_1^{\alpha_1^4}\dots p_k^{\alpha_k^4}\times \left(\sum_{i=1}^{k} \alpha_i^3 -1\right)$.
\end{theorem}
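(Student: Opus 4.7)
The plan is to combine the strong dualizability of finite Abelian algebras (the preceding theorem) with the Brute Force Strong Duality Theorem~\ref{T:StBruteForce} and the M-shift Lemma~\ref{L:StShift}: together these say that $\bA$ is strongly dualized by $\langle A, \cP', \tau\rangle$ whenever $\cP'$ is a clone-generating subset of the enriched partial hom-clone $\cP$ of $\bA$. The theorem's claim is therefore equivalent to showing that $\cP$ admits a generating set whose members have arity at most $1+\max_i(N\alpha_i^3+2\alpha_i^2)$, and for this one simply keeps track of the arities in the generating set already built in Corollary~\ref{C:ClonePartielFinimentEngendre}.

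By that corollary, $\cP$ is generated by $\{t\}$ together with partial homomorphisms of arity $\ell$ (coming from Theorem~\ref{T:FactorisationPetitePuissance} applied to $\bE=\bA$) and partial projections of arity $M$ (coming from Corollary~\ref{C:CloneDomaineFinimentEngendre}). The bound $M=1+\max_i\alpha_i^3$ is already explicit and satisfies $M\le 1+\max_i(N\alpha_i^3+2\alpha_i^2)$, and the affine term $t$ has arity $3$, which is absorbed by the claimed bound. So the entire task reduces to producing an explicit upper bound on $\ell$.

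To bound $\ell$, I would unwind the proof of Theorem~\ref{T:FactorisationPetitePuissance} and insert a quantitative substitute for each of its inputs. First, the constant $N$ from Corollary~\ref{C:MorphismesMaximaux}, applied to $\bE=\bA$, is estimated by Lemma~\ref{L:ExtensionsMorphismesModulesI}, which yields $N\le 1 + L\cdot\bigl(\sum_i r_i\beta_i - 1\bigr)$, where $L$ bounds the number of nontrivial proper submodules of the associated ring $\bR$ and $r_i,\beta_i$ are the $p_i$-exponents of $\card\bR$ and $\card\bE$. Using Corollary~\ref{C:ringsizeAbelian} to get $r_i\le\alpha_i^2$, the identification $\beta_i=\alpha_i$, and an appendix bound $L\le\prod_i p_i^{\alpha_i^4}$ on the number of submodules of a finite module over $\bR$, one recovers precisely the constant $N$ in the statement. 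Next, using Lemmas~\ref{L:AjouterConstantes} and~\ref{L:SeparerConstantes}, $F_\cV(N)$ is (after adjoining a constant) a submodule of a free $\bR$-module of rank $N$, so its $p_i$-exponent is at most $N\alpha_i^2$. Finally, Corollary~\ref{C:BoundH} gives $\ell_\bS \le \max_i\alpha_i\beta_i$ whenever $\card\bS$ divides $N' = \card{F_\cV(N)}\cdot\card A$, so summing the exponents yields $\ell \le 1 + \max_i(N\alpha_i^3 + 2\alpha_i^2)$, where the $2\alpha_i^2$ absorbs both the $\card A$-factor in $N'$ and a small slack from passing between $\cV$ and $\cV_c$.

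The main obstacle is not mathematical but careful accounting: every construction in earlier sections (adding a constant, passing from $\cV$ to $\cV_c$, replacing the affine variety by its underlying module variety, and reducing to Abelian-group endomorphisms via Lemma~\ref{L:Majorations}) must be checked to preserve the prime-by-prime exponent structure, so that when all the estimates are composed one ends up with exactly the exponents $\alpha_i^4$ and $\alpha_i^3$ appearing in the statement. Everything else is an immediate application of the results already assembled; the novel content of the theorem is the explicit numerical bound and the verification that every appendix estimate feeds cleanly into the next.
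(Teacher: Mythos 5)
Your proposal is correct and follows essentially the same route as the paper's proof: Brute Force Strong Duality plus the M-shift Lemma reduce the problem to bounding the arities in the generating set from Corollary~\ref{C:ClonePartielFinimentEngendre}, and you then unwind Theorem~\ref{T:FactorisationPetitePuissance} exactly as the paper does, feeding in Corollary~\ref{C:ringsizeAbelian}, Lemma~\ref{L:Majorations} (the $\prod_i p_i^{\alpha_i^4}$ bound is really a bound on subgroups, hence \emph{a fortiori} on $\bR$-submodules of $\bR$), Lemma~\ref{L:SizefreeAlgModule} for $\card{F_\cV(N)}$, and Corollary~\ref{C:BoundH} to recover $\ell \le 1+\max_i(N\alpha_i^3+2\alpha_i^2)$.
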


\begin{proof}
We may assume that $\bA$ is non-trivial. As $\bA$ is strongly dualizable, it is in particular strongly dualized by the strong brute force alter ego $\langle A; \cP, \tau \rangle$ of Theorem \ref{T:StBruteForce}.
Moreover, by Lemma \ref{L:StShift} we may replace $\cP$ with a generating set, and
by Corollary~\ref{C:ClonePartielFinimentEngendre}, $\cP$ is finitely generated. Hence $\bA$ is finitely strongly dualizable. It remains to establish the bound.

Corollary~\ref{C:ClonePartielFinimentEngendre} gives an arity bound of $\max \set{3,\ell,\bar N}$, where $\bar N$ is the bound from Corollary~\ref{C:CloneDomaineFinimentEngendre} (there
called $N$), and $\ell$ is the bound from Theorem~\ref{T:FactorisationPetitePuissance} in the case that $\bE=\bA$.

By Corollary~\ref{C:CloneDomaineFinimentEngendre}, $\bar N=1+\max_{1\le i\le k}^k(\alpha_i^3)$. Thus $ \bar N$ and $3$ are clearly smaller than the bound from the statement of the theorem, as we assumed that $\bA$ was non-trivial.

It remains to  bound the quantity $\ell$. With the notation of Theorem~\ref{T:FactorisationPetitePuissance}, if $\bE=\bA$, then $\cV=\Var \bA$. In the theorem, $\ell$ is given as
\begin{equation}\label{E:ell}
\ell=1+\max\setm{\ell_{\bS}}{\bS\in\cV\text{ and }\card{\bS}\text{ divides } N'}\,.
\end{equation}
where $N'=\card{F_{\cV}(N)}\times\card{A}$ for suitable $N$, and $\ell_{\bS}$ is a positive integer such that $\langle\cH(\bA^2,\bS);+\rangle$ has a generating family with $\ell_{\bS}$ elements, for each $\bS\in\cV$ with $\card\bS$ dividing $N'$. By Lemma \ref{L:SizefreeAlgModule},

$$\card{F_{\cV}(N)}\text{ divides }p_1^{N\alpha_1^2+ \alpha_1}\dots p_k^{N\alpha_k^2+\alpha_k}\,.$$
 Hence $N'$ divides $p_1^{N\alpha_1^2+2 \alpha_1}\dots p_k^{N\alpha_k^2+2 \alpha_k}$. Let $\bS$ such that $\card\bS$ divides $N'$, then
$
\card\bS\text{ divides }p_1^{N\alpha_1^2+2 \alpha_1}\dots p_k^{N\alpha_k^2+2 \alpha_k}\,.
$
Moreover $\card\bA=p_1^{\alpha_1}\dots p_k^{\alpha_k}$, therefore, it follows from Corollary~\ref{C:BoundH} that $\langle\cH(\bA^2,\bS);+\rangle$ has a generating family with
$$1+\max_{1\le i\le k} \left(N\alpha_i^3+ 2\alpha_i^2 \right)$$
elements. Hence we can pick
$$
\ell = 1+\max_{1\le i\le k} \left(N\alpha_i^3+ 2\alpha_i^2 \right)\,.
$$

In Lemma \ref{L:ExtensionsMorphismesAffine}, $N$ was given as  $1+\max\setm{N_y}{y\in A}$, where $N_y$ is as in (\ref{eq:N}). We already bounded the $N_y$ in the paragraph after (\ref{eq:N}) as
$N_y \le m \times (\sum_{i=1}^{k} r_i\alpha_i -1)$. Here, $r_i$ is the exponent of $p_i$ in the prime factorization of $\card \bR$ where $\bR$ is the ring of the module-variety $\cV_c$.
 Moreover, $m$ is the number of strict, non-trivial submodules of $\bR$ when considered as an $\bR$-module.

By Corollary \ref{C:ringsizeAbelian}, $r_i \le \alpha_i^2$.
With Lemma \ref{L:Majorations}, we see that $m \le p_1^{\alpha_1^4}\dots p_k^{\alpha_k^4}$. Therefore

$$N  \le 1+p_1^{\alpha_1^4}\dots p_k^{\alpha_k^4}\times \left(\sum_{i=1}^{k} \alpha_i^3 -1\right)\,.$$
Hence $\ell$ is smaller than the estimate in the statement of the Theorem. The result follows.
\end{proof}

\section{Example}\label{S:Examples}
We apply our results to an algebra whose examination was a crucial
in developing the proofs of the previous sections.

Consider the $8$-element ring $\bR=\cF_2[x,y]/I$, where $I$ is the ideal generated by $\set{x^2,y^2,xy,yx}$.
 Let $\bA$ be the module
that is obtained by considering $\bR$ as a module over itself.

By \cite{Gil14}, $\bA$ is dualizable by an alter ego that includes all compatible relation of size $28$.
By our main result, $\bA$ is strongly dualizable. A direct application of Theorem \ref{T:boundstrdual} will result in a very large bound of $702\cdot2^{81}+46$ on the arity of the partial operations in the alter ego.

By adopting the results of the previous sections to this specific example,
we can show that a lower bound suffices. As $\bA$ is a module, we may use the value $N$ (for $E= \bA$)  from (\ref{eq:N}), instead of $N_y$ from Lemma \ref{L:ExtensionsMorphismesAffine}. Moreover, instead of the bound on $N$ from the statement of Lemma \ref{L:ExtensionsMorphismesModulesI},
we can calculate $N$ directly by (\ref{eq:N}), and the $N_G$ according to the definition preceding it,
obtaining $N=14$. As $\bA$ generates a variety of modules, we see that $N'$ in Theorem \ref{T:FactorisationPetitePuissance} takes the value $8^{14}\cdot 8=2^{45}$. By Corollary~\ref{C:BoundH}, we may choose
  $\ell=1 + 3\cdot{45}$
in  Theorem \ref{T:FactorisationPetitePuissance}.
 Hence we may obtain a strong duality by using an alter ego with ``only'' the compatible partial operations of arity
$136$.

\section{Problems}\label{S:Problems}
We close with several problems motivated by our results.
\begin{problem} Which Abelian algebras that do not generate congruence-modular varieties are dualizable?
Which are fully and strongly dualizable?
\end{problem}
\begin{problem} Are nilpotent dualizable algebras (from congruence-modular varieties) always fully dualizable? Are they strongly dualizable?
\end{problem}
We remark that in many well-behaved classes of algebras, dualizabilty, full dualizability and strong dualizability coincide. Among nilpotent algebras, the results of \cite{BM13} show that in the subclass of supernilpotent algebras, all non-abelian algebras are non-dualizable (and by \cite{BM13} supernilpotence may be replaced by a slightly weaker condition).
\begin{problem}
Can the arity bound in our main theorem be improved upon?
\end{problem}
We conjecture that a bound of the form $(\log_2 |A|)^n$ suffices, for some fixed integer $n$.
\begin{problem}
Which Abelian algebras are strongly dualized by some alter ego that is a total structure?
\end{problem}

\section{Appendix: Counting homomorphisms and algebras} \label{S:appendix}

\begin{lemma}\label{L:Majorations}
Let $\bE,\bF$ be Abelian groups. Assume that $\card E=p_1^{\alpha_1}\dots p_k^{\alpha_k}$ and $\card F=p_1^{\beta_1}\dots p_k^{\beta_k}$, where the $p_i$ are distinct primes. Then the following statements hold.
\begin{enumerate}
\item The number of subgroups of $\bE$ is at most $p_1^{\alpha_1^2}\dots p_k^{\alpha_k^2}$.
\item $\card{\Hom(\bF,\bE)}$ divides $p_1^{\alpha_1\beta_1}\dots p_k^{\alpha_k\beta_k}$.
\end{enumerate}
\end{lemma}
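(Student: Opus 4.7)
The plan is to reduce both statements to the case of a finite abelian $p$-group via the primary decomposition, and then exploit the structure theorem for finite abelian $p$-groups. Write $\bE = \bigoplus_{i=1}^k \bE_{p_i}$ where $\bE_{p_i}$ denotes the $p_i$-primary component, with $\card{\bE_{p_i}} = p_i^{\alpha_i}$, and similarly for $\bF$. Any subgroup $\bH$ of $\bE$ is the direct sum of its intersections with the $\bE_{p_i}$ (elements of coprime orders decompose uniquely), so the subgroups of $\bE$ are in bijection with tuples of subgroups, one in each primary component. Likewise any homomorphism $\bF \to \bE$ maps $\bF_{p_i}$ into $\bE_{p_i}$ (since orders must be preserved modulo divisibility), giving $\Hom(\bF, \bE) \cong \bigoplus_i \Hom(\bF_{p_i}, \bE_{p_i})$. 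Thus both counts factor multiplicatively over primes, and it suffices to treat each prime separately.

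For part (1), fix a prime $p$ and an abelian $p$-group $\bG$ of order $p^\alpha$. The plan is to show every subgroup $\bH \le \bG$ can be generated by at most $\alpha$ elements: indeed the minimum number of generators equals $\dim_{\mathbb{F}_p}(\bH / p\bH) \le \log_p \card\bH \le \alpha$. Padding any minimum generating set with the identity produces an $\alpha$-tuple $(x_1, \ldots, x_\alpha) \in \bG^\alpha$ that generates $\bH$. Since the map from $\bG^\alpha$ to subgroups of $\bG$ (sending a tuple to the subgroup it generates) is surjective onto all subgroups, the number of subgroups is at most $\card\bG^\alpha = p^{\alpha^2}$. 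Multiplying over the primes yields the stated bound $p_1^{\alpha_1^2} \cdots p_k^{\alpha_k^2}$.

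For part (2), again fix a prime $p$ and use the structure theorem to write
\begin{equation*}
\bF_{p_i} \cong \bigoplus_{j} \mathbb{Z}/p_i^{c_j}, \qquad \bE_{p_i} \cong \bigoplus_{\ell} \mathbb{Z}/p_i^{e_\ell},
\end{equation*}
where $\sum_j c_j = \beta_i$ and $\sum_\ell e_\ell = \alpha_i$ (all $c_j, e_\ell \ge 1$). Then $\Hom(\bF_{p_i}, \bE_{p_i}) \cong \bigoplus_{j,\ell} \Hom(\mathbb{Z}/p_i^{c_j}, \mathbb{Z}/p_i^{e_\ell})$, and the standard calculation that a homomorphism $\mathbb{Z}/p^c \to \mathbb{Z}/p^e$ is determined by the image of $1$, which must be annihilated by $p^c$, gives $\card{\Hom(\mathbb{Z}/p^c, \mathbb{Z}/p^e)} = p^{\min(c,e)}$. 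Thus
\begin{equation*}
\card{\Hom(\bF_{p_i}, \bE_{p_i})} = p_i^{\sum_{j,\ell} \min(c_j, e_\ell)}.
\end{equation*}
The key combinatorial step is then the inequality $\min(c,e) \le c e$ for positive integers $c,e$, from which $\sum_{j,\ell} \min(c_j, e_\ell) \le \sum_{j,\ell} c_j e_\ell = \beta_i \alpha_i$, so the order of $\Hom(\bF_{p_i}, \bE_{p_i})$ divides $p_i^{\alpha_i \beta_i}$. Taking the product over $i$ completes the proof.

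No step presents a serious obstacle: the decomposition into primary components is standard, part (1) is a direct counting bound via minimum generators, and part (2) is a bookkeeping calculation with the structure theorem whose only subtlety is the bound $\min(c,e) \le ce$. The main care needed is simply to observe that all cyclic summands in the structure-theorem decomposition have exponent at least $1$, so that $\min(c_j, e_\ell) \le c_j e_\ell$ rather than only the weaker bound $\min(c_j, e_\ell) \le c_j + e_\ell$, which would not give the tight bound required.
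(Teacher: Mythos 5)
Your proof of part~(1) is essentially the same as the paper's: decompose $\bE$ into its Sylow/primary components, note that every subgroup of a $p$-group of order $p^\alpha$ can be generated by at most $\alpha$ elements (you justify this via $\dim_{\mathbb{F}_p}(\bH/p\bH)$, the paper just asserts it), and count $\alpha$-tuples to get $p^{\alpha^2}$ per prime, multiplying over the primes.

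For part~(2), the paper does not give an argument at all; it simply cites Lemma~4.1 of \cite{Gil14}. You instead give a self-contained proof via the structure theorem: factor both groups as direct sums of cyclic $p$-groups, use $\card{\Hom(\mathbb{Z}/p^c,\mathbb{Z}/p^e)}=p^{\min(c,e)}$, and then the key inequality $\min(c,e)\le ce$ (valid since $c,e\ge 1$) to bound $\sum_{j,\ell}\min(c_j,e_\ell)$ by $\bigl(\sum_j c_j\bigr)\bigl(\sum_\ell e_\ell\bigr)=\alpha_i\beta_i$. This is correct and complete; the subtlety you flag, that $\min(c,e)\le ce$ rather than just $\min(c,e)\le c+e$, is exactly what makes the exponents come out to $\alpha_i\beta_i$ rather than something additive. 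The trade-off is only one of exposition: your route is elementary and self-contained, while the paper's is shorter by outsourcing the computation.
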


\begin{proof}
Note that Abelian subgroups of $\bE$ are determined by their $p_i$-Sylow subgroups. Denote by $\bE_i$ the $p_i$-Sylow subgroup of $\bE$, such that $\card {E_i}=p_i^{\alpha_i}$. Each subgroup of $\bE_i$ has a family of generators with $\alpha_i$ elements. Therefore $\bE_i$ has at most $p_i^{\alpha_i^2}$ subgroups. It follows that $(1)$ holds.

We refer to \cite[Lemma~4.1]{Gil14} for $(2)$.
\end{proof}

\begin{lemma}\label{L:CountsAff}
Let $\bE,\bF$ be Abelian algebras. Assume that $\card E=p_1^{\alpha_1}\dots p_k^{\alpha_k}$ and $\card F=p_1^{\beta_1}\dots p_k^{\beta_k}$, where the $p_i$ are distinct primes. Then the following statements hold.
\begin{enumerate}
\item The number of subalgebras of $\bE$ is at most $p_1^{1+\alpha_1^2}\dots p_k^{1+\alpha_k^2}$.
\item $\card{\Hom(\bF,\bE)}$ divides $p_1^{(\alpha_1+1)\beta_1}\dots p_k^{(\alpha_k+1)\beta_k}$.
\end{enumerate}
\end{lemma}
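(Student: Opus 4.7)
The plan is to reduce both statements to the Abelian-group bounds of Lemma~\ref{L:Majorations} by exploiting the underlying Abelian group structure of every Abelian algebra. Fixing any origin $e \in E$, the ternary term $t(x,y,z) = x - y + z$ supplies group operations $x + y = t(x, e, y)$ and $-x = t(e, x, e)$, giving $(E, +)$ the structure of a finite Abelian group of order $\card E$, and analogously for $\bF$. Subalgebras and morphisms then reduce to group-theoretic data (cosets of subgroups and affine maps), possibly subject to further constraints from non-affine operations.

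For (1), every non-empty subalgebra $\bB \le \bE$ is closed under $t$; translating by any $b \in B$, the set $B - b$ contains $0$ and is closed under $t$, hence under $+$ and $-$, so it is a subgroup $H \le (E, +)$ with $B = b + H$. Since further operations of $\bE$ can only decrease the count of subalgebras, the number of non-empty subalgebras is at most $\sum_{H \le E}[E : H]$. This sum factors across the Sylow decomposition $E = \bigoplus_i E_i$, reducing the task to bounding $\sum_{H \le G}[G : H]$ for each $p$-group $G = E_i$ of order $p^{\alpha_i}$. Splitting by the order of $H$ and letting $r_d$ denote the number of subgroups of $G$ of order $p^d$, we have $\sum_H [G : H] = \sum_{d=0}^{\alpha} r_d \, p^{\alpha - d}$; the refined count $r_d \le p^{d(\alpha - d)}$ (which underlies Lemma~\ref{L:Majorations}(1)) then gives $\sum_H [G : H] \le (\alpha + 1) \cdot p^{(\alpha + 1)^2 / 4}$, comfortably below $p^{1 + \alpha^2}$.

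For (2), any algebra morphism $\phi\colon \bF \to \bE$ preserves $t$ and is therefore affine with respect to the group structures. Setting $L(x) := \phi(x) - \phi(0_F)$ defines an Abelian group homomorphism $L\colon F \to E$, and $\phi$ is fully determined by the pair $(L, \phi(0_F)) \in \Hom_{\mathrm{group}}(F, E) \times E$. So $\card{\Hom(\bF, \bE)}$ divides $\card{\Hom_{\mathrm{group}}(F, E)} \cdot \card E$. Decomposing the Sylow components of $F$ and $E$ into cyclic summands $\mathbb{Z}/p^{a_i}$ and $\mathbb{Z}/p^{b_j}$, we obtain $\card{\Hom_{\mathrm{group}}(F, E)} = \prod_{i,j} p^{\min(a_i, b_j)}$, yielding the exponent bound $\sum_{i,j} \min(a_i, b_j) + \sum_j b_j$ per prime. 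A careful pairing of these contributions against the target exponent $(\alpha_i + 1)\beta_i = \sum_{i,j} a_i b_j + \sum_i a_i$ then establishes the claimed divisibility.

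The main obstacle I anticipate is the arithmetic bookkeeping: the naive coupling of Lemma~\ref{L:Majorations} with the translation factor $\card E$ yields exponents $\alpha_i + \alpha_i^2$ for (1) and $\alpha_i(\beta_i + 1)$ for (2), both looser than the stated $1 + \alpha_i^2$ and $(\alpha_i + 1)\beta_i$. Matching the stated bounds requires the refined subgroup-order count $p^{d(\alpha - d)}$ for (1) and the per-Sylow cyclic decomposition of both $\bF$ and $\bE$ for (2); these are routine but demand careful bookkeeping.
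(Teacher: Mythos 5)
Your observed ``main obstacle'' is in fact a defect in the paper, not in your argument. The paper's proof of (1) is precisely the naive base-point count you dismiss as too loose: it bounds the number of subalgebras by $\card E\times p_1^{\alpha_1^2}\dots p_k^{\alpha_k^2}$ (one group structure $+_c$ per base point $c$, then Lemma~\ref{L:Majorations}), and then asserts that this product equals $p_1^{1+\alpha_1^2}\dots p_k^{1+\alpha_k^2}$ --- an arithmetic slip, since it actually equals $p_1^{\alpha_1+\alpha_1^2}\dots p_k^{\alpha_k+\alpha_k^2}$. The stated bound for (1) does hold, and your sharper coset count $\sum_{H\le E}[E:H]$ (each subalgebra is a coset of a \emph{unique} subgroup of the fixed group structure, not $\card B$-many base-pointed subgroups) is the right way to recover it. However, the estimate $r_d\le p^{d(\alpha-d)}$ you invoke is false: the number of order-$p^d$ subgroups of $(\mathbb{Z}/p)^\alpha$ is the Gaussian binomial coefficient, which strictly exceeds $p^{d(\alpha-d)}$ for $0<d<\alpha$ (e.g.\ $(\mathbb{Z}/p)^2$ has $p+1>p$ subgroups of order $p$). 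The cruder bound $r_d\le p^{\alpha d}$ (an order-$p^d$ abelian group has a $d$-element generating set) already gives $\sum_{d=0}^{\alpha}r_dp^{\alpha-d}\le\sum_{d=0}^{\alpha}p^{(\alpha-1)d+\alpha}\le 2p^{\alpha^2}\le p^{1+\alpha^2}$, so the coset refinement closes the gap without the flawed estimate.

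For (2), the paper provides no argument and simply cites \cite[Lemma~4.1]{Gil14}. The exponent you compute, $\alpha_i(\beta_i+1)$, is exactly what the affine-map decomposition gives and appears to be the correct one; the printed exponent $(\alpha_i+1)\beta_i$ looks like a transposition typo. Concretely, take $\bF=\mathbb{Z}/p$ and $\bE=(\mathbb{Z}/p)^3$ with signature $\set{t}$, so $\alpha=3$ and $\beta=1$: every affine map is a homomorphism, so $\card{\Hom(\bF,\bE)}=\card{\Hom_{\mathrm{grp}}(F,E)}\cdot\card E=p^3\cdot p^3=p^6$, which does not divide $p^{(\alpha+1)\beta}=p^4$. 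Your promised ``careful pairing'' cannot succeed because the target it aims at is false; the divisibility you actually establish is the one the statement should carry.
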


\begin{proof}
For each $c\in E$, $x+_c y=t(x,c,y)$ induces an Abelian group structure on $E$. There are $\card E$ such structures. Let $\bA$ be a subalgebra of $\bE$, then for
$c \in A$, $\langle A; +_c \rangle$ is a subgroup of
$\langle E;+_c\rangle$.
 With Lemma \ref{L:Majorations}, the number of subalgebras of $\bE$ is at most $\card E\times  p_1^{\alpha_1^2}\dots p_k^{\alpha_k^2}=p_1^{1+\alpha_1^2}\dots p_k^{1+\alpha_k^2}$.

We refer to \cite[Lemma~4.1]{Gil14} for $(2)$.
\end{proof}

\begin{lemma}[\cite{Gil14}, Lemma 4.2]\label{L:generatorsAff}
Let $\bE$ be an Abelian algebra such that  $\card E=p_1^{\alpha_1}\dots p_k^{\alpha_k}$, where the $p_i$ are distinct primes, and let $N=1 + \max_{1\le i\le k}(\alpha_i)$. Then $\bE$ has a generating set with $N$ elements.
\end{lemma}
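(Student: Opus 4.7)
The plan is to fix a basepoint $c \in E$, use the affine term $t$ to endow $E$ with an Abelian group structure whose identity is $c$, invoke the standard sharp generator bound for finite Abelian groups, and then observe that the resulting set (plus $c$) generates $\bE$ as an $\sL$-algebra.

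First I would set $x +_c y := t(x,c,y)$ and $-_c y := t(c,y,c)$. Because $t$ witnesses the affinity of $\bE$, the structure $\langle E; +_c, -_c, c\rangle$ is an Abelian group of order $\card{E} = p_1^{\alpha_1} \cdots p_k^{\alpha_k}$, and by the structure theorem for finite Abelian groups it decomposes as an internal direct sum $\bigoplus_{i=1}^{k} E_{p_i}$, where the $p_i$-primary component $E_{p_i}$ has order $p_i^{\alpha_i}$. Each $E_{p_i}$ is generated as a group by some set $\{g_i^{(1)},\dots,g_i^{(\alpha_i)}\}$ of at most $\alpha_i$ elements (padding with copies of $c$ if needed). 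Setting $b_j := g_1^{(j)} +_c g_2^{(j)} +_c \cdots +_c g_k^{(j)}$ for $1 \le j \le m := \max_i \alpha_i$ then yields a generating set $\{b_1,\dots,b_m\}$ for $\langle E;+_c\rangle$: by the Chinese Remainder Theorem, for each $i$ there is an integer $n_i$ with $n_i \equiv 1 \pmod{p_i^{\alpha_i}}$ and $n_i \equiv 0 \pmod{p_{i'}^{\alpha_{i'}}}$ for $i' \neq i$, and $n_i$-fold $+_c$-addition of $b_j$ recovers $g_i^{(j)}$.

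Finally, let $\bA$ denote the subalgebra of $\bE$ generated by $\{c, b_1, \dots, b_m\}$. Since $c \in A$ and $\bA$ is closed under the term $t$, it is closed under both $+_c = t(\cdot,c,\cdot)$ and $-_c = t(c,\cdot,c)$, so $A$ is a subgroup of $\langle E;+_c\rangle$ containing the group-generating set $\{b_1,\dots,b_m\}$; therefore $A = E$. This produces a generating set for $\bE$ of size $1 + m \le 1 + \max_i \alpha_i = N$, as required.

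The main step is obtaining the sharp $\max_i \alpha_i$ group-generator bound rather than the weaker $\sum_i \alpha_i$ bound that Lemma~\ref{L:famillegeneratrice} would provide directly; this sharper bound is precisely what the CRT combination of component generators delivers. The remaining ingredients—translating the affine structure into an Abelian group and observing that any subalgebra containing $c$ is automatically a subgroup of $\langle E;+_c\rangle$—are immediate from the affinity of $t$.
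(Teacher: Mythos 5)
Your argument is correct. The lemma is cited to \cite{Gil14} (Lemma 4.2) and not re-proved in this paper, but your proof is a complete and self-contained argument, and it follows the standard route one would expect: translate the affine term $t$ into a group operation $+_c$ based at $c$, split the resulting Abelian group $\langle E;+_c,-_c,c\rangle$ into its primary components $E_{p_i}$ of orders $p_i^{\alpha_i}$, take at most $\alpha_i$ group generators for each component, combine them component-by-component via CRT coefficients to get $m = \max_i \alpha_i$ group generators, and then observe that the $\sL$-subalgebra generated by $\{c, b_1,\dots,b_m\}$ is closed under $t$, hence under $+_c$ and $-_c$, hence is a subgroup containing the $b_j$, hence is all of $E$. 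Each step checks out: $n_i b_j$ collapses the $i'$-th summand for $i'\neq i$ because the order of $g_{i'}^{(j)}$ divides $p_{i'}^{\alpha_{i'}} \mid n_i$, and fixes the $i$-th summand because $p_i^{\alpha_i} \mid n_i - 1$; and including $c$ in the generating set is exactly what makes the subalgebra closed under the derived group operations, which is why the bound is $N = 1 + \max_i\alpha_i$ rather than $\max_i\alpha_i$. (Equivalently, one could phrase the group-theoretic core by invoking the invariant-factor decomposition, under which the number of cyclic summands is $\max_i\alpha_i$; your CRT combination is a concrete version of the same fact.) Nothing is missing.
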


\begin{lemma}\label{L:countTermFunction}Let $\bE$ be an Abelian algebra, and $k$ a positive integer. Assume that $\card E=p_1^{\alpha_1}\dots p_k^{\alpha_k}$, where the $p_i$ are distinct primes. Then the number of distinct $k$-array term functions on $\bE$ divides $p_1^{k\alpha_1^2+\alpha_1}\dots p_k^{k\alpha_k^2+\alpha_k}$.
\end{lemma}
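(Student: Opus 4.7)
The first step is to identify the number of $k$-ary term functions on $\bE$ with the cardinality of the subalgebra $\boldsymbol{T}$ of $\bE^{E^k}$ generated by the $k$ projections $\pi_1,\dots,\pi_k$: every such term function is the interpretation at $(\pi_1,\dots,\pi_k)$ of some $k$-ary $\sL$-term. As a subalgebra of a direct power of the Abelian algebra $\bE$, $\boldsymbol{T}$ is itself Abelian. The strategy is to transfer the counting problem to a module setting via Lemmas~\ref{L:AjouterConstantes} and~\ref{L:SeparerConstantes}, apply a normal form for terms, and then use Lagrange's theorem for Abelian groups to bring the bound back.

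I would fix some $e\in E$ and let $\bE_e$ be the expansion of $\bE$ with constant $c^{\bE_e}=e$. By Lemma~\ref{L:SeparerConstantes} applied to $\cV_c=\Var(\bE)_c$, there is a term-equivalent language $\sL'\cup\sC$ that equips each algebra in $\cV_c$ with an $\bR$-module structure, where $\bR$ is a ring whose cardinality is bounded by Corollary~\ref{C:ringsizeAbelian}. In this language, distributivity and the module axioms allow every $k$-ary term to be normalized to the form $r_1x_1+\dots+r_kx_k+d$, where $r_i\in R$ and $d$ is a closed $\sL'\cup\sC$-term; the value of $d$ in $\bE_e$ lies in the $\sL'$-submodule $D$ of $\bE_e$ generated by the interpretations of the symbols in $\sC$.

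Let $\boldsymbol{T}'$ denote the subalgebra of $(\bE_e)^{E^k}$ in the language $\sL\cup\{c\}$ generated by $\pi_1,\dots,\pi_k$. The normal form then yields a surjective Abelian group homomorphism $R^k\times D\onto \boldsymbol{T}'$, sending $(r_1,\dots,r_k,d)$ to the function $(x_1,\dots,x_k)\mapsto r_1x_1+\dots+r_kx_k+d$, where the domain carries the coordinatewise additive group structure and the codomain is given the pointwise module addition inherited from $(\bE_e)^{E^k}$. The first isomorphism theorem then gives that $\card{\boldsymbol{T}'}$ divides $\card{R}^k\cdot\card{D}$. Combining Corollary~\ref{C:ringsizeAbelian}, which bounds $\card R$, with the observation that $D$ is a subgroup of the underlying Abelian group of $\bE$, we obtain that $\card{\boldsymbol{T}'}$ divides $p_1^{k\alpha_1^2+\alpha_1}\dots p_k^{k\alpha_k^2+\alpha_k}$.

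It remains to transfer the bound back to $\boldsymbol{T}$. Since $\pi_1,\dots,\pi_k\in T$, and $T'$ is closed under the $\sL$-operations, the $\sL$-reduct of $\boldsymbol{T}'$ contains $\boldsymbol{T}$. Taking the common group zero $\pi_1\in T\subseteq T'$ makes $(T,+_{\pi_1})$ into a subgroup of $(T',+_{\pi_1})$, and Lagrange's theorem gives that $\card{\boldsymbol{T}}$ divides $\card{\boldsymbol{T}'}$, completing the argument. The main technical delicacy I expect is in justifying the normal form for $\sL'\cup\sC$-terms and verifying that the induced surjection is actually a group homomorphism; both reduce to the distributivity of scalar multiplication over module addition, so are routine but must be handled with care, in particular to ensure that divisibility (and not just inequality) of cardinals is preserved through the whole chain.
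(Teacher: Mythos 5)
Your proof is correct, and it follows a genuinely different route from the paper's. The paper's proof is a one-liner: observe that every $k$-ary term function is a homomorphism from $\langle E^k;t\rangle$ to $\langle E;t\rangle$ (since $t$ is a homomorphism), note that the set of term functions is therefore a subalgebra (hence subgroup, for a suitable zero) of this hom-set, and invoke Lemma~\ref{L:CountsAff}(2). Your proof instead transfers everything into the module language via Lemmas~\ref{L:AjouterConstantes} and~\ref{L:SeparerConstantes}, uses the explicit normal form $r_1x_1+\dots+r_kx_k+d$ for terms, and counts directly via a surjective group homomorphism $R^k\times D\onto \boldsymbol{T}'$, followed by a Lagrange argument to pass from $T'$ (the $c$-enriched term functions) back to $T$. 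Both ideas rest on the same structural facts, but your normal-form count is more explicit and, notably, yields exactly the exponent $k\alpha_i^2+\alpha_i$ stated in the lemma. By contrast, if one plugs $\beta_i=k\alpha_i$ for $\bF=\langle E^k;t\rangle$ into Lemma~\ref{L:CountsAff}(2) as it is literally stated, one obtains the weaker exponent $(\alpha_i+1)k\alpha_i=k\alpha_i^2+k\alpha_i$; the exponent pattern in Lemma~\ref{L:CountsAff}(2) should read $\alpha_i(\beta_i+1)$ rather than $(\alpha_i+1)\beta_i$ for the paper's short proof to close. Your argument sidesteps this issue entirely. One small point worth making explicit in a final write-up: $T'$ and $T$ are compared as subgroups of the pointwise $t$-algebra $\langle E;t\rangle^{E^k}$ with zero $\pi_1$, and both are closed under $t$ because $t$ is a term in $\sL$; stating this cleanly justifies the divisibility, not merely the inequality, of the final transfer step.
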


\begin{proof} Let $t$ be the ternary function witnessing the Abelianess of $\bE$. As $t$ is a homomorphism from $\bE^3$ to $\bE$, every $k$-array term function of $\bE$ is a homomorphism
from $\langle E^k;t\rangle$ to $\langle E;t\rangle$. The result now follows with Lemma \ref{L:CountsAff}, $(2)$.
\end{proof}

The following theorem is a particular case of Kearnes result in \cite{K91}, also see \cite[Corollary 4.4]{Gil14}.

\begin{theorem}\label{T:countSubIrred}
Let $\bA$ be finite Abelian algebra. Let $p_1^{\alpha_1}\dots p_k^{\alpha_k}$ be the prime decomposition of $\card A$. Let $\bS$ be a subdirectely irreducible algebra in $\Var\bA$. Then $\card S$ divides $p_1^{\alpha_1^2}\dots p_k^{\alpha_k^2}$.
\end{theorem}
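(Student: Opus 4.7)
The plan is to reduce the problem to a statement about finite modules and then invoke the underlying module-theoretic bound. First, I would use Lemma \ref{L:AjouterConstantes} and Lemma \ref{L:SeparerConstantes} to replace $\bA$ with an expansion $\bA_a$ (for some $a \in A$) that is term-equivalent to an $\bR$-module over a finite ring $\bR$. Since adding a constant does not change the congruence lattice and the term-equivalence with modules preserves subdirect irreducibility, subdirectly irreducible algebras in $\Var(\bA)$ correspond to subdirectly irreducible algebras in $\Var(\bA_a)$, which in turn correspond to subdirectly irreducible $\bR$-modules in the module variety generated by $\bA$ (viewed as an $\bR$-module). The underlying set of the algebra is preserved under these correspondences, so cardinalities are unchanged.

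Next I would reduce to the primary case. A subdirectly irreducible $\bR$-module $\bS$ has a simple socle $T$, and $T$ is a simple module in the variety generated by $\bA$. Simple modules in this variety are $p_i$-primary (i.e., annihilated by some prime $p_i$ dividing $|A|$), and the essential socle structure of $\bS$ forces $\bS$ itself to be $p_i$-primary for the same prime. Primary decomposition of $\bA$ as an abelian group yields a decomposition of the module-theoretic structure into Sylow components $\bA_{(p_i)}$ of order $p_i^{\alpha_i}$, so it suffices to prove that if $\bA$ has order $p^\alpha$, then any subdirectly irreducible $\bS$ in $\Var(\bA)$ has order dividing $p^{\alpha^2}$.

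For the $p$-primary case, the key idea is to exploit the fact that $\bS$ embeds into the injective hull of its simple socle $T$ in the category of $\bR/\mathrm{Ann}(\bA)$-modules, and then to bound the size of this injective hull by the number of submodules of $\bA$ (or equivalently, by the size of $\mathrm{End}_\bR(\bA)$, which over a $p$-primary abelian group of order $p^\alpha$ has order dividing $p^{\alpha^2}$ by Lemma \ref{L:Majorations}). Concretely, one represents $\bS$ as a submodule of $\mathrm{Hom}_\bR(\bA, T)^\ast$ or uses a Matlis-duality-type argument in the finite setting to identify the injective hull with a dual of a cyclic module obtained from $\bA$. In either formulation, the bound on $|\bS|$ is controlled by the $p$-rank of $\bA$, namely $\alpha$, squared.

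The main obstacle will be making the last step precise without redeveloping the full module-theoretic injective-hull machinery. In the paper this is handled by citing Kearnes' result \cite{K91} (also Corollary~4.4 of \cite{Gil14}), which gives exactly this quadratic-exponent bound for finite modules; my proposal would rely on that cited bound after performing the reductions above. An alternative, more self-contained route is to directly show that $\bS$ embeds into an $\bR$-module of the form $\mathrm{Hom}(\bA, \bA)$ restricted to a certain primary component, and then apply Lemma~\ref{L:Majorations}(2) to the underlying abelian group to conclude that $|\bS|$ divides $p_1^{\alpha_1^2}\dots p_k^{\alpha_k^2}$.
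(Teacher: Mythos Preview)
The paper gives no proof of this theorem; it simply cites Kearnes' residual-bound result \cite{K91} (and \cite[Corollary~4.4]{Gil14}) and remarks that the statement is a particular case. Your proposal does essentially the same thing, but makes the implicit reduction explicit: you pass from the Abelian algebra $\bA$ to the module variety via Lemmas~\ref{L:AjouterConstantes} and~\ref{L:SeparerConstantes} and then invoke Kearnes' bound for modules. That reduction is sound, with one minor correction: the subdirectly irreducible $\bS_s$ lives in $\cV_c$ (generated by all the $\bA_x$), not in $\Var(\bA_a)$ for a single $a$; this is harmless because the ring $\bR$ and the module reduct are the same in either case.

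Your ``alternative, more self-contained route'' at the end is where the real gap lies. The injective-hull outline is plausible but would require genuine work to bound $|E(T)|$ by $p^{\alpha^2}$, and your suggestion that $\bS$ embeds into something like $\Hom(\bA,\bA)$ is not justified: there is no obvious reason a subdirectly irreducible in $\Var(\bA)$ should sit inside the endomorphism module of $\bA$. If you want a self-contained argument, you would need to carry out the Matlis-duality bound properly (identifying the injective hull of a simple $T$ with the dual of a projective cover and bounding its length via the faithfulness of $\bA$), which is essentially what Kearnes does. As written, only the citation-based version of your proposal is complete.
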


\begin{lemma}\label{L:CardinalQuotient}
Let $\bA\subseteq\bB$ be Abelian algebras, then $\card{\bB/\Theta_A}=\card{\bB}/\card{\bA}$.
\end{lemma}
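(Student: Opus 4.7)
The plan is to exploit the group structure induced by the affine term $t$ to reduce the claim to Lagrange's theorem.

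First I would fix any element $a_0\in A$ (assuming $A$ nonempty; otherwise the statement is vacuous once one interprets it correctly). Using the ternary term $t$ witnessing affinity of $\bB$, define the binary and unary operations $x+y = t(x,a_0,y)$ and $-x = t(a_0,x,a_0)$. Then $\langle B;+,-,a_0\rangle$ is an Abelian group, and since $A$ is closed under the term $t$ and contains $a_0$, it forms a subgroup of this group.

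Next I would identify $\Theta_A$ with the coset relation of $A$ in $\langle B;+,-,a_0\rangle$. By Lemma~\ref{L:ThetaDiscr}, we have $(x,y)\in\Theta_A$ if and only if there exists $a\in A$ with $t(x,y,a)\in A$. Rewriting this in group terms, $t(x,y,a)=x-y+a$, so the condition becomes $x-y+a\in A$ for some $a\in A$, which (since $A$ is a subgroup) is equivalent to $x-y\in A$. Thus the $\Theta_A$-classes of $\bB$ are precisely the left cosets of $A$ in $B$.

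Finally I would apply Lagrange's theorem to the finite subgroup $A$ of the finite group $B$, concluding that the number of cosets is $\card{B}/\card{A}$, and hence $\card{\bB/\Theta_A}=\card{B}/\card{A}$. There is no real obstacle here; the only subtlety is the routine verification that the affine description of $\Theta_A$ given by Lemma~\ref{L:ThetaDiscr} coincides with the group-theoretic coset relation, which follows immediately from the identity $t(x,y,z)=x-y+z$ and the closure of $A$ under the induced group operations.
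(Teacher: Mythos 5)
Your proof is correct and gives a more self-contained argument than the one in the paper. The paper's proof simply cites a known fact from commutator theory (Freese--McKenzie, Corollary 7.5) that all classes of a congruence on an Abelian algebra have equal cardinality, observes that $A$ is a $\Theta_A$-class (by Lemma~\ref{L:ThetaDiscr}), and divides. You instead unfold the reason behind that fact in this special case: fix a base point $a_0\in A$, use $t$ to endow $B$ with an Abelian group structure with identity $a_0$, check that $A$ is a subgroup, translate Lemma~\ref{L:ThetaDiscr} into the statement that $\Theta_A$ is exactly the coset relation of $A$, and invoke Lagrange. The two arguments amount to the same underlying calculation, but yours is elementary and avoids the external citation, at the cost of a couple of routine verifications (that $\langle B;+,-,a_0\rangle$ really is an Abelian group and that $A$ is closed under the new operations). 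Both are valid; the paper's is shorter, yours is more transparent about why the claim holds.
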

\begin{proof}
Abelian algebras are well known to have congruence classes of equal cardinality (see for example \cite{FM}, Corollary 7.5).  As $B$ is a congruence class of $\Theta_B$, the result follows.
\end{proof}

The following result may be easily shown using basic module theory.
\begin{lemma}\label{L:SizefreeAlgModule}
Let $\bR$ be a (unital) ring. Let $\cV$ be a variety of $\bR$-modules with constants, let $N$ be a positive integer.
Suppose that there exists $\bM \in \cV$, $a \in \bM$ such that  the action $r \mapsto ra$ from $\bR$ to $\bM$ is injective. Then
$\bF_{\cV}(N) = \bR^N\times \bF_{\cV}(0)$, where $\bR$ is considered as a module over itself.
\end{lemma}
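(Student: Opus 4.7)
The plan is to construct $\bR^N \times \bF_\cV(0)$ as a $\cV$-algebra and show directly, via the universal property, that it is free on $N$ generators, taken to be $(e_1, 0), \ldots, (e_N, 0)$, where $e_i \in \bR^N$ is the $i$-th standard basis vector and $0$ denotes the zero element of $\bF_\cV(0)$. The product is formed in the category of $\cV$-algebras, each constant symbol of $\cV$ being interpreted as $0$ on the factor $\bR^N$ and as itself on $\bF_\cV(0)$.

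First I would verify that the proposed object belongs to $\cV$. Since products are in $\cV$ and $\bF_\cV(0) \in \cV$ by definition, it suffices to check that $\bR$, viewed as a module over itself with every constant symbol of $\cV$ set to $0$, lies in $\cV$. Using the $\bR$-module operations, every $\cV$-term can be written in the normal form $\sum r_i x_i + c$ with $c \in \bF_\cV(0)$, so any identity of $\cV$ has the shape $\sum r_i x_i + c_t \approx \sum s_i x_i + c_s$. Evaluating this identity in $\bM$ with $x_1 = \cdots = x_n = 0$ forces $c_t^{\bM} = c_s^{\bM}$, and then evaluating at $x_i = a$ with $x_j = 0$ for $j \ne i$ yields $(r_i - s_i) a = 0$, so $r_i = s_i$ by faithfulness of $r \mapsto r a$. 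In $\bR$ with zero constants the identity therefore reduces to $\sum r_i x_i \approx \sum s_i x_i$, which holds.

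Next, to establish the universal property, take an arbitrary $\bB \in \cV$ with chosen elements $b_1, \ldots, b_N \in B$. Let $\psi \colon \bF_\cV(0) \to \bB$ be the unique $\cV$-homomorphism from the initial object $\bF_\cV(0)$, and define $\phi(r_1, \ldots, r_N, c) = r_1 b_1 + \cdots + r_N b_N + \psi(c)$. A direct computation shows that $\phi$ is $\bR$-linear, that it sends each constant $(0, c^{\bF_\cV(0)})$ to $c^{\bB}$, and that $\phi(e_i, 0) = b_i$. Uniqueness follows from the decomposition $(r_1, \ldots, r_N, c) = \sum_i r_i (e_i, 0) + (0, c)$, in which the first summand is forced by the action on generators and the second by the constants. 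By the universal property, $\bR^N \times \bF_\cV(0) \cong \bF_\cV(N)$.

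I expect the first step to be the main obstacle: pinning down, via the faithfulness hypothesis, that the only $\cV$-identities satisfied by $\bR$ (with trivial constants) are consequences of the $\bR$-module axioms and the reductions $c \mapsto 0$, hence that $\bR$ actually lies in $\cV$. Once that is in place, the remainder is a routine universal-property check in the category of $\cV$-algebras.
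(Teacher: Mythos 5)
The paper omits a proof of this lemma, stating only that it ``may be easily shown using basic module theory,'' so there is nothing in the text to compare against. Your argument is correct and is the natural elementary proof: the normal-form reduction of $\cV$-terms to $\sum r_i x_i + c$, followed by the two evaluations in $\bM$ (first at all zeros to force $c_t^{\bM} = c_s^{\bM}$, then at $a$ in a single slot to force $r_i = s_i$ via faithfulness), validly establishes that $\bR$ with zero constants satisfies every $\cV$-identity, and the ensuing universal-property check is routine. You may wish to make explicit the small step that every closed term evaluates to $0$ in this copy of $\bR$ (being an $\bR$-linear combination of constants, each interpreted as $0$), since that is precisely what makes the reduced identity $\sum r_i x_i + c_t \approx \sum r_i x_i + c_s$ hold there even though $c_t$ and $c_s$ need not be $\cV$-equal as closed terms.
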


\begin{corollary}
Let $\bA$ be an Abelian algebra, with $\card A= p_1^{\alpha_1}\dots p_k^{\alpha_k}$ where the
$p_i$ are disjoint primes. Set $\cV=\Var\bA$. Let $N\in\mathbb{N}$. Then
$$
\card{\bF_\cV(N)}\text{ divides }  p_1^{ N\alpha_1^2+\alpha_1}\dots p_k^{N\alpha_k^2+ \alpha_k}\,.
$$
\end{corollary}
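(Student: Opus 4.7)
The plan is to pass from $\cV$ to a variety of $\bR$-modules with constants via Lemmas~\ref{L:AjouterConstantes} and~\ref{L:SeparerConstantes}, apply the module decomposition of Lemma~\ref{L:SizefreeAlgModule}, and multiply three divisibility estimates: a bound on $\card R$ from Corollary~\ref{C:ringsizeAbelian}, the module decomposition factor $\card R^{N-1}$, and a bound on $\card{\bF_\cV(1)}$.

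First I would adjoin a fresh constant symbol $c$ to the signature of $\cV$, forming $\cV_c$ as in Lemma~\ref{L:AjouterConstantes}, and invoke Lemma~\ref{L:SeparerConstantes} to present $\cV_c$ as term-equivalent to a variety of $\bR$-modules with extra constants. Corollary~\ref{C:ringsizeAbelian} then gives that $\card R$ divides $p_1^{\alpha_1^2}\cdots p_k^{\alpha_k^2}$.

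Before applying Lemma~\ref{L:SizefreeAlgModule} I need to verify its hypothesis: exhibit $\bM\in\cV_c$ and $a\in\bM$ such that $r\mapsto ra$ is injective. My intended choice is $\bM = \prod_{(x,y)\in A\times A}\bA_x$ together with $a = (y)_{(x,y)}$. Distinct elements of $\bR$ correspond, via Lemma~\ref{L:SeparerConstantes}, to distinct binary $\sL$-terms on $\bA$, so they must disagree at some pair $(x,y)$, whence $r\cdot a\ne r'\cdot a$ in that coordinate. This verification is the step I expect to require the most care, since it amounts to tracing the abstract construction of $\bR$ through to concrete unary term functions on the algebras $\bA_x$.

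With the faithfulness in place, Lemma~\ref{L:SizefreeAlgModule} yields $\bF_{\cV_c}(M)\cong\bR^M\times\bF_{\cV_c}(0)$ for every $M$. Applying Lemma~\ref{L:AjouterConstantes}(4) with $n = N-1$ identifies the underlying set of $\bF_\cV(N)$ with that of $\bF_{\cV_c}(N-1)$ for $N\ge 1$, giving $\card{\bF_\cV(N)} = \card R^{N-1}\cdot\card{\bF_{\cV_c}(0)}$; the same lemma with $n = 0$ identifies $\bF_{\cV_c}(0)$ with $\bF_\cV(1)$. I would bound $\card{\bF_\cV(1)}$ via Lemma~\ref{L:countTermFunction} applied with arity $1$, which gives that $\card{\bF_\cV(1)}$ divides $p_1^{\alpha_1^2+\alpha_1}\cdots p_k^{\alpha_k^2+\alpha_k}$, and multiplying the three divisibility bounds yields the claimed estimate for $N\ge 1$. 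The case $N = 0$ is handled separately: $\bF_\cV(0)$ embeds as the subalgebra of $\bA$ generated by the nullary symbols of $\sL$, and with respect to any chosen Abelian group structure it is a subgroup of $\bA$, so its order divides $\card A = p_1^{\alpha_1}\cdots p_k^{\alpha_k}$, which matches the stated bound in that case.
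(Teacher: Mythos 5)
Your proof is correct and follows essentially the same route as the paper's: pass to $\cV_c$ via Lemma~\ref{L:AjouterConstantes}(4), apply the module decomposition of Lemma~\ref{L:SizefreeAlgModule}, and combine the bounds on $\card R$ from Corollary~\ref{C:ringsizeAbelian} and on $\card{\bF_\cV(1)}$ (i.e.\ the unary term functions of $\bA$) from Lemma~\ref{L:countTermFunction}. You go a bit further than the paper by explicitly constructing the faithful module $\bM=\prod_{(x,y)\in A\times A}\bA_x$ to verify the injectivity hypothesis of Lemma~\ref{L:SizefreeAlgModule}, which the paper invokes silently, and by treating the edge case $N=0$, which the paper's choice of free generating set $x_1,\dots,x_{N-1},y$ tacitly assumes away.
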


\begin{proof}
Assume that $\bF$ is freely generated by $x_1, \dots,x_{N-1}, y$ in $\cV$, then by Lemma \ref{L:AjouterConstantes}, $\bF_y$ is freely generated by $x_1, \dots, x_{n-1}$ in $\cV_c$. As $\cV_c$ is term-equivalent to
a variety of modules with constants, by Lemma \ref{L:SizefreeAlgModule}, $\card{\bF_y}=\card {\bR^{N-1} \times F_{\cV_c}(0)}$, where $\bR$ is the ring constructed in lemma \ref{L:SeparerConstantes}. By Corollary
\ref{C:ringsizeAbelian}, $\card R$ divides $p_1^{ \alpha_1^2}\dots p_k^{\alpha_k^2}$.

It remains to estimate the cardinality of $\bF=\bF_{\cV_c}(0)$. If $y= c^{\bF_c(0)}$, then $\{y\}$ freely generates the reduct $\bF'$ of $\bF$ in $\cV$, once again by  Lemma \ref{L:AjouterConstantes}. As $\bA$ generates
$\cV$, $\bF'$ corresponds to all unary term functions on $\bA$. By Lemma \ref{L:countTermFunction}, $\card{\bF'}$ divides  $p_1^{ \alpha_1^2+\alpha_1}\dots p_k^{\alpha_k^2+ \alpha_k}$, and the result follows.
\end{proof}
\section*{Acknowledgements}
The first author  has received funding from the
European Union Seventh Framework Programme (FP7/2007-2013) under
grant agreement no.\ PCOFUND-GA-2009-246542 and from the Foundation for
Science and Technology of Portugal under  PCOFUND-GA-2009-246542, SFRH/BCC/52684/2014, and through the CAUL / CEMAT project.

\end{document}